\documentclass[12pt,reqno]{amsart}
\usepackage[utf8]{inputenc}
\usepackage{graphicx}
\usepackage{xcolor}
\usepackage{hyperref}
\usepackage{bbm}
\usepackage{amssymb}
\usepackage{array}
\usepackage{fullpage}
\newcommand{\cC}{\mathcal{C}}
\newcommand{\cD}{\mathcal{D}}
\newcommand{\cZ}{Z}
\newcommand{\ZZ}{\mathbbm{Z}}
\newcommand{\CC}{\mathbbm{C}}

\DeclareMathOperator{\Hom}{Hom}
\DeclareMathOperator{\FPdim}{FPdim}
\DeclareMathOperator{\Irr}{Irr}
\DeclareMathOperator{\Aut}{Aut}
\DeclareMathOperator{\End}{End}
\DeclareMathOperator{\Fun}{Fun}
\DeclareMathOperator{\Fix}{Fix}

\DeclareMathOperator{\Vect}{Vec}
\DeclareMathOperator{\Tr}{Tr}
\DeclareMathOperator{\ev}{ev}
\DeclareMathOperator{\id}{id}
\DeclareMathOperator{\Dim}{Dim}
\DeclareMathOperator{\dimCC}{dim_{\CC}}
\renewcommand{\dim}{\Dim}

\newcommand{\tu}{\mathbbm{1}}
\newcommand{\br}{\mathrm{br}}
\newtheorem{thm}{Theorem}[section]
\newtheorem{prop}[thm]{Proposition}
\newtheorem{cor}[thm]{Corollary}
\theoremstyle{definition}
\newtheorem{defn}[thm]{Definition}
\newtheorem{lem}[thm]{Lemma}
\newtheorem{rem}[thm]{Remark}
\newcommand{\rev}[1]{{#1}^\mathrm{rev}}
\newcommand{\slot}{\,\cdot\,}

\usepackage[alphabetic,initials]{amsrefs}

\usepackage{graphicx,tikz}
\usetikzlibrary{arrows,backgrounds,scopes}
\usetikzlibrary{calc,cd}
\newcommand{\tikzmath}[2][0.42]
{\vcenter{\hbox{\begin{tikzpicture}[scale=#1] #2\end{tikzpicture}}}
}
\tikzset{coupon/.style={rectangle,rounded corners=1.5pt,draw,fill=white,inner sep=1.5,minimum size=12pt}}
\newcommand{\mydotw}[1]{\begin{scope}[shift={#1}] \fill[shift only,white] (0,0) circle (1.5pt); \draw[shift only,thick]  (0,0) circle (1.5pt);   \end{scope}}

\title{Computing fusion rules for spherical $G$-extensions of fusion categories }
\author{Marcel Bischoff}
\address{Department of Mathematics, Ohio University,
Athens, OH 45701, USA}
\email{bischoff@ohio.edu}
\email{marcel@localconformal.net}
\thanks{M.B. is supported by NSF Grant DMS-1700192/1821162}
\author{Corey Jones}
\address{{Department of Mathematics, The Ohio State University, 
Columbus, OH 43210, USA}}
\email{jones.6457@osu.edu}
\thanks{C.J. is supported by NSF Grant DMS-1901082}
\date{\today}
\begin{document}
\begin{abstract}
A $G$-graded extension of a fusion category $\mathcal{C}$ yields a categorical action of $G$ on the center $Z(\mathcal C)$. If the extension admits a spherical structure, we provide a method for recovering its fusion rules in terms of the action. We then apply this to find closed formulas for the fusion rules of extensions of some group theoretical categories and of cyclic permutation crossed extensions of modular categories.
\end{abstract}
\maketitle
\tableofcontents

\section{Introduction}

The theory of fusion categories has found significant applications in the study of two dimensional quantum physics, most notably in conformal field theory \cite{MoSe1990, FuRuSc2002, BiKaLoRe2014-2, HuLep2013} and topological phases of matter \cite{Kit06, NSSFD2008, Wang2010}. In both these contexts modular tensor categories appears as important invariants of physical models. If the model has a group $G$ of global symmetries, one obtains a $G$-crossed braided fusion category which is a $G$-extension of the original invariant \cite{Ki2002, Mg2005, BaBoChWa2014}. This makes understanding of $G$-extensions of fusion categories of fundamental importance for the study of two dimensional symmetry enriched physical systems.

The theory developed by Etingof, Nikshych, and Ostrik \cite{EtNiOs2010} provides the basic tools to construct and classify $G$-extensions of fusion categories. They show that every extension of $\mathcal{C}$ can be constructed from a categorical action, i.e. a monoidal functor $M:\underline{G}\rightarrow \underline{\operatorname{Aut}}^{\mathrm{br}}_{\otimes}(Z(\mathcal{C}))$, provided a certain cohomology class $o_{4}(M)\in H^{4}(G, \CC^{\times})$ is trivial. In the case this obstruction vanishes, the possible extensions associated with this action form a torsor over the group $H^{3}(G, \CC^{\times})$, but all the extensions have the same fusion rules. This tells us that in principle the fusion rules of the extension can be computed from the initial categorical action $M$. However in practice this problem is usually difficult. Naively following the proofs of the above statements from \cite{EtNiOs2010} requires the computation of the data for the associated bimodule categories, their relative Deligne products, and the bimodule functors used to define the monoidal product on the extension (see Section \ref{extension theory}). The amount of computation required to find the data in these intermediate steps quickly becomes infeasible as the rank of the fusion category grows.

In this paper, we provide a method for computing the fusion rules of an extension in an elementary way from a detailed knowledge of $M$ and $Z(\mathcal{C})$. Our approach bypasses the computation of the associated bimodule categories and their data. It allows for the derivation of closed form expressions of fusion rules for families of extensions in some general cases. The key observation in our approach is that the fusion rules can be recovered from the composition and convolution operations on the space of endomorphisms of the canonical Lagrangian algebra $I(\tu)\in Z(\cC)$ (see Corollary \ref{recovfusionrulesthm}). This may be viewed as a direct generalization of character theory for the representation category of a finite group .

We now give an outline of how this works. It is well known (for spherical fusion categories) that $\operatorname{End}(I(\tu))\cong K_{0}(\cC)\otimes_{\mathbbm{Z}} \mathbbm{C}$ as associative complex algebras. However, given $\operatorname{End}(I(\tu))$ as an abstract algebra, to find the fusion rules we need more information. We also need to identify the canonical basis elements $\{[X]\}_{X\in \text{Irr}(\cC)}$ (or perhaps some scaled version of them) so that we can recover the fusion rules by examining the coefficients under multiplication. 

Luckily there is an additional operation on $\operatorname{End}(I(\tu))$ that allows us to recover the (appropriately scaled) canonical basis in a canonical way. For any commutative Frobenius algebra $A$ in a braided fusion category, there are two associative binary operations on the vector space $\operatorname{End}(A)$. The first is the usual composition of morphisms, which in general is noncommutative. The second is the \textit{convolution operation} $\ast$ (see equation \ref{defconvolutionproduct}, \cite{Bi2016, BiDa2018}). This operation makes the vector space $\operatorname{End}(A)$ into a commutative algebra in the usual sense. If $A=I(\tu)\in Z(\cC)$ is the canonical Lagrangian algebra, we show the minimal idempotents $e_{X}$ with respect to $\ast$ are in bijective correspondence with equivalence classes simple objects $X\in \text{Irr}(\cC)$ (see equation \ref{eYdefinition}). Since the algebra $(\operatorname{End}(I(\tu), \ast)$ is commutative and semi-simple, the minimal idempotents give a canonical basis for the space $\operatorname{End}(I(\tu))$. We then show that $e_{X}\circ e_{Y}=\sum_{Z\in \text{Irr}(\cC)}\frac{d_{X}d_{Y}}{d_{Z}} N^{Z}_{XY} e_{Z}$ (where $d_{X}$ indicates (any) spherical dimension function, see Proposition \ref{fusioncomposition}). Thus while the basis $\{e_{X}\}_{X\in \text{Irr}(\cC)}$ is not quite \textit{the} canonical basis $\{[X]\}_{X\in \Irr(\cC)}$ for the fusion ring described above, the quantity $\frac{d_{X}d_{Y}}{d_{Z}}$ is independent of the spherical structure, and thus we can recover the fusion rules $N^{Z}_{XY}$ by examining the numbers $C^{Z}_{XY}$ defined by $(e_{X}\circ e_{Y})\ast e_{Z}=C^{Z}_{XY}e_{Z}$, and renormalizing (see equation \ref{fusioncoeff}).

Therefore, for a $G$-extension $\cC\subseteq \cD$, the fusion rules of $\cD$ can be determined by computing the composition and convolution products on the endomorphisms of the canonical Lagrangian algebra in $\cZ(\cD)$. By \cite{GeNaNi2009}, the latter category is equivalent to the equivariantization of the $G$-crossed braided relative center $\cZ_{\cC}(\cD)$. Here the $G$ action restricts to $M$ on the trivially graded component $Z(\cC)\subseteq Z_{\cC}(\cD)$. Furthermore, the canonical Lagrangian algebra for $\cD$ lives in the subcategory $Z(\cC)^{G}\subseteq Z(\cD)$. In particular, if $I_{G}:Z(\cC)\rightarrow Z(\cC)^{G}$ is the adjoint of the forgetful functor $F_{G}:Z(\cC)^{G}\rightarrow Z(\cC)$, then $I_{\cD}(\tu)\cong I_{G}(I_{\cC}(\tu))\in Z(\cC)^{G}\subseteq Z(\cD)$. Using the adjunction between $I_{G}$ and $F_{G}$, we can compute the triple $(\operatorname{End}(I_{\cD}(\mathbbm{1}),\ast,\circ)$ terms of the data of $\cZ(\cC)$ and the category action $M$. (Section \ref{Gexts}). We can then recover the fusion rules as described above.

A subtlety is that for the numbers we produce to actually be the fusion rules of the extension, we need to assume that $\cD$ admits a spherical structure, though we do not need to explicitly choose one (see Remark \ref{Nospherical}). Unfortunately, the extension theory of \cite{EtNiOs2010} has not been developed to take spherical structures into account, hence it is not clear a-priori if the $G$-extensions constructed from a given categorical action admit spherical structures. However, if we make the mild assumption that $\cC$ is \textit{pseudo-unitary}, then any extension is automatically so and hence our hypothesis is satisfied (see Proposition \ref{pseudounitaryext}). In this case our results apply to any $G$-extension, without additional hypothesis.

As a first application, we utilize our method to give general formulas for fusion rules of $G$-extensions of $\operatorname{Vec}(\hat{A}\times A, q)_{L}$ where $A$ is an abelian group, $q$ is the canonical hyperbolic quadratic form, and $L\le \hat{A}\times A$ is a Lagrangian subgroup. The extensions depend on an initial braided categorical action on $\operatorname{Vec}(\hat{A}\times A, q)\cong Z(\operatorname{Vec}(\hat{A}\times A, q)_{L})$ (see \textbf{Theorem \ref{pointedcase}}). Here $\operatorname{Vec}(\hat{A}\times A, q)_{L}$ denotes the fusion category of modules of the group algebra object associated to the Lagrangian subgroup. Note that the categories $\operatorname{Vec}(\hat{A}\times A, q)_{L}$ are precisely those which are Morita equivalent to $\operatorname{Vec}(A)$.

We then focus on the case when $\cC$ is modular, and the categorical action can be factored $\underline{G}\rightarrow \underline{\operatorname{Aut}}^{\mathrm{br}}_{\otimes}(\cC)\rightarrow \underline{\operatorname{Aut}}^{\mathrm{br}}_{\otimes}(\cZ(\cC))$, where the second functor acts on the right factor in $\cZ(\cC)\cong \cC\boxtimes \rev{\cC}$. If a corresponding extension exists, it has the additional structure of a \textit{$G$-crossed braided} extension of $\cC$. 
These are the extensions which naturally appear both in conformal field theory \cite{Mg2005} and topological phases of matter \cite{BaBoChWa2014}, hence are of the greatest interest in applications. 
In this case, the nice form of the Lagrangian algebra and of the action allows us to describe the convolution product in a general way. 

The examples of this type we consider are permutation actions on $\cC^{\boxtimes n}$. 
These have long been of interest to physicists in the context of rational conformal field theory \cites{BoHaSch,Ban2002,LoXu2004,KaLoXu2005,Mg2005} as an intermediate step in the study of permutation orbifold theories. More recently, permutation extensions have been of interest in the theory of topological phases under the guise of ``genons'' for their potential in quantum computing applications \cites{BaJiQi2013,BaBoChWa2014}.

Permutation crossed extensions have also come to attract the attention of mathematicians. They have been studied from the point of view of modular functors \cite{BarSchweig}. From an algebraic viewpoint, the $o_{4}$ obstruction for permutation actions was shown to vanish in \cite{GaJo2018}, hence these extensions always exist. 
They have been studied in the $\ZZ/2\ZZ$ case (\cites{BarSchweig,BaFRS}, \cites{EdJoPl2018,Pa2018}). 
Very recently, Delaney has given an algorithm for computing the fusion rules of general permutation extensions using the concept of bare defects \cite{D2019}. Here we will use our method to give a closed formula for the fusion rules in the case of \textit{maximal cyclic} permutation extensions (see \textbf{Theorem \ref{fusioncyclic}}).
Our formulas for the fusion rules involve the dimensions of vectors spaces assigned by the modular functor derived from $\cC$ to surfaces with field insertions. While our approach for fusion rules is different from \cite{D2019}, we have verified that their algorithm produces the same numbers as our formula in several examples.

The outline of the paper is as follows. The preliminary section briefly collects some facts about fusion categories, modular categories, equivariantizations, and extension theory that will be used in the paper. In Section \ref{recovfusionrules}, we demonstrate how to reconstruct the based fusion ring from the canonical Lagrangian algebra in $\cZ(\cC)$ and apply this to $G$-extensions as described above. Finally, we turn to the case of $G$-crossed extensions of modular categories, giving explicit examples of the computation of fusion rules for $G$-extensions from a given categorical action. We an include an appendix with a list of fusion rules for the $\ZZ/4\ZZ$ cyclic permutation extension of the modular category $\text{Fib}^{\boxtimes 4}$

\subsection{Acknowledgments}
The authors would like to thank Colleen Delaney, Cain Edie-Michell, Dave Penneys and Julia Plavnik for very useful discussions and comments on an early draft. We also thank Colleen Delaney for sharing an early draft of \cite{D2019} with us and for coordinating arXiv submissions. Marcel Bischoff was supported by NSF grant DMS-1700192/1821162. Corey Jones was supported by NSF Grant DMS-1901082.

\section{Preliminaries}

Recall a \textit{fusion category} is a $\mathbbm{k}$-linear, finitely semi-simple, rigid, monoidal category with simple unit \cite[Chapter 4]{EtGeNiOs2015}. In this paper, we assume $\mathbbm{k}=\mathbbm{C}$. The semi-simplicity gives us well behaved \textit{fusion rules}, described by the non-negative integers $N^{Z}_{XY}=\dim(\cC(X\otimes Y, Z))$ for $X,Y,Z\in \Irr(\cC)$. Here and for the rest of the paper, we use $\Irr(\cC)$ to denote a fixed choice of representative for each equivalence classes of simple object in $\cC$. If $\cC$ is any category, we use here and throughout the paper the notation $\cC(X,Y):=\operatorname{Hom}(X,Y)$. We typically use $f\circ g$ to represent composition of morphisms.

For fusion categories, there are several notions of dimension that are important to consider. First, there is a unique function $\FPdim\colon\Irr(\cC)\rightarrow \mathbbm{R}^{+}$ such that
$\FPdim(\tu)=1$ and $\FPdim(X)\FPdim(Y)=\sum_{Z\in \Irr(\cC)}N^{Z}_{XY}\FPdim(Z)$ called the \textit{Frobenius-Perron dimension}, \cite[Section 3.3]{EtGeNiOs2015}. This dimension depends only on the based ring $K_{0}(\cC)$, and is insensitive to the details of the categorification.


The second notion of dimension depends on a choice of \textit{spherical structure}. This is a monoidal natural isomorphism from the identity to the double dual functor $X\mapsto \bar{\bar{X}}$ such that the associated left and right pivotal traces are equal \cite[Chapter 4.7]{EtGeNiOs2015}. A spherical structure gives us a single, well-defined spherical trace for every object $X\in \cC$, $\Tr_{X}\colon \cC(X,X) \rightarrow \mathbbm{C}$. We can then define the spherical dimension function $d\colon \Irr(\cC)\rightarrow \mathbbm{R}_{\neq 0}$, $d_{X}:=\Tr_{X}(1_{X})$ which satisfies $d_{X}d_{Y}=\sum_{Z\in \Irr(\cC)}N^{Z}_{XY}d_{Z}$. Spherical structures also allow us to make use of the spherical graphical calculus, which we use freely \cites{BW,Tu1994,BaKi2001,Se2011}. 
It is an open question whether every fusion category admits a spherical structure \cite{EtNiOs2005}.

There is a third important notion of dimension in fusion categories. Let $X$ be a simple object in a fusion category, and let $\bar{X}$ be a (two-sided) dual object. Choose arbitrary evaluation and coevaluation morphisms $R_{X}\in \cC(\tu, \bar{X}\otimes X), \bar{R}_{X}\in \cC(\tu, X\otimes \bar{X}), R^{*}_{X}\in \cC( \bar{X}\otimes X,\tu), \bar{R}^{*}_{X}\in \cC( \tu, X\otimes \bar{X}) $ satisfying the duality equations. Then the quantity

\begin{equation}\label{paireddimension}
d_{\{X,\bar{X}\}}1_{\tu}:=(R^{*}_{X}\circ R_{X})(\bar{R}^{*}_{X}\circ \bar{R}_{X})
\end{equation}

\noindent is called the \textit{paired dimension} and does not depend on the choices of $\bar{X}$ or evaluation and coevaluation morphisms. This number is strictly positive \cite{Ba16}, and thus we can define the \textit{fusion dimension} of $X$ as the positive square root
$$
    d^{+}_{X}:=\sqrt{d_{\{X,\bar{X}\}}}\,.
$$
We note that we are using different notation from \cite{Ba16} for the various dimensions. In particular we use $d^{+}_{X}$ for the fusion dimension instead of for the Frobenius-Perron dimension. Like the Frobenius-Perron dimension, the paired dimension and the fusion dimension are intrinsic to a fusion category and do not depend on a choice of additional structure. However, the fusion dimension depends on the associator of the category and cannot be determined by the fusion ring alone.

\begin{defn}
A fusion category is \textit{pseudo-unitary} if $\text{FPdim}(X)^{2}=d_{\{X,\bar{X}\}}$ for all $X\in \Irr(\cC)$.
\end{defn}

We note that our definition has many equivalent formulations (see \cite[Section 8.3, 8.4]{EtNiOs2005}. If $\cC$ is pseudo-unitary, there exists a canonical spherical structure on $\cC$ whose spherical dimensions are the Frobenius-Perron dimensions \cite[Proposition 8.23]{EtNiOs2005}. All unitary fusion categories are pseudo-unitary, and thus in applications (most relevant) to physics all examples are pseudo-unitary.

\subsection{Modular categories}
\label{ssec:ModularCategories}
Recall a \textit{braided fusion category} is a fusion category equipped with a family of natural isomorphisms $\sigma_{X,Y}\colon X\otimes Y\rightarrow Y\otimes X$ satisfying a family of coherences (namely the hexagon axioms). If $\{X\in \Irr(\cC)\ :\ \sigma_{Y,X}\circ \sigma_{X,Y}=1_{X\otimes Y}\ \text{for all}\ Y\in \Irr(\cC)\}=\{\tu\}$, then we say $\cC$ is \textit{non-degenerately braided}, or simply non-degenerate. If $\cC$ is non-degenerate and in addition equipped with a spherical structure, we say $\cC$ is \textit{modular}.

We refer the reader to \cite{BaKi2001} for an overview of modular categories, modular data, and some of their important properties. Here we use the conventions $S_{X,Y}:=\Tr_{X,\bar{Y}}(\sigma_{\bar{Y},X}\circ \sigma_{X,\bar{Y}})$. Furthermore, we use $\dim\cC:=\sum_{X\in \Irr(\cC)} d^{2}_{X}$ which is a positive number independent of the spherical structure. We use $\sqrt{\dim\cC}$ to denote the positive square root. Non-degeneracy of the category $\cC$ is equivalent to the invertibility of the matrix $S$ \cite{Mg2002}.

For modular categories we have the relation 
$$
    S^{2}=\dim\cC \cdot C
$$ 
where $C_{X,Y}=\delta_{X,\bar{Y}}$ is the charge conjugation matrix, hence $S^{-1}_{X,Y}=\frac{1}{\dim\cC} S_{X,\bar{Y}}$.

We write 
$$
    N_{X_1,\ldots,X_n}^{Y_1,\ldots,Y_m}=\dimCC 
    \cC(X_1\otimes \cdots \otimes X_n,Y_1\otimes\cdots\otimes Y_m)
    \,.
$$

\noindent for the generalized fusion coefficients. Note that Frobenius reciprocity for fusion categories gives us $N_{X_1,\ldots,X_n}^{Y_1,\ldots,Y_m}=N_{X_1,\ldots,X_{n-1}}^{Y_1,\ldots,Y_m, \bar{X}_{n}}$.

For $g\in\ZZ_{\geq 0}$
we define the \emph{genus $g$ fusion coefficients} by
\begin{equation}\label{definefusioncoeff}
{}^g N_{X_1,\ldots,X_n}=\sum_{Z_0,\ldots,Z_g\in\Irr(\cC)} N_{X_1,\ldots,X_n}^{Z_0,\ldots,Z_g}N^{\tu}_{Z_0,\ldots,Z_g}\,.
\end{equation}
and use the notation
\begin{equation}
{}^g N^{Y_1,\ldots,Y_m}_{X_1,\ldots,X_n}=
{}^g N_{X_1,\ldots,X_n,\bar Y_1,\ldots,\bar Y_m}\,.
\end{equation}

The terminology is motivated by Remark \ref{rmk:TQFT} (below) and the following \emph{generalized Verlinde formula}. This formula is well known to experts, but we could not find it recorded in the literature, so we provide a proof.
\begin{prop}\label{genverlinde}
    We have 
    \begin{align*}
        {}^gN_{X_1,\ldots,X_n}
        &=
        \sum_{Y\in\Irr(\cC)}
\frac{S_{X_1,Y}}{S_{\tu,Y}}\cdots \frac{S_{X_n,Y}}{S_{\tu,Y}}\left(\frac{\sqrt{\dim \cC}}{S_{\tu,Y}}\right)^{2g-2}
=(\dim\cC)^{g-1}\sum_{Y\in\Irr(\cC)}\frac{S_{X_1,Y}\cdots S_{X_n,Y}}{d_Y^{n+2g-2}}\,.
    \end{align*}
\end{prop}
\begin{proof}
We recall the usual Verlinde formula for modular categories \cites{Tu1994,BaKi2001} 
  $$
    N^{\mathbbm{1}}_{X_1,X_2,X_3}
    =\frac1{\dim \cC} 
    \sum_{Y\in\Irr(\cC)}
    \frac{S_{X_1,Y}S_{X_2,Y} S_{X_3,Y}}
    {S_{\tu,Y}}
  $$
  and the ``sewing'' relation 
  $$
    \sum_{U\in\Irr(\cC)}S_{\bar U,X}S_{U,Z}=\dim\cC\cdot \delta_{X,Z}\,.
  $$
  Then we obtain the $n$-point Verlinde formula
  $$
    N^{\mathbbm{1}}_{X_1,\ldots,X_n}
    =\frac1{\dim \cC} 
    \sum_{Y\in\Irr(\cC)}
    \frac{S_{X_1,Y}\cdots S_{X_n,Y}}
    {S_{\tu,Y}^{n-2}}\,.
  $$
  by induction on $n$.
  
  To see this by semi-simplicity and Frobenius reciprocity, 
  $$
    N^{\mathbbm{1}}_{X_{1}, \dots, X_{n+1}}=\sum_{U}N^{\mathbbm{1}}_{X_{1}, \dots, X_{n-1}, \bar{U}}N^{\mathbbm{1}}_{ U,X_{n}, X_{n+1}}
  $$
  Assuming our formula works for $n$, then
  \begin{align*}
  N^{\mathbbm{1}}_{X_{1}, \dots, X_{n+1}} & =\sum_{U\in \Irr(\cC)}N^{\mathbbm{1}}_{X_{1}, \dots, X_{n-1}, \bar{U}}N^{\mathbbm{1}}_{U,  X_{n}, X_{n+1}}\\
  &=\frac1{(\dim \cC)^2} 
    \sum_{U,V,Y\in\Irr(\cC)}
    \frac{S_{X_1,Y}\cdots S_{X_{n-1},Y}S_{\bar U,Y}}
    {S_{\tu,Y}^{n-2}}\frac{S_{U,V}S_{X_{n},V} S_{X_{n+1},V}}
    {S_{\tu,V}}\\
    &=\frac1{\dim \cC} 
    \sum_{Y\in\Irr(\cC)}
    \frac{S_{X_1,Y}\cdots S_{X_{n+1},Y}}
    {S_{\tu,Y}^{n-1}}
  \end{align*}
    
  Applying sewing to
  \begin{align*}
  &\sum_{Z_0,\ldots,Z_g\in\Irr(\cC)} N_{X_1,\ldots,X_n}^{Z_0,\ldots,Z_g}N^{\mathbbm{1}}_{Z_0,\ldots,Z_g}
  \\&\qquad=\sum_{Z_0,\ldots,Z_g\in\Irr(\cC)} N^{\mathbbm{1}}_{X_1,\ldots,X_n,\bar Z_0,\ldots,\bar Z_g}N^{\mathbbm{1}}_{Z_0,\ldots,Z_g}
  \\&\qquad=\frac{1}{(\dim\cC)^2}
  \sum_{Z_0,\ldots,Z_g,U,V\in\Irr(\cC)}
  \frac{S_{X_1,U}\cdots S_{X_n,U}
  S_{\bar Z_0,U}\cdots S_{\bar Z_g,U}
  }{S_{\tu,U}^{n+g-1}}
  \frac{S_{Z_0,V}\cdots S_{Z_g,V}}
  {S_{\tu,V}^{g-1}}
  \end{align*}
  gives
  the desired result.
\end{proof}

\begin{rem}
\label{rmk:TQFT}
Associated to a modular tensor category $\cC$ is an (anomalous) 3-2-1 topological quantum field theory called the Reshetikhin-Turaev TQFT \cite{BaDoSPVi2015}. To a closed surface with a series of marked points labelled with by simple objects in $\cC$, the TQFT assigns a finite dimensional vector space. The number $M_{\cC}(g; X_1,\cdots, X_n)
^g =N_{X_1,\ldots,X_n}$ is the dimension of this vector space for a genus $g$ surface with marked points $X_{i}\in \cC$. While the vector space itself depends on the ordering and position of the marked points, the dimensions themselves do not. The formula for $M_{\cC}(g; X_1,\cdots, X_n)$ in terms of $S$-matrix appeared in the physics literature \cite[Eq.\ (A.7)]{MoSe1989}.
\end{rem}

The next two subsections review the basics of categorical actions and equivariantization which we will need in the sequel, and the extension theory of \cite{EtNiOs2010}.

\subsection{Equivariantization}\label{Equivariantization}

We now recall some facts about equivariantizations of fusion categories. As a general reference see \cites{EtGeNiOs2015,BuNa}. Let $\cC$ be any fusion category, and $G$ a finite group. Let us first recall some notions related to categorical actions of $G$ on $\cC$:
\begin{itemize}
    \item 
$\underline{G}$ is the monoidal category whose objects are elements of $G$ and the only morphisms are identities. The monoidal product of objects is the product in the group. 
\item
$\underline{\operatorname{Aut}}_{\otimes}(\cC)$ is the monoidal category whose objects are monoidal autoequivalences, and whose morphisms are monoidal natural isomorphisms.  The monoidal product of objects is composition of functors, and the monoidal product of natural isomorphisms is the usual one.
\item
If $\cC$ is braided, then $\underline{\operatorname{Aut}}^{\br}_{\otimes}(\cC)$ is the full monoidal subcategory of $\underline{\operatorname{Aut}}_{\otimes}(\cC)$ whose objects preserve the braiding.
\item
A \textit{categorical action} is a monoidal functor 
$M:\underline{G}\rightarrow \underline{\operatorname{Aut}}_{\otimes}(\cC)$.
\item If $\cC$ is braided, a \textit{braided categorical action} is a monoidal functor
$M:\underline{G}\rightarrow \underline{\operatorname{Aut}}^{\br}_{\otimes}(\cC)$
\end{itemize}

\medskip

\paragraph{\textbf{Notation for categorical actions.}} In what follows below, given a categorical action, we typically denote the functor assigned to $g$ simply by $g(\cdot)$. The tensorator for $g$ is typically indicated by $\rho^{g}_{X,Y}\colon g(X)\otimes g(Y)\rightarrow g(X\otimes Y)$. The tensorator for the categorical action is usually written $\mu_{g,h}=\{\mu^{X}_{g,h}:g(h(X))\rightarrow gh(X)\}_{X\in \cC}.$

\medskip

Given an arbitrary categorical action, recall its \textit{equivariantization} $\cC^{G}$ is defined as follows:

\begin{itemize}
\item
Objects are pairs $(X, \lambda )$ where $\lambda=\{\lambda_{h}: h(X)\cong X\}_{h\in G}$ is a family of natural isomorphisms satisfying

$$\begin{tikzcd}
g(h(X))
\ar[d, "{g(\lambda_{h})}"]
\ar[r,"{\mu^X_{g,h}}"]
&
(gh)(X)
\ar[d,"{\lambda_{gh}}"]
\\
g(X)
\ar[r,"{\lambda_{g}}"]
&
X
\end{tikzcd}$$

\item
Morphisms from $(X, \lambda)$ to $(Y,\delta)$ consist of $f\in\cC(X,Y)$ such that 
$$\delta_{g}g(f)=\lambda_{g}f$$ for all $g\in G$.
\end{itemize}

There is a canonical monoidal structure on this category which makes $\cC^{G}$ a fusion category if $\cC$ is.

Let $F_{G}\colon\cC^{G}\rightarrow \cC$ denote the forgetful functor, which simply forgets the equivariant structure. In this section we provide an explicit realization of an adjoint to this functor. 

\begin{rem}\label{adjoint remark} We briefly remark on our use of the term adjoint. Recall that every linear functor $F$ between semi-simple categories has an adjoint functor, which is both a left and a right adjoint. Since left (and right) adjoints are (respectively) unique up to natural isomorphism, then any left adjoint of $F$ is also a right adjoint, since it must be isomorphic to a two sided adjoint. This is simple a reflection of the fact that in rigid, semi-simple 2-category (in this case, the 2-category of finitely semi-simple categories) left and right duals of 1-morphisms (in this case functors) coincide up to isomorphism. Below we will sometimes need an explicit choice of units and counits for either a left or right adjunction, in order to equip the adjoints of (strong) monoidal functors with oplax and lax monoidal structures respectively.
\end{rem}

We now define a functor $I_{G}: \cC\rightarrow \cC^{G}$ which we call the \textit{induction functor}. On objects, we set 
$$
    I_{G}(X):=(\textstyle\bigoplus_{g\in G} g(X), \eta^{X})
$$
where the equivariant structure $\eta^{X}=\{\eta^{X}_{h}\}_{h\in G}$ is given by

$$
    \eta^{X}_{h}=\bigoplus_{g\in G} \mu^{X}_{h,g} :h(\bigoplus_{g\in G} g(X))=\bigoplus_{g\in G} h(g(X))\rightarrow \bigoplus_{g\in G} hg(X)=\bigoplus_{g\in G}g(X)\,.
$$
For a morphism, $f\in \cC(X,Y)$, we simply define 
$$
    I_{G}(f):=\bigoplus_{g\in G} g(f)\,.
$$

\begin{prop}\label{equivariantadjunction} $I_{G}$ as defined above is adjoint to the forgetful functor $F_{G}: \cC^{G}\rightarrow \cC$ 
\end{prop}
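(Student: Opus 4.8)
The plan is to establish the adjunction by constructing a natural isomorphism
$$
\Phi_{X,(Y,\delta)}\colon \cC^G\bigl(I_G(X),(Y,\delta)\bigr) \xrightarrow{\ \cong\ } \cC\bigl(X, F_G(Y,\delta)\bigr)
$$
and verifying naturality in both variables. First I would unwind the left-hand side: a morphism in $\cC^G$ from $I_G(X) = (\bigoplus_{g} g(X), \eta^X)$ to $(Y,\delta)$ is a morphism $\varphi \in \cC(\bigoplus_{g} g(X), Y)$ satisfying the equivariance condition $\delta_h \circ h(\varphi) = \varphi \circ \eta^X_h$ for all $h \in G$. Decomposing $\varphi = \sum_{g} \varphi_g$ with $\varphi_g \in \cC(g(X), Y)$, the equivariance constraint relates the various components $\varphi_g$ to one another, so the whole morphism should be determined by a single component, most naturally $\varphi_e$ (the summand indexed by the identity $e \in G$). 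I would therefore define $\Phi(\varphi) := \varphi_e$, viewed as an element of $\cC(X, Y) = \cC(X, F_G(Y,\delta))$ since $e(X)$ is canonically $X$.

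The key step is to write down the inverse explicitly and check it lands in the equivariant morphisms. Given $f \in \cC(X,Y)$, I would set
$$
\Psi(f) := \sum_{g\in G} \delta_g \circ g(f) \in \cC\Bigl(\textstyle\bigoplus_{g} g(X),\, Y\Bigr),
$$
where the $g$-th component is $\delta_g \circ g(f)\colon g(X) \to g(Y) \to Y$. The main verification is that $\Psi(f)$ satisfies the equivariance condition $\delta_h \circ h(\Psi(f)) = \Psi(f) \circ \eta^X_h$; this is exactly where the equivariance cocycle condition on the $\delta_h$ (the commuting square defining objects of $\cC^G$) combines with the definition $\eta^X_h = \bigoplus_g \mu^X_{h,g}$, and the coherence of the tensorator $\mu$ for the categorical action is used. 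I expect this compatibility check to be the main obstacle, as it requires carefully matching the summand $h(g(X)) \to hg(X)$ produced by $\mu^X_{h,g}$ against the reindexing of the sum and applying the defining square of $(Y,\delta)$ with the pair $(h,g)$.

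Finally I would confirm that $\Phi$ and $\Psi$ are mutually inverse: $\Phi(\Psi(f)) = \delta_e \circ e(f) = f$ using that $\delta_e$ and the structure isomorphism $e(X)\cong X$ are identities (up to the standard unit coherences), and $\Psi(\Phi(\varphi)) = \varphi$ using that the equivariance condition on $\varphi$ forces $\varphi_g = \delta_g \circ g(\varphi_e)$, recovering all components from $\varphi_e$. Naturality in $X$ and in $(Y,\delta)$ then follows routinely from functoriality of the $g(\slot)$ and from the definitions of $I_G$ on morphisms and of composition in $\cC^G$; I would state these but not belabor the diagram chases.
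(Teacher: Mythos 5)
Your proposal is correct and follows essentially the same route as the paper: the bijection sends an equivariant morphism to its identity-indexed component, and is inverted by $f\mapsto\bigoplus_g \lambda_g\circ g(f)\circ(\mu^X_{g,1})^{-1}$. The only difference is that the paper keeps the unitor correction $(\mu^X_{g,1})^{-1}$ explicit where you absorb it into ``standard unit coherences,'' which is harmless.
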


\begin{proof}
By Remark \ref{adjoint remark}, to show $I_{G}$ is an adjoint, it suffices to show $I_{G}$ is either left adjoint or right adjoint to $F_{G}$. However, in this proof we will choose specific left and right adjunctions between $I_{G}$ and $F_{G}$ in order to obtain concrete oplax and lax monoidal structures on $I_{G}$ respectively.

To establish $I_{G}$ as a left adjoint to $F_{G}$ it suffices to build a bijection $\cC^{G}(I_{G}(X), (Y, \lambda))\cong \cC(X, Y)$ natural in both $X$ and $(Y,\lambda)$.

Let $f\in \cC^{G}(I_{G}(X), (Y, \lambda))$. Then as a morphism in $\cC$, we may write $f=\bigoplus_{g\in G} f_{g}$, where $f_{g}: g(X)\rightarrow Y $. Our bijection will be defined by sending $f\mapsto f_{1}$. Since $f$ is equivariant, we see that $f_{gh}=\lambda_{g}\circ g(f_{h})\circ (\mu^{X}_{g,h})^{-1}$ and in particular, $f_{g}=\lambda_{g}\circ g(f_{1})\circ (\mu^{X}_{g,1})^{-1}$ and so $f$ is uniquely determined by $f_1$. Furthermore, for \textit{any} choice of $f^{\prime}\in \cC(X,Y)$, defining $f_{g}=\lambda_{g}\circ g(f^{\prime})\circ (\mu^{X}_{g,1})^{-1}$ yields an equivariant morphism by setting $f=\bigoplus_{g\in G} f_{g}$, establishing the bijection. Naturality in both variables is clear.

The construction of a right adjunction is very similar, we simply reverse the order of the arrows. In particular for the right adjunction, we need to build a family of natural isomorphisms $\cC^{G}((Y,\lambda), I_{G}(X))\cong \cC(Y, X)$. A morphism $f\in \cC^{G}((Y,\lambda), I_{G}(X)) $ is represented as a direct sum $f=\bigoplus_{g\in G}f_{g}$, where $f_{g}\in \cC(Y,X)$. As before, we define our bijection by sending $f\mapsto f_{1}$. As above, since $f$ is equivariant we have for all $g,h\in G$, $f_{gh}=\mu^{X}_{g,h}\circ g(f_{h})\circ \lambda^{-1}_{gh}$. In particular, $f_{g}=\mu^{X}_{g,1}\circ g(f_{1})\circ \lambda^{-1}_{1}$, hence $f$ is completely determined by $f_{1}$. Given any $f^{\prime}\in \cC(Y,X)$, define $f_{g}:=\mu^{X}_{g,1}\circ g(f^{\prime})\circ \lambda^{-1}_{1}$. Then $f=\bigoplus_{g\in G} f_{g}$ gives an equivariant morphism $\cC^{G}((Y,\lambda), I_{G}(X))$ which is clearly inverse to the above assignment.
\end{proof}

By \cite{MR0360749}, since $F_{G}$ is (strong) monoidal, then any right adjoint to $F_{G}$ can be equipped with the structure of an oplax monoidal functor. Similarly, any left adjoint can be equipped with a structure of a lax monoidal functor. These structures depend on the choice of adjunction (i.e. on the units and counits). Since $I_{G}$ is both a right and left adjoint to $F_{G}$, we can use the adjunctions described in the previous proposition to construct both a lax and oplax monoidal structure on $I_{G}$. 

Unpacking the construction in \cite{MR0360749}, we obtain a lax monoidal structure $\nu_{X,Y}\colon I_{G}(X)\otimes I_{G}(Y)\rightarrow I_{G}(X\otimes Y)$ which we can describe explicitly as follows.

Define $$(\nu_{X,Y})^{k}_{g,h}:g(X)\otimes h(Y)\rightarrow k(X\otimes Y)$$ 
by 
$$
    (\nu_{X,Y})^{k}_{g,h}:=\delta_{g,h}\delta_{g,k} \rho^{g}_{X,Y}\,,
$$
where $\rho^{g}_{x,y}$ is the tensorator for the monoidal functor $g$. Then set
$$\nu_{X,Y}:=\bigoplus_{g,h,k} (\nu_{X,Y})^{k}_{g,h}$$
Furthermore, the ``unit'' map of $I_{G}$ is given by a morphism $u\colon \mathbbm{1}_{\cC^{G}}\rightarrow I_{G}(\mathbbm{1}_{\cC})$, 
$$
    u= \oplus_{g\in G} g(1_{\mathbbm{1}})\,. 
$$

\noindent Similarly we can describe the oplax monoidal structure arising from the (left) adjunction in the proof of \ref{equivariantadjunction} as follows. Define 

$$\nu^{\prime}_{X,Y}\colon  I_{G}(X\otimes Y)\rightarrow I_{G}(X)\otimes I_{G}(Y)$$ 

by

$$
\nu^{\prime}_{X,Y}:=\bigoplus_{g,h,k\in G}(\nu^{\prime}_{X,Y})^{g,h}_{k}
$$ 

where 
$$
(\nu^{\prime}_{X,Y})^{g,h}_{k}:=\delta_{k,g}\delta_{k,h} (\rho^{g}_{X,Y})^{-1}\,.
$$

In fact, these lax and oplax structures have an additional compatibility: it's easy to verify that $\nu$ and $\nu^{\prime}$ equip $I_{G}$ with the structure of a ``special Frobenius functor'' \cite{DaPa2008}.

Now let $(A, m, \iota)$ be an algebra object, with multiplication $m\colon A\otimes A\rightarrow A$ and unit $\iota\colon \tu\rightarrow A$. The lax structure $\nu$  on $I_{G}$ allows us to define the algebra $(I_{G}(A),I_{G}(m)\circ \nu_{A,A}, I_{G}(\iota)\circ u)$. If $A$ in addition comes with a coproduct $m^{\prime}\colon A\rightarrow A\otimes A$ making it into a special Frobenius algebra, then $\nu^{\prime}_{A,A}\circ I_{G}(m^{\prime})$ makes $I_{G}(A)$ into a special Frobenius algebra.

\subsection{Extension theory}\label{extension theory}

In this section, we briefly review the extension theory from \cite{EtNiOs2010}. Let $G$ be a finite group. A (faithful) $G$-grading of a fusion category $\cC$ is a decomposition as linear categories $\cC=\bigoplus_{g\in G}\cC_{g}$ such that $\cC_{g}\otimes \cC_{h}\subseteq \cC_{gh}$ and $\cC_{g}\ne 0$ for all $g\in G$. If $\cC$ is a fusion category, a \textit{G-graded extension of} $\cC$ is a (faithful) $G$-graded fusion category $\displaystyle \cD=\bigoplus_{g\in G} \cD_{g}$ with $\cD_{1}=\cC$.

\begin{thm}\cite[Theorem 7.7]{EtNiOs2010} (Faithful) G-graded extensions of a fixed fusion category $\cC$ are classified by monoidal 2-functors 
$$
    \underline{\underline{G}}\rightarrow \underline{\underline{\operatorname{BrPic}}}(\cC)\,.
$$
\end{thm}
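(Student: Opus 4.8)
The plan is to reconstruct the equivalence between the (two-dimensional) groupoid of faithful $G$-graded extensions of $\cC$ and that of monoidal $2$-functors $\underline{\underline{G}}\to \underline{\underline{\operatorname{BrPic}}}(\cC)$ by producing the data in each direction and checking the two constructions are mutually inverse. Recall that $\underline{\underline{\operatorname{BrPic}}}(\cC)$ is the monoidal $2$-category (a one-object tricategory) whose $1$-morphisms are invertible $\cC$-$\cC$-bimodule categories composed under the relative tensor product $\boxtimes_{\cC}$, whose $2$-morphisms are bimodule equivalences, and whose $3$-morphisms are natural isomorphisms; whereas $\underline{\underline{G}}$ has objects the elements of $G$, only identity higher morphisms, and monoidal product the group law. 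A monoidal $2$-functor therefore amounts to a choice of invertible bimodule category $\cM_g$ for each $g$, an equivalence $\cM_g\boxtimes_{\cC}\cM_h\simeq \cM_{gh}$ for each pair, and coherence $2$- and $3$-isomorphisms satisfying a pentagon-type axiom.

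First I would pass from an extension to this data. Given $\cD=\bigoplus_g \cD_g$ with $\cD_1=\cC$, each component $\cD_g$ is a semisimple $\cC$-$\cC$-bimodule category under left and right tensor multiplication, and faithfulness of the grading guarantees $\cD_g\ne 0$. The essential point is that each $\cD_g$ is \emph{invertible}: the tensor product of $\cD$ restricts to a $\cC$-balanced bimodule functor $\cD_g\times\cD_{g^{-1}}\to\cD_1=\cC$, hence factors through a functor $\cD_g\boxtimes_{\cC}\cD_{g^{-1}}\to\cC$, and I would show this is an equivalence. The clean way is a Frobenius--Perron dimension count: all graded components of a graded fusion category share the same $\FPdim$, so $\FPdim(\cD_g\boxtimes_{\cC}\cD_{g^{-1}})=\FPdim(\cD_g)\FPdim(\cD_{g^{-1}})/\FPdim(\cC)=\FPdim(\cC)$, and together with dominance of the tensor functor this forces an equivalence. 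The same argument yields the equivalences $\cD_g\boxtimes_{\cC}\cD_h\simeq\cD_{gh}$ giving the $1$-morphism assignment, while the associativity constraint of $\cD$ restricted to objects of $\cD_g,\cD_h,\cD_k$ supplies the coherence $2$-isomorphisms, with the pentagon axiom of $\cD$ specializing to the monoidal $2$-functor coherence.

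Conversely, from a monoidal $2$-functor $F$ I would build an extension by setting $\cD:=\bigoplus_g F(g)$, defining the graded tensor product as the composite $F(g)\times F(h)\to F(g)\boxtimes_{\cC} F(h)\xrightarrow{\sim} F(gh)$ of the canonical functor into the relative product with the structure equivalence of $F$, and assembling the associator from the associativity of $\boxtimes_{\cC}$ together with the coherence $2$-isomorphisms of $F$; the pentagon for $\cD$ then reduces to the coherence axiom of $F$, and the unit structure comes from $F(1)\simeq\cC$ acting on itself. Finally I would verify that these two assignments are mutually inverse up to the appropriate notion of equivalence, upgrading the correspondence to an equivalence of the groupoids (respectively $2$-groupoids) of extensions and of monoidal $2$-functors, which is precisely the classification statement.

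I expect the main obstacle to be twofold. The conceptual heart is the invertibility of the components $\cD_g$ and the identification $\cD_g\boxtimes_{\cC}\cD_h\simeq\cD_{gh}$, which requires the Morita-theoretic input that equal Frobenius--Perron dimension plus dominance forces a bimodule equivalence; without this the assignment $g\mapsto[\cD_g]$ need not land in invertible bimodule categories. The heavier but more routine difficulty is bookkeeping the full tricategorical coherence: matching the associator and pentagon of $\cD$ with the $2$- and $3$-isomorphism data of the monoidal $2$-functor, and checking that equivalences of extensions correspond exactly to equivalences of monoidal $2$-functors, so that the correspondence is a genuine classification and not merely a bijection on objects.
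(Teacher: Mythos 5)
Your proposal is correct and follows essentially the same route as the source: the paper does not prove this statement but cites it as \cite[Theorem 7.7]{EtNiOs2010} and summarizes the correspondence in a table (components $\cD_{g}$ as invertible bimodule categories, the tensor product as the equivalences $T_{g,h}\colon \cD_{g}\boxtimes_{\cC}\cD_{h}\cong\cD_{gh}$, the associator as the coherence data $a_{g,h,k}$ subject to the pentagon), which is exactly the dictionary you reconstruct in both directions. The one substantive ingredient you supply beyond the paper's summary --- invertibility of each $\cD_{g}$ via dominance of the multiplication functor together with the Frobenius--Perron dimension count --- is precisely the argument of \cite{EtNiOs2010}, so your sketch matches the intended proof.
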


Here $\underline{\underline{G}}$ is the monoidal 2-category whose objects are elements of $G$ and the 1 and 2 morphisms are all identities. The monoidal product is given by group multiplication on objects, and the obvious composition of identities. $\underline{\underline{\operatorname{BrPic}}}(\cC)$ is the monoidal 2-category whose objects are invertible bimodule categories, 1-morphisms are bimodule equivalences, and 2-morphisms are bimodule functor natural isomorphisms. The monoidal product is defined by taking the relative product of bimodules (functors, natural transformations) over $\cC$ \cite[Definition 4.5]{EtNiOs2010}.

This classification is fairly transparent. The table below gives a correspondences between the data of a monoidal 2-functor and the data of the extension (in the table below we neglect units).

\bigskip

\begin{center}
 \begin{tabular}{|| m{20em} |  m{17em}||}
 \hline
 \textbf{Data of monoidal 2-functor} & \textbf{Data of extension} $\cC\subseteq \cD$ \\ [0.5ex] 
 \hline\hline
 Assignment $g\mapsto \cD_{g}$ & Definition of $g$ components 
 \mbox{$\cD=\bigoplus_{g} \cD_{g}$} \\ 
 \hline
  Bimodule equivalences $T_{g,h}\colon\cD_{g}\boxtimes_{\cC}\cD_{h}\cong \cD_{gh}$ &  Definition of tensor product bi-functor $\otimes\colon \cD_{g}\boxtimes \cD_{h}\rightarrow \cD_{gh}$  \\
 \hline
 $a_{g,h,k}\colon T_{gh,k}\circ (T_{g,h}\boxtimes_{\cC} \operatorname{Id}_{k})\cong T_{g,hk}\circ (\operatorname{Id}_{g}\boxtimes_{\cC} T_{h,k})$ & Associator \mbox{$\alpha\colon (X_{g}\otimes Y_{h})\otimes Z_{k}\rightarrow X_{g}\otimes (Y_{h}\otimes Z_{k}) $}  \\  
 \hline
\end{tabular}
\end{center}

\bigskip

It is then shown that the coherence that the $a_{g,h,K}$ is required to satisfy is equivalent to the pentagon axiom for the corresponding associator.

While this result is conceptually straightforward, for it to be useful requires an understanding of the monoidal 2-category $\underline{\underline{\operatorname{BrPic}}}(\cC)$, which in general is a complicated beast. However, if we truncate the top level and take isomorphism classes of bimodule equivalence as 1-morphisms, we obtain the monoidal category $\underline{\operatorname{BrPic}}(\cC)$.

This monoidal category is easier to understand. For a fusion category $\cC$, let $\mathcal{Z}(\cC)$ denote the \textit{Drinfeld center} of $\mathcal{C}$ \cite{EtGeNiOs2015}. Given an invertible bimodule category $\mathcal{M}$, we have two equivalences $L_{\mathcal{M}},R_{\mathcal{M}}\colon \cZ(\cC)\cong \operatorname{End}_{\cC-\cC}(\mathcal{M})$ given by left and right multiplication respectively, where $\operatorname{End}_{\cC-\cC}(\mathcal{M})$ denotes the monoidal category of bimodule endofunctors of $\mathcal{M}$ \cite{EtGeNiOs2015}. The composition $L^{-1}_{\mathcal{M}}\circ R_{\mathcal{M}}$ gives a braided auto-equivalence $\alpha_{\mathcal{M}}\in \underline{\operatorname{Aut}}^{\mathrm{br}}_{\otimes}(\cZ(\cC))$.

\begin{thm}\cite[Theorem 1.1]{EtNiOs2010} The assignment $\mathcal{M}\mapsto \alpha_{\mathcal{M}}$ described above extends to a monoidal equivalence $\underline{\operatorname{BrPic}}(\cC)\cong \underline{\operatorname{Aut}}^{\mathrm{br}}_{\otimes}(\cZ(\cC))$.
\end{thm}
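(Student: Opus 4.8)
The plan is to exhibit $\mathcal M \mapsto \alpha_{\mathcal M}$ as a monoidal functor and then prove it is an equivalence by checking four things: that it lands in braided autoequivalences, that it is monoidal (compatible with the relative tensor product), that it is fully faithful, and that it is essentially surjective. Since the excerpt already records that left and right multiplication give equivalences $L_{\mathcal M}, R_{\mathcal M}\colon \cZ(\cC)\xrightarrow{\sim} \operatorname{End}_{\cC-\cC}(\mathcal M)$, the composite $\alpha_{\mathcal M}=L_{\mathcal M}^{-1}\circ R_{\mathcal M}$ is automatically a monoidal autoequivalence of $\cZ(\cC)$, so the first genuine task is to verify that it is \emph{braided}. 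Conceptually this is forced by the Morita-invariance of the center: an invertible $\cC$-bimodule is a Morita self-equivalence of $\cC$, and the induced identification of $\cZ(\cC)$ with itself is braided. Concretely, I would recall that the braiding on $\cZ(\cC)$ is the half-braiding of the first argument evaluated at the second, and observe that $\alpha_{\mathcal M}$ is braided precisely when the two braidings obtained on $\operatorname{End}_{\cC-\cC}(\mathcal M)$ by transporting along $L_{\mathcal M}$ and along $R_{\mathcal M}$ coincide. That coincidence follows from the canonical isomorphism $Z\triangleright(m\triangleleft W)\cong (Z\triangleright m)\triangleleft W$ expressing that left and right multiplication by central objects commute, via a direct if fiddly diagram chase using the half-braiding data.

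Next I would establish the monoidal structure of the assignment, namely a coherent natural isomorphism $\alpha_{\mathcal M\boxtimes_{\cC}\mathcal N}\cong \alpha_{\mathcal M}\circ \alpha_{\mathcal N}$. The key input is the functoriality of $\operatorname{End}_{\cC-\cC}(-)$ for the relative tensor product: acting on the left of $\mathcal M\boxtimes_{\cC}\mathcal N$ only sees $\mathcal M$, acting on the right only sees $\mathcal N$, and the $\cC$-balancing in the middle lets a right action on $\mathcal M$ be absorbed as a left action on $\mathcal N$. Unwinding $L$ and $R$ for $\mathcal M\boxtimes_{\cC}\mathcal N$ in these terms produces the composite, and checking that the unit map $u$ and the associativity constraints match handles unitality and associativity of the monoidal functor.

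The main obstacle is \emph{essential surjectivity}: given a braided autoequivalence $\beta\in \underline{\operatorname{Aut}}^{\br}_{\otimes}(\cZ(\cC))$, one must produce an invertible $\cC$-bimodule $\mathcal M_{\beta}$ with $\alpha_{\mathcal M_{\beta}}\cong \beta$. I would translate the problem using the identification of $\cC$-bimodule categories with module categories over $\cC\boxtimes \rev{\cC}$, together with the Morita equivalence $\cC\boxtimes\rev{\cC}\sim \cZ(\cC)$ furnished by the canonical Lagrangian algebra (i.e.\ by the regular bimodule $\cC$). Under this dictionary $\cC$-bimodule categories correspond to $\cZ(\cC)$-module categories, the regular bimodule corresponds to $\cZ(\cC)$ acting on itself, and the braided autoequivalence $\beta$ can be used to twist the module structure, yielding a new $\cZ(\cC)$-module category and hence a bimodule $\mathcal M_{\beta}$. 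The real content is then to check that $\mathcal M_{\beta}$ is again invertible (for instance by verifying that its relative tensor product with its dual recovers $\cC$) and that the autoequivalence it induces is exactly $\beta$; tracking the braiding faithfully through these Morita identifications is the technical heart of the argument and where I expect the most work.

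Finally, for \emph{full faithfulness} I would show that a bimodule equivalence $\mathcal M\xrightarrow{\sim}\mathcal N$ induces by conjugation a monoidal natural isomorphism $\alpha_{\mathcal M}\cong \alpha_{\mathcal N}$, and conversely that every such isomorphism arises in this way and uniquely so, which together with essential surjectivity upgrades the assignment to the desired monoidal equivalence $\underline{\operatorname{BrPic}}(\cC)\cong \underline{\operatorname{Aut}}^{\br}_{\otimes}(\cZ(\cC))$. Because the classification of extensions by $2$-functors into $\underline{\underline{\operatorname{BrPic}}}(\cC)$ is already available from the preceding theorem, only the $1$-categorical truncation $\underline{\operatorname{BrPic}}(\cC)$ is at issue here, so once the inverse has been constructed this last step is largely bookkeeping.
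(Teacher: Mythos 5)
This statement is quoted verbatim from \cite[Theorem 1.1]{EtNiOs2010} as part of the background review in Section \ref{extension theory}; the paper gives no proof of it, so there is nothing internal to compare your argument against. Judged on its own terms, your outline identifies the right four components and broadly matches the strategy of the original proof (which also passes through the identification of $\cC$-bimodule categories with $\cZ(\cC)$-module categories via the Morita equivalence $\cC\boxtimes\rev{\cC}\sim \cZ(\cC)$ and the canonical Lagrangian algebra).

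Two points deserve flagging. First, your claim that $\alpha_{\mathcal M}=L_{\mathcal M}^{-1}\circ R_{\mathcal M}$ is ``automatically a monoidal autoequivalence'' is too quick: while $L_{\mathcal M}$ is monoidal, the right-action functor satisfies $R_{\mathcal M}(Z)\circ R_{\mathcal M}(W)\cong R_{\mathcal M}(W\otimes Z)$, i.e.\ it is naturally \emph{reverse}-monoidal, and one must use the half-braiding data to correct the order reversal before the composite can even be given a monoidal structure; this is intertwined with, not separate from, the verification that $\alpha_{\mathcal M}$ is braided. Second, the step you correctly identify as the technical heart --- constructing $\mathcal M_{\beta}$ from a braided autoequivalence $\beta$, proving it is invertible, and checking $\alpha_{\mathcal M_{\beta}}\cong\beta$ --- is precisely where the real content of the theorem lies, and your sketch defers it entirely. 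As a roadmap the proposal is sound; as a proof it leaves the crux unproved, which is presumably why the authors of the present paper simply cite the result rather than reprove it.
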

Thus given an extension with classifying functor $\underline{M}\colon \underline{\underline{G}}\rightarrow \underline{\underline{\operatorname{BrPic}}}(\cC)$, decategorifying canonically gives a monoidal functor
$$
    M\colon \underline{G}\rightarrow \underline{\operatorname{BrPic}}(\cC)\cong \underline{\operatorname{Aut}}^{\mathrm{br}}_{\otimes}(\cZ(\cC))\,.
$$

A monoidal functor $M\colon\underline{G}\to\underline{\operatorname{Aut}}^{\mathrm{br}}_{\otimes}(\cZ(\cC))$ is precisely a braided categorical action of $G$ on the Drinfeld center $\mathcal{Z}(\cC)$. The goal of this paper, is to recover the fusion rules of the extension from just the categorical action $M$. The reason this is useful is that often we use the above theorems in the reverse direction. 

Namely, suppose we want to \textit{construct} and extension from scratch. We can start from a categorical action $M\colon \underline{G}\rightarrow  \underline{\operatorname{Aut}}^{\mathrm{br}}_{\otimes}(\cZ(\cC))\cong \underline{\operatorname{BrPic}}(\cC)$. Then we can lift this to a monoidal 2-functor $\underline{M}\colon\underline{\underline{G}}\rightarrow \underline{\underline{\operatorname{BrPic}}}(\cC)$ if and only if a certain obstruction $o_{4}(M)\in H^{4}(G, \CC^{\times})$ vanishes. If it does, then we know an extension exists, and the possible associators form a torsor over $H^{3}(G, \CC^{\times})$.

In practice, one can often show the $o_{4}$ obstruction vanishes for general reasons (for example \cite{EtGal2018, GaJo2018}). In this situation, we know extensions exists, but it is often very difficult to say anything about the structure of such extensions in general. Thus new methods are required to work out the details of what an extension looks like when constructed in this way. The goal of this paper is precisely to provide such methods to determine the fusion rules of the extension.

In the sequel our method will require the existence of a spherical structure on the extension $\cD$. As mentioned in the introduction, to our knowledge there has been no general theory developed for constructing spherical structures on extensions, though it should certainly exist. In particular, there is a natural spherical analogue of the Braur-Picard groupoid, and one would expect spherical extensions would naturally be classified by monoidal 2-functors from $\underline{\underline{G}}$ to this category. However, as this theory has yet to be developed, thus having conditions on $\cC$ which  guarantee the existence of a spherical structures on our extensions automatically  makes it easier to apply our results.

We have the following proposition.

\begin{prop}\label{pseudounitaryext}
    Let $\cC$ be a pseudo-unitary fusion category, and let $\cC\subseteq \cD$ be a $G$-graded extension. Then $\cD$ is pseudo-unitary.
\end{prop}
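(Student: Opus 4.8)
The plan is to avoid checking the objectwise defining condition on $\cD$ directly---which would implicitly require controlling a pivotal structure on $\cD$, exactly the kind of data the paper warns is not yet understood for extensions---and instead to use the global characterization of pseudo-unitarity. By \cite[\S8.3--8.4]{EtNiOs2005}, a fusion category $\cE$ is pseudo-unitary if and only if $\dim(\cE)=\FPdim(\cE)$, where $\dim(\cE)=\sum_{X\in\Irr(\cE)}d_{\{X,\bar{X}\}}$ is the global dimension (the sum of paired dimensions) and $\FPdim(\cE)=\sum_{X\in\Irr(\cE)}\FPdim(X)^{2}$; moreover one always has the inequality $\dim(\cE)\le\FPdim(\cE)$. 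Thus it suffices to prove $\dim(\cD)=\FPdim(\cD)$.

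Both $\dim$ and $\FPdim$ are sums over $\Irr(\cD)$, and since $\Irr(\cD)=\bigsqcup_{g\in G}\Irr(\cD_{g})$ they split along the grading as $\FPdim(\cD)=\sum_{g\in G}\FPdim(\cD_{g})$ and $\dim(\cD)=\sum_{g\in G}\dim(\cD_{g})$, where $\FPdim(\cD_{g}):=\sum_{X\in\Irr(\cD_{g})}\FPdim(X)^{2}$ and $\dim(\cD_{g}):=\sum_{X\in\Irr(\cD_{g})}d_{\{X,\bar{X}\}}$. The next step is to show that every graded component has the same two dimensions as the trivial one $\cD_{1}=\cC$. Each $\cD_{g}$ is an invertible $\cC$-bimodule category, so $\FPdim(\cD_{g})=\FPdim(\cC)$ \cite{EtNiOs2010}, whence $\FPdim(\cD)=|G|\,\FPdim(\cC)$. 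Likewise, for a faithfully graded fusion category the global dimensions of all components agree, $\dim(\cD_{g})=\dim(\cC)$, so $\dim(\cD)=|G|\,\dim(\cC)$.

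Putting these together and using that $\cC$ is pseudo-unitary, hence $\dim(\cC)=\FPdim(\cC)$, gives $\dim(\cD)=|G|\dim(\cC)=|G|\FPdim(\cC)=\FPdim(\cD)$, so $\cD$ is pseudo-unitary by the characterization above. I expect the main obstacle to be the component-wise equality of global dimensions $\dim(\cD_{g})=\dim(\cC)$: unlike the Frobenius--Perron statement, this is an assertion about categorical (paired) dimensions and does not reduce to a fusion-ring computation. The duality $X\mapsto\bar{X}$ restricts to a bijection $\Irr(\cD_{g})\cong\Irr(\cD_{g^{-1}})$ preserving paired dimensions, which already yields $\dim(\cD_{g})=\dim(\cD_{g^{-1}})$; for the full equality with $\dim(\cC)$ one invokes that the global dimension is a Morita invariant preserved under passage to an invertible bimodule category, or equivalently the standard fact that the components of a faithfully graded fusion category are equidimensional. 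Once this input is in hand, the remaining steps are purely bookkeeping.
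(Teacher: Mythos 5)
Your argument is correct in outline, but it takes a genuinely different, ``global'' route from the paper's ``local'' one. The paper never leaves the level of a single simple object: for $X\in\cD_{g}$ it observes that $X\otimes\bar{X}$ is a connected special Frobenius algebra in $\cC$ whose specialness constant is exactly the paired dimension $d_{\{X,\bar{X}\}}$, invokes \cite[Corollary 3.10]{FuRuSc2002} to see that this connected algebra is symmetric for the canonical pseudo-unitary spherical structure on $\cC$, and concludes $d_{\{X,\bar{X}\}}=d_{X\otimes\bar{X}}=\FPdim(X)^{2}$ directly. That argument is self-contained given the FRS input and, as a bonus, exhibits the identity objectwise. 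Your route instead reduces to the global criterion $\dim(\cD)=\FPdim(\cD)$ and compares the two sums over the grading; this is cleaner bookkeeping, but it outsources the real content to the equality $\dim(\cD)=|G|\dim(\cC)$. You correctly flag this as the crux, but the first justification you offer is not actually an argument: an abstract invertible $\cC$-bimodule category does not carry an intrinsic global dimension, and Morita invariance of $\dim$ (in the sense of $\dim(\cC^{*}_{\mathcal{M}})=\dim(\cC)$) is a statement about the dual category, not about $\mathcal{M}$ itself. What you need is precisely the second fact you name --- that the graded components of a faithfully graded fusion category are equidimensional with respect to the \emph{global} (categorical) dimension, not just $\FPdim$ --- and this does require its own citation (it is proved by Gelaki--Nikshych in their work on nilpotent fusion categories, and also follows from the dimension counts in \cite{GeNaNi2009}); it is not a formal consequence of the $\FPdim$ statement, since $X\mapsto d_{\{X,\bar{X}\}}$ is not a character of the fusion ring. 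With that reference supplied, your proof is complete, and the passage from the global equality back to the objectwise definition is covered by the termwise inequality $d_{\{X,\bar{X}\}}\leq\FPdim(X)^{2}$ from \cite{EtNiOs2005}. In short: both proofs work; the paper's buys an explicit objectwise identity from one Frobenius-algebra lemma, while yours buys brevity at the cost of importing two structural facts about graded fusion categories.
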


\begin{proof}
Let $X\in \cD$ be in the $g$-graded component. Then choose a dual object $\bar{X}\in \cD_{g^{-1}}$, and solutions to the duality equations $R_{X}, \bar{R}_{X}, R^{*}_{X}, \bar{R}^{*}_{X}$ (here we use the notation preceding equation \ref{paireddimension}). Then $X\otimes \bar{X}\in \cC$ is canonically equipped with the structure of a connected special Frobenius algebra, with multiplication $m:=1_{X}\otimes R^{*}_{X} \otimes 1_{\bar{X}}$, co-multiplication $\Delta:=1_{X}\otimes R_{X} \otimes  1_{\bar{X}}$, unit $\iota:=\bar{R}_{X}$ and counit $\epsilon:=\bar{R}^{*}_{X}$. 

Then this algebra is special, with constants $m\circ \Delta=(R^{*}_{X}\circ R_{X}) 1_{X\otimes \bar{X}}$ and $\epsilon\circ \iota=\bar{R}^{*}_{X}\circ \bar{R}_{X}$. 
Thus the invariant quantity $\beta$ associated to any special Frobenius algebra defined by $\epsilon\circ m\cdot \Delta\circ \iota=\beta 1_{\tu}$ in this case is precisely the paired dimension $d_{\{X,\bar{X}\}}$. 

Since $X$ is simple, the algebra $X\otimes \bar{X}$ is connected (also called \textit{haploid} in the literature). Thus by \cite[Corollary 3.10]{FuRuSc2002}, this algebra will be symmetric with respect to \textit{any} spherical structure for which the spherical dimension of $X\otimes \bar{X}$ is non-zero. But symmetric special Frobenius algebras $A$ satisfy $\beta=d_{A}$. In particular, choosing the canonical pseudo-unitary spherical structure, the above shows that the connected Frobenius algebra $X\otimes \bar{X}$ is symmetric, hence $d^{\cD}_{\{X,\bar{X}\}}=\beta=d^{\cC}_{X\otimes \bar{X}}=\text{FPdim}_{\cC}(X\otimes \bar{X}) =\text{FPdim}_{\cD}(X\otimes \bar{X})=\text{FPdim}_{\cD}(X)^{2}$, where we have used pseudo-unitarity for the third equality. Thus $\cD$ is pseudo-unitary.
\end{proof}

\section{Recovering fusion rules from the Lagrangian algebra}\label{recovfusionrules}

In this section, we will explain how the fusion rules of a fusion category can be derived from a pair of algebraic operations on the vector space $\End_{\cZ(\cC)}(I(\mathbbm{1}))$. We use these results together with the facts we've assembled about equivariantizations to describe the fusion rules for extensions. Our conventions for half-braidings and spherical structures follow \cite{Mg2003}, \cite{Mg2003II}. We make extensive use the graphical calculus for spherical fusion categories. We use the ``optimistic" convention, so that our diagrams are read bottom to top.

We refer the reader to \cite{FuRuSc2002} for definitions concerning algebras in tensor categories and their various adjectives. We warn the reader that following \cite{EtGeNiOs2015} we use the word \textit{connected} to mean $\dimCC\cC(\tu, A)=1$, whereas in many references (including \cite{FuRuSc2002}) the word \textit{haploid} is used.  Let $A$ be any commutative, connected special Frobenius algebra in a braided spherical fusion category with $d_{A}\ne 0$, normalized so that 
$$
 \tikzmath{
      \draw[very thick]
      (-1,0) arc (180:360:1)-- (1,1) arc (0:180:1)--(-1,0)
      (0,-2) node [below] {$\scriptstyle A$} --(0,-1)
      (0,3) node [above] {$\scriptstyle A$}--(0,2);
      \mydotw{(0,2)}
      \mydotw{(0,-1)}
    }=
    \tikzmath{
      \draw[very thick]
      (0,-2) node [below] {$\scriptstyle A$}--
      (0,3) node [above] {$\scriptstyle A$};
    }
    \, \quad,\ 
   \varepsilon \cdot i=
    \tikzmath{
      \draw[very thick]
        (0,0)-- node [right] {$\scriptstyle A$} (0,1);
      \mydotw{(0,0)}
      \mydotw{(0,1)}
    }
    =d_{A}\,.
$$
We define the \textit{convolution product} on $\End_\cC(A)$ by 
\begin{equation}\label{defconvolutionproduct}
\tikzmath{
      \draw[very thick]
      (0,-2) node [below] {$\scriptstyle A$}--
      (0,3) node [above] {$\scriptstyle A$};
      \node[coupon] at (0,0.5) {$\scriptstyle a\ast b$};
    }
    :=
    \tikzmath{
      \draw[very thick]
      (-1,0) arc (180:360:1)-- (1,1) arc (0:180:1)--(-1,0)
      (0,-2) node [below] {$\scriptstyle A$} --(0,-1)
      (0,3) node [above] {$\scriptstyle A$}--(0,2);
      \node[coupon] at (-1,0.5) {$\scriptstyle a$};
      \node[coupon] at (1,0.5) {$\scriptstyle b$};
      \mydotw{(0,2)}
      \mydotw{(0,-1)}
    }\,.
\end{equation}
This operation on $\End_\cC(A)$ makes it into an associative, commutative algebra. The \emph{unit} with respect to the convolution product is given by 
$$
    i\circ \varepsilon = 
    \tikzmath{
        \draw[very thick] 
        (0,3) node [above] {$\scriptstyle A$}
        --(0,2)
        (0,1)--(0,0) node [below]
        {$\scriptstyle A$};
        \mydotw{(0,1)}
        \mydotw{(0,2)}
    }
$$
We note that $\End_\cC(A)$ also has the usual composition, and thus we have two operations on this vector space $(\End_\cC(A), \circ, \ast)$. 
By \cite[Corollary 2.5]{BiDa2018}, $(\End_\cC(A), \ast)$ is a semi-simple commutative algebra and is thus isomorphic to $\CC^{n}$. Thus we can ``diagonalize'' the multiplication by finding minimial idempotents. We note this idempotents give a canonical basis for the vector space $\End_\cC(A)$ and $(\End_\cC(A),\circ)$ becomes a based algebra.

\begin{lem}\label{isomorphism} 
    Let $A, B$ be connected special Frobenius algebras with non-zero dimension, normalized as above, which are isomorphic as algebras. Then 
    
    $$(\End_\cC(A),\circ, \ast)\cong (\End_\cC(B), \circ, \ast).$$
\end{lem}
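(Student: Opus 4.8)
The plan is to realize the isomorphism by conjugation and to show that an algebra isomorphism of these special algebras is automatically an isomorphism of the full Frobenius structure. Concretely, given an algebra isomorphism $\phi\colon A\to B$, I would define $\Phi\colon \End_\cC(A)\to \End_\cC(B)$ by $\Phi(a)=\phi\circ a\circ\phi^{-1}$. This is manifestly a linear bijection, and $\Phi(a\circ b)=\Phi(a)\circ\Phi(b)$ with $\Phi(1_A)=1_B$, so it respects composition. The only genuine content is that it also respects the convolution product. By \eqref{defconvolutionproduct}, $a\ast b=m\circ(a\otimes b)\circ\Delta$, so expanding $\Phi(a)\ast\Phi(b)$ and $\Phi(a\ast b)$ and comparing shows they agree exactly when $\phi$ intertwines both the multiplication and the comultiplication, i.e. $m_B\circ(\phi\otimes\phi)=\phi\circ m_A$ and $\Delta_B\circ\phi=(\phi\otimes\phi)\circ\Delta_A$. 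The first identity is the hypothesis; the crux is establishing the second.

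To obtain it I would prove the key lemma that on a \emph{connected} algebra $(A,m,\iota)$ the special Frobenius structure normalized by $m\circ\Delta=\id_A$ is uniquely determined by the algebra structure alone. First, connectedness gives $\dim\cC(\tu,A)=1$, and since $\tu$ is simple this multiplicity equals $\dim\cC(A,\tu)$, so the counit $\varepsilon$ is unique up to a scalar. Second, given $(m,\iota)$ together with any Frobenius form $\varepsilon$ for which $\varepsilon\circ m$ is nondegenerate, the comultiplication making $A$ a Frobenius algebra is reconstructed uniquely from $(m,\iota,\varepsilon)$ via the inverse copairing of $\varepsilon\circ m$; this is the standard fact that the coalgebra part of a Frobenius algebra is determined by its algebra part and its Frobenius form. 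Third, rescaling $\varepsilon\mapsto\lambda\varepsilon$ forces $\Delta\mapsto\lambda^{-1}\Delta$ by the counit axiom, hence $m\circ\Delta\mapsto\lambda^{-1}(m\circ\Delta)$, so the normalization $m\circ\Delta=\id_A$ pins down $\lambda$ uniquely; the value $\varepsilon\circ\iota$ is then forced to equal the common constant $d_A$.

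With this uniqueness in hand I would transport the structure of $A$ along $\phi$, setting $\Delta'=(\phi\otimes\phi)\circ\Delta_A\circ\phi^{-1}$ and $\varepsilon'=\varepsilon_A\circ\phi^{-1}$. Because $\phi$ preserves $m$ and $\iota$, the data $(B,m_B,\iota_B,\Delta',\varepsilon')$ is again a connected special Frobenius algebra normalized by $m_B\circ\Delta'=\id_B$. Applying the uniqueness lemma to $B$ then yields $\Delta'=\Delta_B$ and $\varepsilon'=\varepsilon_B$, i.e. $\phi$ is an isomorphism of Frobenius algebras; in particular $\Delta_B\circ\phi=(\phi\otimes\phi)\circ\Delta_A$. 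Substituting this into the comparison of the previous paragraph gives $\Phi(a\ast b)=\Phi(a)\ast\Phi(b)$, so $\Phi$ is the desired isomorphism of triples $(\End_\cC(A),\circ,\ast)\cong(\End_\cC(B),\circ,\ast)$.

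The main obstacle is the uniqueness lemma for the normalized Frobenius structure: one must check carefully that connectedness forces the counit to span a one-dimensional space and that the condition $m\circ\Delta=\id$ rigidifies the remaining scalar. Once $\phi$ is known to commute with $m$ and $\Delta$, the rest is a formal diagrammatic manipulation.
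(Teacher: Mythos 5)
Your proof is correct and follows essentially the same route as the paper: both arguments hinge on the uniqueness of the normalized Frobenius comultiplication on a connected algebra, transport the coproduct along the algebra isomorphism, invoke uniqueness to conclude the isomorphism intertwines $\Delta$, and then check that conjugation preserves both $\circ$ and $\ast$. The only difference is that the paper outsources the uniqueness statement to \cite[Corollary 3.10, Theorem 3.6]{FuRuSc2002} (via symmetry of connected Frobenius algebras), whereas you prove it directly from $\dim\cC(A,\tu)=1$ and the fact that the Frobenius form determines the coproduct; your unpacked argument is sound.
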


\begin{proof}
By \cite[Corollary 3.10]{FuRuSc2002}, $A$ and $B$ are symmetric, hence by \cite[Theorem 3.6]{FuRuSc2002} there is a unique comultiplication with the desired  normalization. Therefore any algebra intertwiner $\psi\in \cC(A, B)$ must  also intertwine the comultiplications. Indeed, if $m_{B}\in \cC(B\otimes B, B)$ and $n_{B}\in \cC(B, B\otimes B)$ denote the normalized Frobenius multiplication and comultiplication for $B$ respectively, then $(\psi^{-1}\otimes \psi^{-1})\circ n_{B}\circ \psi\in \cC(A, A\otimes A)$ provides an appropriately normalized comultiplication for $m_{A}$ and therefore must be $n_{A}$ (a similar argument applies to counits). Thus the map $\End_\cC(A)\rightarrow \End_\cC(B)$, $f\mapsto \psi \circ f \circ \psi^{-1}\in \End_\cC(B)$ is an isomorphism with respect to $\circ$ and $\ast$.
\end{proof}

\subsection{The canonical Lagrangian algebra}

Recall that on object in the Drinfeld center $Z(\cC)$ consists of pairs $(Y, \phi_{Y})$ where $Y\in \cC$ and $\phi_{Y}$ is a natural isomorphisms from the functor $Y\otimes \cdot\rightarrow \cdot \otimes Y$ called \textit{half-braidings} satisfying a version of the hexagon coherence \cite{Mg2003II}. Morphisms between such pairs consist of morphisms between the underlying objects which intertwine the half-braidings. The functor from $Z(\cC)$ to $\cC$ which sends a pair $(Y,\phi_{Y})$ to the object $Y$ and morphisms to themselves is called the \textit{forgetful functor}, denoted $\operatorname{F}\colon Z(\cC)\rightarrow \cC$. 

Let $\cC$ be a spherical fusion.
Let us once and for all pick a square root $\sqrt{d_{X}}$ for each $X\in \Irr(\cC)$. The forgetful functor admits an adjoint  $I\colon\cC\rightarrow Z(\cC)$ (see Remark \ref{adjoint remark}). By \cite{KiBa2010}, we can represent $I$ with the following explicit formula:
\begin{align}\label{definehalfbraiding}
I(X)&:=(\textstyle\bigoplus_{Y\in \text{Irr}(\cC)} Y\otimes X \otimes \overline{Y},\ \phi_{I(X)})
\,,
\\
\phi_{I(X),W}&:= 
   \bigoplus_{Y,Z\in\Irr(\cC)}
    \sum_i \sqrt{d_Y}\sqrt{d_Z}
    \tikzmath{
        \draw[thick]
        (0,0) node [below] {$\scriptstyle  Y$ }--(0,4)node [above] {$\scriptstyle Z$ }
        (1,0) node [below] {$\scriptstyle  X$ }--(1,4)node [above] {$\scriptstyle X$ }
        (0,2) to [in=270 ,out=110] (-1,4) node [above] {$\scriptstyle W$}
        (2,0)node [below] {$\scriptstyle \bar Y$ }--(2,4) node [above] {$\scriptstyle \bar Z$ }
        (2,2) to [out=290, in=90] (3,0) node [below] {$\scriptstyle W$}
        ;
        \mydotw{(0,2)}
        \mydotw{(2,2)}
        \node at (0,2) [right] {$\scriptstyle i$};
        \node at (2,2) [right] {$\scriptstyle i^{\bullet}$};
    }\,,
\end{align}
here $\{i\}$ is a basis for $\cC(Y, W\otimes Z)$ and $\{i^{\bullet}\}\subseteq \cC(\bar{Y}\otimes W, \bar{Z})$ is a dual basis defined via
\begin{equation}\label{graphicalpairing}
\delta_{i,j}:=\tikzmath{
        \draw[thick]
        (2,1) arc (360:180:1)
        (0,1) -- node [left]
        {$\scriptstyle \bar Y$}(0,3)
        arc (180:0:1)
        (0,3) to [in=110,out=290]
        node [below] {$\scriptstyle W$}
        (2,1)
        (2,1)--node [right]
        {$\scriptstyle Z$}(2,3)
        ;
        \mydotw{(2,1)}
        \mydotw{(0,3)}
        \node at (2,1) [right] {$\scriptstyle i$};
        \node at (0,3) [left] {$\scriptstyle j^{\bullet}$};
        }\,.
\end{equation}
        
\begin{rem}\label{basisdualbasis}
Here and throughout this paper, we make extensive use of the following elementary fact: for a finite dimensional vector space $V$, there is a canonical element in $V\otimes V^{*}$ defined by $\sum_{i} b_{i}\otimes b^{*}_{i} $ where $\{b_{i}\}\subseteq V$ is a basis and $\{b^{*}_{i}\}\subseteq V^{*}$ is the dual basis satisfying $b^{*}_{i}(b_{j})=\delta_{i,j}$. Though we have picked a basis to define it, this element does not actually depend on the \textit{choice} of basis. Throughout this text we will have vector spaces of morphisms arising from our category that are linked together with (various) non-degenerate pairings. Since our monoidal categories are linear, inserting morphisms into planar diagrams is multi-linear. It follows that any of our pictures which have a summation over diagrams which consist of a basis element together with its dual with respect to some non-degenerate pairing, the resulting overall morphism will not depend on the choice of this basis. This will be a key ingredient of many of our arguments.
\end{rem}

In \cite[Theorem 2.3]{KiBa2010}, the authors establish $I$ as a adjoint to the forgetful functor. In particular, they provide a (left) adjunction between $I$ and $F$ by establishing a bijection $\cC(X, F(Y, \phi_{Y}) )\cong Z(\cC)(I(X), (Y, \phi_{Y}) )$  defined by 
\begin{equation}\label{adjunctioncenter}
\tikzmath{
    \draw[ultra thick]
        (0,2)--(0,4);
    \draw[thick]
        (0,0) node [below] {$\scriptstyle X$} -- 
        (0,4) node [above] {$\scriptstyle Y$};
    \node[coupon] at (0,2) {$\scriptstyle f$};
}
\longmapsto
\bigoplus_{Z\in\Irr(\cC)}
\sqrt{d_Z}
\tikzmath{
    \draw[ultra thick]
        (1,2)--(1,4);
    \draw[thick] 
        (1,0) node [below] {$\scriptstyle X$} -- 
        (1,4) node [above] {$\scriptstyle Y$};
    \node[coupon] at (1,2) {$\scriptstyle f$};
    \node at (1,3) [above right] 
        {$\scriptstyle \phi_{Y,\bar Z}$};
    \draw[ultra thick, white, double] 
        (0,2.25) arc(180:0:1);
    \draw[thick] 
        (0,0) node [below] {$\scriptstyle Z$}
        --(0,2.25)
        arc(180:0:1)--
        (2,0) node [below] {$\scriptstyle \bar Z$} ;
}\,.
\end{equation}
The object $I(\tu)$ is endowed with the structure of a (symmetric) special Frobenius algebra in $Z(\cC)$, with structure maps
\begin{align}\label{Lagrangianalgmult}
    \tikzmath{
        \draw[very thick]
            (0,.5) node [below] {$\scriptstyle I(\mathbbm{1})$ } --(0,1) arc (180:0:1) --(2,.5) node [below] {$\scriptstyle I(\mathbbm{1})$ } 
            (1,2)--(1,4) node [above]
            {$\scriptstyle I(\mathbbm{1})$ }
            ;            
            \mydotw{(1,2)}
    }
    &=
    \bigoplus_{X\in\Irr(\cC)}
    \frac{1}{\sqrt{d_{X}}}
    \tikzmath{
        \draw[thick] 
            (0,0.5) node [below] {$\scriptstyle \bar X$ } -- (0,1) arc (180:0:1) --(2,.5)
            node [below] {$\scriptstyle X$ }
            (-1,.5) node [below] {$\scriptstyle  X$} -- (-1,1) arc (180:135:2)
            to [in=-90,out=45]
            (.5,3.5) --(.5,4) node [above] {$\scriptstyle X$}
              (3,.5) node [below] {$\scriptstyle  \bar X$}  -- (3,1) arc (0:45:2) to [in=-90,out=135] (1.5,3.5)--(1.5,4) node [above] {$\scriptstyle \bar  X$}
            ;}
    \,,
    \\
     \tikzmath{
    \draw[very thick]
    (0,0) to (0,2) node [above] {$\scriptstyle I(\mathbbm{1})$};
    \mydotw{(0,0)};
    }
    &=
    \bigoplus_{X\in \Irr(\cC)} \sqrt{d_{X}}\tikzmath{
    \draw[thick]
    (2,1)node [above] {$\scriptstyle \bar{X}$} --(2,0) 
    arc(360:180:1) 
    (0,0)--(0,1) node [above] {$\scriptstyle X$};
    }
\end{align}
The comultiplication and counit are given by the reflected diagrams of the multiplication and unit maps respectively, with the same normalizing coefficients. Thus $I(\tu)$ is a connected special Frobenius algebra normalized as in the previous section (note $d_{I(\tu)}=\sum_{X\in \Irr(\cC)} d^{2}_{X}=\dim(\cC)>0$).



From above we see
$$
    \End_{\cZ(\cC)}(I(\mathbbm{1}))
    \cong
    \cC(\mathbbm{1}, F\circ I(\mathbbm{1}))\cong \bigoplus_{X\in \Irr(\cC)} \cC(\mathbbm{1}, X\otimes \bar{X})\,.
$$
We have a basis for $\bigoplus_{X\in \Irr(\cC)} \cC(\mathbbm{1}, X\otimes \bar{X}) $ consisting of cups. Namely, set 

$$
    r_{Y}:=\sqrt{d_{Y}}\tikzmath{
    \draw[thick]
    (2,1)node [above] {$\scriptstyle Y$} --(2,0) 
    arc(360:180:1) 
    (0,0)--(0,1) node [above] {$\scriptstyle \bar Y$};
    }\,.
$$
Then $\{r_{Y}\}$ form a basis for $\bigoplus_{X\in \text{Irr}(\cC)} \cC(\mathbbm{1}, X\otimes \bar{X}) $ (note that here we are implicitly using the spherical structure to identify $\bar{Y}$ with its representative in $\Irr(\cC)$ and $\bar{\bar{Y}}$ with $Y$). Now we consider the image of $r_{Y}$ under the canonical adjunction from Equation \ref{adjunctioncenter}, which from Equation \ref{definehalfbraiding} reads
\begin{equation}\label{eYdefinition}
    e_Y=
    \bigoplus_{X, Z \in\Irr(\cC)}
    \sum_j d_Y\sqrt{d_X}\sqrt{d_Z} 
    \tikzmath{
        \draw[thick]
        (0,2) to [in=270 ,out=110] (-1,3) arc (0:180:1) -- (-3,0) node [below] {$\scriptstyle  X$}
        (0,4)node [above] {$\scriptstyle Z$ }--(0,2) arc (180:360:1) --node [left] { }
        (2,4) node [above] {$\scriptstyle \bar Z$ }
        (2,2) to [out=290, in=90] (3,0) node [below] {$\scriptstyle \bar X$}
        (-0.3,0.5) node [above] {$\scriptstyle \bar Y$ };
        \mydotw{(0,2)}
        \mydotw{(2,2)}
        \node at (0,2) [right] {$\scriptstyle j$};
        \node at (2,2) [right] {$\scriptstyle j^{*}$};
    }
    =
    \bigoplus_{X, Z \in\Irr(\cC)}
    \sum_i d_{Y}\sqrt{d_{X}}\sqrt{d_{Z}}
    \tikzmath{
        \draw[thick]
        (0,0) node [below] {$\scriptstyle  X$ }--(0,4)node [above] {$\scriptstyle Z$ }
        (0,1) to [in=260,out=80] 
        node [above] {$\scriptstyle Y$}
        (2,3)
        (2,0)node [below] {$\scriptstyle \bar X$ }--(2,4) node [above] {$\scriptstyle \bar Z$ }
        ;
        \mydotw{(0,1)}
        \mydotw{(2,3)}
         \node at (0,1) [right] {$\scriptstyle i^{\bullet}$};
        \node at (2,3) [right] {$\scriptstyle i$};
    }.\,
\end{equation}
The summation of $i$ is over a basis for $\cC(Y\otimes \bar{X},\bar{Z})$, and $i^{*}$ is a dual basis with respect to a reflected version of the graphical pairing defined \ref{graphicalpairing}, given explicitly by

\begin{equation}
\delta_{i,j}:=\tikzmath{
        \draw[thick]
        (2,1) arc (360:180:1)
        (0,1) -- node [left]
        {$\scriptstyle \bar X$}(0,3)
        arc (180:0:1)
        (0,1) to [in=240,out=70]
        node [below] {$\scriptstyle Z$}
        (2,3)
        (2,1)--node [right]
        {$\scriptstyle Y$}(2,3)
        ;
        \mydotw{(2,3)}
        \mydotw{(0,1)}
        \node at (2,3) [right] {$\scriptstyle i^{*}$};
        \node at (0,1) [left] {$\scriptstyle j$};
        }\,.
\end{equation}

To see that the graphical terms in equation \ref{eYdefinition} are equal, note that the rotated $j$ and $j^{\bullet}$ form a basis and dual basis with respect to the graphical pairing as well (this can be easily seen using sphericality with the $Z$ string and using pivotality to apply a $2\pi$ rotation to the resulting twisted morphism $j$, which results in the graphical pairing from \ref{graphicalpairing}). Thus we can apply Remark \ref{basisdualbasis}.

A straightforward computation then gives us the following:
\begin{enumerate}
    \item 
    $\{e_{Y}\}$ forms a basis for $\End_{Z(\cC)}(I(\tu))$,
    \item
    $e_{Y}\ast e_{Z}=\delta_{Y,Z} e_{Y}$.
\end{enumerate}

\medskip 

To see the second point,
$$
    e_Y\ast e_Z = 
    \bigoplus_{X, V \in\Irr(\cC)}
    \sum_{i,j} d_{Y}d_Z 
    \sqrt{d_{X}}\sqrt{d_{V}}
    \tikzmath{
        \draw[thick]
        (0,0) node [below] {$\scriptstyle  X$ }--(0,4)node [above] {$\scriptstyle V$ }
        (0,1) to [in=260,out=80] 
        node [above] {$\scriptstyle Y$}  (2,3)
        (2,1) node [below left] {$\scriptstyle \bar X$ }--(2,3)
        node [above left] {$\scriptstyle \bar V$ }
        arc (180:0:1) -- (4,1) arc (360:180:1)
        (4,1) to [in=260,out=80] 
        node [above] {$\scriptstyle Z$}  (6,3)
        (6,0) node [below] {$\scriptstyle  \bar X$ }--(6,4)node [above] {$\scriptstyle \bar V$ }
        ;
        \mydotw{(0,1)}
        \mydotw{(2,3)}
        \mydotw{(4,1)}
        \mydotw{(6,3)}
        \node at (0,1) [right] {$\scriptstyle i^{\bullet}$};
        \node at (2,3) [right] {$\scriptstyle i$};
        \node at (4,1) [right] {$\scriptstyle j^{\bullet}$};
        \node at (6,3) [right] {$\scriptstyle j$};
    }\,
    = \delta_{Y,Z} \cdot
    \bigoplus_{X, V \in\Irr(\cC)}
    \sum_i d_{Y}\sqrt{d_{X}}\sqrt{d_{V}}
    \tikzmath{
        \draw[thick]
        (0,0) node [below] {$\scriptstyle  X$ }--(0,4)node [above] {$\scriptstyle V$ }
        (0,1) to [in=260,out=80] 
        node [above] {$\scriptstyle Y$}
        (2,3)
        (2,0)node [below] {$\scriptstyle \bar X$ }--(2,4) node [above] {$\scriptstyle \bar V$ }
        ;
        \mydotw{(0,1)}
        \mydotw{(2,3)}
         \node at (0,1) [right] {$\scriptstyle i^{\bullet}$};
        \node at (2,3) [right] {$\scriptstyle i$};
    }\,
$$
since
$$
   \tikzmath{
        \draw[thick]
        (0,-1) node [below] {$\scriptstyle Y$}--
        (0,1) to [in=260,out=80]     (2,3)
        (2,1) node [below left] {$\scriptstyle \bar X$ }--(2,3)
        node [above left] {$\scriptstyle \bar V$ }
        arc (180:0:1) -- (4,1) arc (360:180:1)
        (4,1) to [in=260,out=80] 
        (6,3)--(6,5)   node [above] {$\scriptstyle Z$} 
        ;
        \mydotw{(2,3)}
        \mydotw{(4,1)}
        \node at (2,3) [right] {$\scriptstyle i$};
        \node at (4,1) [right] {$\scriptstyle j^{\bullet}$};
    }\,
    =\delta_{Y,Z} \frac1{d_X}\  
   \tikzmath{
        \draw[thick]
        (-4,-1) node [below] {$\scriptstyle Y$}--(-4,5) node [above] {$\scriptstyle Y$}
        (0,1) to [in=260,out=80] 
        node [above] {$\scriptstyle Y$}  (2,3)
        (2,1) node [below left] {$\scriptstyle \bar X$}--(2,3)
        node [above left] {$\scriptstyle \bar V$}
        arc (180:0:1) -- (4,1) arc (360:180:1) (4,1) 
        to [in=260,out=80] 
        node [above] {$\scriptstyle Y$}  (6,3) 
        arc (0:180:4) -- (-2,0) arc (180:360:1)--(0,1)
        ;
        \mydotw{(2,3)}
        \mydotw{(4,1)}
        \node at (2,3) [right] {$\scriptstyle i$};
        \node at (4,1) [right] {$\scriptstyle j^{\bullet}$};
    }\,
    =\delta_{Y,Z} \frac1{d_Y}\  
    \tikzmath{
        \begin{scope}[yscale=-1]
        \draw (-2,-1) node [above] {$\scriptstyle Y$}--(-2,5) node [below] {$\scriptstyle Y$};
        \draw[thick]
        (2,1) arc (360:180:1)
        (0,1) -- node [left]
        {$\scriptstyle V$}(0,3)
        arc (180:0:1)
        (0,3) to [in=110,out=290]
        node [below] {$\scriptstyle Y$}
        (2,1)
        (2,1)--node [right]
        {$\scriptstyle Z$}(2,3)
        ;
        \mydotw{(2,1)}
        \mydotw{(0,3)}
        \node at (2,1) [right] {$\scriptstyle i$};
        \node at (0,3) [left] {$\scriptstyle j^{\bullet}$};
        \end{scope}
        }\,.
    =\delta_{Y,Z} \frac1{d_Y}\delta_{i,j}\ 
       \tikzmath{
        \draw[thick]
        (-4,-1) node [below] {$\scriptstyle Y$}--(-4,5) node [above] {$\scriptstyle Y$};
        }
    \,.
$$

In other words, the collection $\{e_{Y}\}$ diagonalizes the convolution product. 

\begin{prop}\label{fusioncomposition}
    $e_{Y}\circ e_{Z}=\sum_{X\in \Irr(\cC)} \frac{d_{Y}d_{Z}}{d_{X}} N^{X}_{YZ} e_{X}$. 
\end{prop}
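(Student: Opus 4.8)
The plan is to compute $e_Y \circ e_Z$ directly from the graphical formula \eqref{eYdefinition} and then extract the coefficient of each basis element $e_X$ by pairing with the convolution product, exactly as in the recipe $(e_Y\circ e_Z)\ast e_X = C^X_{YZ}e_X$ described in the introduction. Since $\{e_X\}$ is a basis for $\End_{Z(\cC)}(I(\tu))$ and diagonalizes $\ast$ with $e_X\ast e_X=e_X$, applying $\ast\, e_X$ to both sides of the claimed identity picks out the single coefficient $\frac{d_Yd_Z}{d_X}N^X_{YZ}$. So it suffices to establish that $(e_Y\circ e_Z)\ast e_X = \frac{d_Yd_Z}{d_X}N^X_{YZ}\,e_X$, which reduces the whole problem to evaluating one closed diagram (a scalar).

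First I would stack the two pictures for $e_Y$ and $e_Z$ to form $e_Y\circ e_Z$; on the summand of $I(\tu)$ indexed by $X\in\Irr(\cC)$ this glues the $Y$-cup of $e_Z$ to the $Z$-target strands of the lower diagram, producing a configuration with both a $Y$-strand and a $Z$-strand attached to the through-strands $X$ and $\bar X$. Then I would compose with the convolution-multiplication against $e_X$, which by \eqref{defconvolutionproduct} caps the two ends with the Frobenius comultiplication and multiplication and inserts the third diagram. The key step is that this closed diagram is a morphism in $\cC(\tu,\tu)=\CC$, and evaluating it amounts to resolving the internal network of trivalent vertices into a multiple of the identity. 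The resulting scalar is governed by the dimension of $\cC(Y\otimes Z,X)$, i.e.\ by $N^X_{YZ}$, together with the accumulated normalization factors $\sqrt{d_X},\sqrt{d_Z},d_Y,\dots$ from the definitions of $e_Y,e_Z,e_X$ and from the Frobenius structure maps of $I(\tu)$.

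The main obstacle is bookkeeping the normalizing square roots and verifying that they collect into exactly $\frac{d_Yd_Z}{d_X}$. Each $e_W$ carries a factor $d_W\sqrt{d_{\text{source}}}\sqrt{d_{\text{target}}}$ summed over its internal indices, the multiplication/comultiplication of $I(\tu)$ contribute factors $1/\sqrt{d_{(\slot)}}$, and the convolution cap contributes further dimension factors; sphericality is what guarantees these combine consistently regardless of the chosen square roots $\sqrt{d_X}$. Concretely I would use the standard sliding and bending moves of the spherical graphical calculus to isolate a "theta-like" subdiagram whose value is a dimension, collapse the through-strands, and read off the power of each $d$. I expect the fusion multiplicity $N^X_{YZ}$ to emerge as the dimension of the space of trivalent couplings $Y,Z\to X$ once the half-braidings in $I(\tu)$ are untangled; the half-braiding crossings should contribute only to reorganizing strands and not to the final scalar, since everything is closed up into $\cC(\tu,\tu)$. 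A clean way to finish is to observe that both sides lie in the commutative algebra $(\End_{Z(\cC)}(I(\tu)),\ast)$ with basis $\{e_X\}$, so matching the single $e_X$-coefficient for every $X$ establishes the identity.
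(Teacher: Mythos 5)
Your reduction is logically sound: since the $e_X$ form a basis with $e_W\ast e_X=\delta_{W,X}e_X$, it does suffice to show $(e_Y\circ e_Z)\ast e_X=\frac{d_Yd_Z}{d_X}N^X_{YZ}\,e_X$ for each $X$ (though note that this expression is a multiple of $e_X$, not yet a scalar; you would still need to extract the coefficient from a nonzero component or a trace). This framing differs from the paper, which computes $e_Y\circ e_Z$ directly and never invokes the convolution product in this proof. The problem is that your plan stops exactly where the real content begins: writing that you ``expect the fusion multiplicity $N^X_{YZ}$ to emerge'' is not an argument, and the mechanism by which it emerges is the one idea the proof cannot do without. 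In the stacked diagram for $e_Y\circ e_Z$, each $(P,R)$-component carries an internal sum over $Q\in\Irr(\cC)$ and over a basis $\{i\}$ of $\cC(Z\otimes\bar P,\bar Q)$ paired with a basis $\{j\}$ of $\cC(Y\otimes\bar Q,\bar R)$, together with their dual bases. The paper's key step is to observe that the composite trees $d_Q\,j\circ(1_Y\otimes i)$ form a graphically normalized basis of the four-point space $\cC(Y\otimes Z\otimes\bar P,\bar R)$, and, since this basis occurs paired with its dual, it may be exchanged for the other tree basis --- the one that first fuses $Y\otimes Z$ into an intermediate simple $U$. After that exchange the diagram is literally $e_U$, the multiplicity $N^U_{YZ}$ appears as the count of basis vectors with internal label $U$, and the dimension factors assemble into $d_Yd_Z/d_U$. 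Your route would still require this recoupling (or an equivalent completeness identity) to produce $N^X_{YZ}$; convolving with $e_X$ first only adds the Frobenius structure maps of $I(\tu)$ to the picture without supplying that identity, so it makes the diagram larger rather than simpler.

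Two smaller corrections. First, there are no half-braidings to ``untangle'' here: the multiplication and comultiplication of $I(\tu)$ are planar morphisms in $\cC$, and the half-braiding $\phi_{I(\tu)}$ plays no role in this computation, so that part of your bookkeeping worry is moot. Second, sphericality is not what reconciles the choices of square roots $\sqrt{d_X}$; those choices cancel because each vertex always appears together with its dual with respect to the rotationally invariant pairing. What sphericality buys is the rotational invariance of that pairing itself, which is what legitimizes the basis exchange above.
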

\begin{proof}
We see that 
$$
    e_{Y}\circ e_{Z}=\bigoplus_{P,R\in \Irr(\cC)} \sum_{Q\in \Irr(\cC)} \sum_{i,j} d_{Y} d_{Z} d_{Q} \sqrt{d_{R}} \sqrt{d_{P}} 
\tikzmath{
        \draw[thick]
        (0,0) node [below] {$\scriptstyle  P$ }--(0,5)node [above] {$\scriptstyle R$ }
        (0,1) to [in=260,out=80] 
        node [above] {$\scriptstyle Z$}
        (2,2)
        (0,3) to [in=260,out=80] 
        node [above] {$\scriptstyle Y$}
        (2,4)
        (2,0)node [below] {$\scriptstyle \bar P$ }--(2,5) node [above] {$\scriptstyle \bar R$ }
        ;
        \mydotw{(0,3)}
        \mydotw{(2,4)}
        \mydotw{(0,1)}
        \mydotw{(2,2)}
        \node at (0,1) [right] {$\scriptstyle i^{\bullet}$};
        \node at (2,2) [right] {$\scriptstyle i$};
        \node at (0,3) [right] {$\scriptstyle j^{\bullet}$};
        \node at (2,4) [right] {$\scriptstyle j$};
        \node at (0,2) [left] {$\scriptstyle Q$};
        \node at (2,3) [right] {$\scriptstyle \bar Q$};
    }\,.
$$
However, the sets 
$$d_{Q} \tikzmath{
        \draw[thick]
        (-1,0) node [below] {$\scriptstyle Y$} --(1,2)
        (1,2) to  (1,3) node [above] {$\scriptstyle \bar{R}$}
        (2,1) to  (1,2)
        (1,0)node [below] {$\scriptstyle Z$}--(2,1)
        (3,0) node [below] {$\scriptstyle \bar{P}$} --(2,1);
        \mydotw{(1,2)};
        \mydotw{(2,1)};
        \node at (1.3,1.7) [right] {$\scriptstyle \bar Q$};
        \node at (1,2) [left] {$\scriptstyle j$};
        \node at (2,1) [left] {$\scriptstyle i$};
        },\quad  
d_{U} \tikzmath{
        \draw[thick]
        (-1,0) node [below] {$\scriptstyle Y$} --(1,2)
        (1,2) to  (1,3) node [above] {$\scriptstyle \bar{R}$}
        (1,0)node [below] {$\scriptstyle Z$}--(0,1)
        (3,0) node [below] {$\scriptstyle \bar{P}$} --(1,2);
        \mydotw{(1,2)};
        \mydotw{(0,1)};
        \node at (0.6,1.8) [left] {$\scriptstyle U$};
        \node at (1,2) [right] {$\scriptstyle k$};
        \node at (0,1) [left] {$\scriptstyle l$};
        }
$$
\noindent as $Q, U$ runs over $\Irr(\cC)$ and $i,j, k,l$ run over the graphically normalized bases both form a (graphically normalized) basis for $\cC(Y\otimes Z\otimes \bar{P}, \bar{R})$. Since the first basis set appears together with its dual in the above expression, we can replace it with the latter (Remark \ref{basisdualbasis}), to obtain
\begin{equation*}
    e_{Y}\circ e_{Z}=\bigoplus_{P, R \in\Irr(\cC)}
    \sum_i d_{Y}d_{Z}N^{U}_{Y,Z}\sqrt{d_{R}}\sqrt{d_{P}}
    \tikzmath{
        \draw[thick]
        (0,0) node [below] {$\scriptstyle  P$ }--(0,4)node [above] {$\scriptstyle R$ }
        (0,1) to [in=260,out=80] 
        node [above] {$\scriptstyle U$}
        (2,3)
        (2,0)node [below] {$\scriptstyle \bar P$ }--(2,4) node [above] {$\scriptstyle \bar R$ }
        ;
        \mydotw{(0,1)}
        \mydotw{(2,3)}
         \node at (0,1) [right] {$\scriptstyle k^{\bullet}$};
        \node at (2,3) [right] {$\scriptstyle k$};
    }
    = \sum_{U\in \Irr(\cC)}\frac{d_{Y}d_{Z}}{d_{U}}N^{u}_{Y,Z} e_{U}\,.\qedhere
 \end{equation*}    
\end{proof}
Now let $\cC$ be a fusion category and $d$ a spherical dimension function. Then consider $(K_{0}(\cC), \cdot, \ast_{d})$, where $K_{0}(\cC)=\mathbbm{C}[\Irr(\cC)]$, $[X]\cdot[Y]=\sum_{Z\in \Irr(\cC)} N^{Z}_{XY}[Z]$, and $[X]\ast_{d}[Y]=\frac{\delta_{X,Y}}{d_{X}} [X]$. Then the above proposition and a straightforward computation gives us the following corollary:
\begin{cor}
    \label{recovfusionrulesthm} The assignment $e_{X}\mapsto d_{X}[X]$ gives an isomorphism $$\textbf{}(\operatorname{End}_{\cZ(\cC)}(I(\mathbbm{1})), \circ, \ast)\cong (K_{0}(\cC), \cdot, \ast_{d})\,.$$
\end{cor}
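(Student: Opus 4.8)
The plan is to check directly that the indicated linear map intertwines both pairs of products, reducing everything to Proposition \ref{fusioncomposition} and the orthogonality relation (2) established just above. Write $\Phi\colon \End_{\cZ(\cC)}(I(\tu))\to K_{0}(\cC)$ for the linear extension of $e_{X}\mapsto d_{X}[X]$. First I would observe that $\Phi$ is a linear isomorphism: the elements $\{e_{X}\}_{X\in\Irr(\cC)}$ form a basis of the domain by point (1), the classes $\{[X]\}_{X\in\Irr(\cC)}$ form a basis of $K_{0}(\cC)=\CC[\Irr(\cC)]$, and each scalar $d_{X}$ is nonzero since $d\colon\Irr(\cC)\to\mathbbm{R}_{\neq 0}$; hence $\Phi$ sends a basis bijectively to a rescaled basis.

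Next I would verify compatibility with the composition and fusion products. Using Proposition \ref{fusioncomposition},
$$
\Phi(e_{Y}\circ e_{Z})=\Phi\Big(\sum_{X\in\Irr(\cC)}\tfrac{d_{Y}d_{Z}}{d_{X}}N^{X}_{YZ}\,e_{X}\Big)=\sum_{X\in\Irr(\cC)} d_{Y}d_{Z}N^{X}_{YZ}\,[X]=(d_{Y}[Y])\cdot(d_{Z}[Z])=\Phi(e_{Y})\cdot\Phi(e_{Z}),
$$
where the factor $d_{X}$ coming from $\Phi(e_{X})=d_{X}[X]$ exactly cancels the denominator $d_{X}$ in the composition formula, leaving the bare fusion coefficients $N^{X}_{YZ}$. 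This cancellation is precisely the purpose of rescaling by $d_{X}$.

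Finally I would verify compatibility with the two convolution products. Since $e_{Y}\ast e_{Z}=\delta_{Y,Z}\,e_{Y}$ by point (2), and $[Y]\ast_{d}[Z]=\tfrac{\delta_{Y,Z}}{d_{Y}}[Y]$ by definition,
$$
\Phi(e_{Y}\ast e_{Z})=\delta_{Y,Z}\,d_{Y}[Y]\qquad\text{and}\qquad \Phi(e_{Y})\ast_{d}\Phi(e_{Z})=d_{Y}d_{Z}\,\tfrac{\delta_{Y,Z}}{d_{Y}}\,[Y]=\delta_{Y,Z}\,d_{Y}[Y],
$$
using that $d_{Z}=d_{Y}$ on the support of $\delta_{Y,Z}$. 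The two expressions agree. As $\Phi$ is a bijection that is multiplicative for both $\circ$ and $\ast$, it is the claimed isomorphism.

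There is no serious obstacle here: all the content lives in Proposition \ref{fusioncomposition} and the orthogonality $e_{Y}\ast e_{Z}=\delta_{Y,Z}e_{Y}$, both already in hand. The only point worth flagging is that a \emph{single} rescaling $e_{X}\mapsto d_{X}[X]$ must straighten out both products at once; this succeeds because the $\tfrac{1}{d_{X}}$ appearing in the composition formula and the $\tfrac{1}{d_{Y}}$ built into $\ast_{d}$ are each matched against the same scaling factor, so the spherical dimensions cancel consistently in both computations, independently of the chosen spherical structure.
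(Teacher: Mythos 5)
Your proof is correct and is exactly the ``straightforward computation'' the paper invokes without writing out: the paper derives the corollary directly from Proposition \ref{fusioncomposition} together with the orthogonality $e_{Y}\ast e_{Z}=\delta_{Y,Z}e_{Y}$, just as you do. Both the cancellation of $d_{X}$ against the composition coefficients and the matching of $\ast$ with $\ast_{d}$ check out, so there is nothing to add.
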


Thus if we have the algebraic structure $(\End_{\cZ(\cC)}(I(\tu)), \circ, \ast)$ \textit{and we know the spherical dimension function}, we can determine the fusion rules by rescaling the canonical basis. Unfortunately this is not information we will have a-priori.

In the extension construction described in Section \ref{extension theory} the input is a categorical action $M\colon\underline{G}\rightarrow \underline{\operatorname{Aut}}^\mathrm{\mathrm{br}}_{\otimes}(\cZ(\cC))$. Suppose $o_{4}(M)$ vanishes, so there exists a (several) extension $\cC\subseteq \cD$. We would like to compute the fusion rules for this extension. We will assume the extension $\cD$ admits a spherical structure. In the next section we will show how to compute $(\End_{\cZ(\cD)}(I(\mathbbm{1})), \circ, \ast)$.

As we've mentioned, a-priori this is not quite enough to reconstruct the fusion rules, since we don't know  \textit{which} dimension function $d$ our basis is scaled with respect to! Indeed, we do not even know the fusion rules of $\cD$ yet, so trying to determine the possible dimension functions is premature.

However, we \textit{can} use Proposition \ref{fusioncomposition} to determine the \textit{square} of the dimensions (i.e. the paired dimensions) as follows: first determine the canonical basis element acting as the unit under composition, $e_\tu$ which is straightforward. For each $e_{Y}$, there will be a unique element $e_{\bar{Y}}$ such that $e_{Y}\circ e_{\bar{Y}}$ has a coefficient of $e_{\mathbbm{1}}$. This coefficient will be $d^{2}_{Y}=d_{\{Y,\bar{Y}\}}>0$, the canonical paired categorical dimension (see equation \ref{paireddimension}). Recall the positive square root (i.e. the fusion dimension) is denoted $d^{+}_{Y}$. We have $d_{Y}=\gamma_{Y} d^{+}_{Y}$, where $\gamma_{\cdot}\colon \Irr(\cD)\rightarrow \{\pm 1\} $ is determined by (and determines) the spherical structure. More explicitly, to have a spherical structure we need to find a function $\gamma_{\cdot}$ as above such that whenever $Z\prec X\otimes Y$, we have $\frac{\gamma_{X}\gamma_{Y}}{\gamma_{Z}}=T^{Z}_{XY}$ is the pivotal operator (See \cite[Theorem 5.4]{Ba16}). In particular, for such a spherical structure defined by $\gamma_{\cdot}$ to exist, we must have $T^{Z}_{XY}=\pm 1$ is constant whenever $Z\prec X\otimes Y$.

In any case, when we have a spherical structure, the spherical dimensions necessarily satisfy
$$
    \frac{d_{X}d_{Y}}{d_{Z}}=\pm \frac{d^{+}_{X}d^{+}_{Y}}{d^{+}_{Z}}\,.
$$

With the associative algebra $\textbf{}(\operatorname{End}_{\cZ(\cC)}(I(\mathbbm{1})), \circ)\cong (K_{0}(\cC), \cdot)\,.$ and the scaled basis elements described above in hand, we can compute the coefficient of $e_{Z}$ in $e_{X}\circ e_{Y}$, which is $C^{Z}_{XY}=\frac{d_{X}d_{Y}}{d_{Z}}N^{Z}_{XY}$.  We can also determine $\frac{d^{+}_{X}d^{+}_{Y}}{d^{+}_{Z}}$ as described above, and thus the fusion rule can be recovered as
\begin{equation}\label{fusioncoeff}
N^{Z}_{XY}=\left| C^{Z}_{X,Y} \frac{d^{+}_{Z}}{d^{+}_{X}d^{+}_{Y}}\right|\,.
\end{equation}

\medskip

\paragraph{\textbf{Summary of preceding discussion.}}\textit{Suppose we are given a vector space $V$ and two bilinear operations $\circ, \ast$ such that the triple $(V, \circ, \ast)$ is isomorphic to $(K_{0}(\cC), \cdot, \ast_{d})$ for some spherical dimension function $d$. Using the above procedure we can recover the fusion rules without determining $d$. While our method requires the existence of a spherical structure to produce the fusion rules, it does not require the choice of a specific one}

\medskip

\begin{rem}\label{Nospherical}
Above we assumed the existence of a spherical structure to derive our result. In the hypothetical case that there exists a fusion category which admits no spherical structure, the convolution product and composition product still make sense for the Frobenius algebra $I(\tu)$. One can show, however, that instead of recovering the fusion rules using our procedure above, we recover the \textit{signed} fusion rules $\operatorname{Tr}(T^{Z}_{XY})$, where again $T^{Z}_{XY}:\cC(Z,X\otimes Y)\rightarrow \cC(Z, X\otimes Y)$ is the pivotal operator \cite{Ba16}. The operator $T^{Z}_{XY}$ has order $2$ hence its eigenvalues are $\pm 1$. If we let $N^{Z,+}_{XY}$ and $N^{Z,-}_{XY}$ be the dimensions of the $1$ and $-1$ eigenspaces respectively, then $Tr(T^{Z}_{XY})=N^{Z,+}_{XY}-N^{Z,-}_{XY}$ which motivates our terminology. It does not seem to be possible to recover the fusion rules from this information unless we have $T^{Z}_{XY}=\pm 1$ for every triple of simple objects.
\end{rem}

\subsection{\texorpdfstring{$G$}{G}-extensions}\label{Gexts}
In the previous section, we showed how to recover the fusion rules of a fusion category from the algebraic structure of the Lagrangian algebra $I(\mathbbm{1})\in \cZ(\cC)$. Given a $G$-extension $\cC\subseteq \cD$, and have a categorical action $M\colon\underline{G}\rightarrow \underline{\operatorname{Aut}}^\br_{\otimes}(\cZ(\cC))$. The point of this section is to show how to describe the endomorphisms, convolution, and composition product of the canonical Lagrangian algebra for $\cD$ in terms of the data of the Lagrangian algebra for $\cC$ and the categorical action $M$. Our approach is based on the results of \cite{GeNaNi2009}, which realize the Drinfeld center $\cZ(\cD)$ as a certain equivariantization.

Recall from Section \ref{extension theory} that given a $G$-extension $\cC\subseteq \cD$, we have a canonically associated categorical action $M\colon \underline{G}\rightarrow \underline{\operatorname{Aut}}^\br_{\otimes}(Z(\mathcal{C}))$. From \cite[Theorem 3.3]{GeNaNi2009}, we have that the relative center of the extension $Z_{\mathcal{C}}(\cD)$ is a $G$-crossed braided extension of $Z(\mathcal{C})$, whose $G$-action on the trivial component is precisely the canonical action $M$.

Furthermore, we have $Z(\cD)=Z_{\mathcal{C}}(\cD)^{G}$ \cite[Theorem 3.5]{GeNaNi2009}. The forgetful functor $ Z(\cD)\rightarrow \cD$ factorizes as the composition of the forgetful functor $Z(\cD)\rightarrow Z_{\cC}(\cD)$ with $Z_{\cC}(\cD)\rightarrow \cD$, and thus its adjoint factors as a composite of the respective adjoints. However, upon identification of $Z(\cD)$ with $Z_{\mathcal{C}}(\cD)^{G}$, the first forgetful functor (which forgets the half-braiding with all $\cD$ and just remembers the half-braiding with the trivial component) is identified with the equivariant forgetful functor, which simply forgets the equivariant structures on objects.

Let $I_{\cD}\colon\cD\rightarrow Z(\cD)$ and $I_{\cC}\colon \cC\rightarrow Z(\cC)$ the denote the adjoints of the forgetful functors and $I_{G}: Z_{\cC}(\cD)\rightarrow Z_{\cC}(\cD)^{G}\cong Z(\cD)$ as defined above Proposition \ref{equivariantadjunction}.
Then we have 
$$
    I_{\cD}\cong I_{G}\circ I_{\cC}
    \,.
$$
In particular, $I_{\cD}(\mathbbm{1})\cong I_{G}(I_{\cC}(\mathbbm{1}))$. Thus the description of the algebra structure on $I_{G}(A)$ (for an arbitrary algebra $A$) from Section \ref{Equivariantization} provides a concrete realization of the canonical Lagrangian algebra $I_{\cD}(\mathbbm{1})$ 
which is only defined up to an isomorphism.

To compute the fusion rules, our first step is to identify $\operatorname{End}(I_{\cD}(\mathbbm{1}))$ as a vector space. 
We see 
$$
    \cZ(\cD)(I_{\cD}(\mathbbm{1}), I_{\cD}(\mathbbm{1}))\cong \cZ(\cC)^{G}(I_{G}(I_{\cC}(\mathbbm{1})), I_{G}(I_{\cC}(\mathbbm{1})) )\cong \bigoplus_{g\in G} \cZ(\cC)(I_{\cC}(\mathbbm{1}), g(I_{\cC}(\mathbbm{1})))
$$
where the first isomorphism uses the equivalence $Z(\cD)\cong Z_{\cC}(\cD)^{G}$ together with the fact that $I_{D}(\mathbbm{1})\cong I_{G}(I_{\cC}(\mathbbm{1}))$ is contained in the subcategory $Z(\cC)^{G}\subseteq Z_{\cC}(\cD)^{G}$, since $\mathbbm{1}\in \cC$. The second isomorphism uses the model for $I_{G}$ and the adjunction from Proposition \ref{equivariantadjunction}.  Let $L:=I_{\cC}(\tu)$, with multiplication $m$ and comultiplication $m^{\prime}$ as described in Equation \ref{Lagrangianalgmult}. Then using the description of the adjunction to transport the convolution and composition structures from $I_{G}(I_{\cC}(\tu))$, we have
$$
    K_{0}(\cD)\cong\bigoplus_{g\in G} Z(\cC)(L, g^{-1}(L)) \,.
$$
For $a_{g} \in Z(\cC)(L, g^{-1}(L)), b_{h}\in  Z(\cC)(L, h^{-1}(L))$
\begin{equation}
    \label{convolutionproduct}
    a_{g}\ast b_{h}:=\delta_{g,h}\ g^{-1}(m)\circ \rho^{g^{-1}}_{L,L}\circ (a_{g}\otimes b_{g})\circ m'\in Z(\cC)(L, g^{-1}(L)) \,.
\end{equation}
For the composition product, we see
\begin{equation}
    \label{compotionproduct}
    a_{g}\circ b_{h}:= \mu^{L}_{h^{-1},g^{-1}} \circ h^{-1}(a_{g})\circ b_{h}\in Z(\cC)(L, (gh)^{-1}(L))\,.
\end{equation}

\medskip

\begin{rem}Note that while we use $\circ$ for the composition product above, this is an abuse of notation, and is not actually the operation of composition of the morphisms $a_{g}$ and $b_{h}$ in the category $Z(\cC)$. Indeed this doesn't even make sense in general since they have different sources and targets. Rather, this operation corresponds to honest composition of the endomorphisms of $I_{G}(L)$ obtained by applying the adjunction from Proposition \ref{equivariantadjunction}.
\end{rem}

We now put everything together to describe an algorithm for finding the fusion coefficients of a $G$-extension of a fusion category:

\bigskip

\paragraph{\textbf{Algorithm for finding fusion rules of $G$-extension:}}

\begin{enumerate}
    \item 
 First find arbitrary basis $B_{g}$ for $V_{g}:=\cZ(\cC)(L, g^{-1}(L))$ for each $g\in G$,
 where $L=I_\cC(\tu)$ is the canonical Lagrangian algebra in $Z(\cC)$.
 \item
Compute convolution product $\ast$ (see equation \ref{convolutionproduct}) and composition product $\circ$ (see equation \ref{compotionproduct}) in terms of the basis $\bigcup_{g\in G} B_{g}$. 
\item
Find minimal projections of $V_{g}$ with respect to convolution, label them $e_{Y}$. These will correspond to simple objects in the $g$ component of the extension.
\item
Next we want to compute the $C^{Z}_{XY}$ in the sum $e_{X}\circ e_{Y}=\sum_{Z}C^{Z}_{XY}e_{Z}$. To do this, we use $(e_{X}\circ e_{Y})*e_{Z}=C^{Z}_{XY}e_{Z}$.
\item
Next, we note that for each $e_{Y}\in V_{g}$, there is a unique $e_{\bar{Y}}\in V_{g^{-1}}$ such that $C^{\mathbbm{1}}_{Y\bar{Y}}> 0$. Set $d^{+}_{X}=\sqrt{C^{\mathbbm{1}}_{Y\bar{Y}}}$.
\item
We then determine $$N^{Z}_{XY}=\left|C^{Z}_{XY}\frac{d^{+}_{Z}}{d^{+}_{X}d^{+}_{Y}}\right|\,.$$
\end{enumerate}

\subsection{Example: Fusion categories Morita equivalent to \texorpdfstring{$\Vect(A)$}{Vec(A)}}

Let $A$ be an abelian group. Then $\cZ(\operatorname{Vec}(A))\cong \operatorname{Vec}(\hat A\times A,q)$, where $q(\varphi,a)=\varphi(a)$ is the canonical quadratic form on $\hat A \times A$ and $\hat A =\Hom(A,\CC^\times)$ is the dual group \cite{EtNiOs2010}.

Fusion categories $\cC$ together with a Morita equivalence to $\operatorname{Vec}(A)$ are described by Lagrangian algebras in $Z(\operatorname{Vec}(A))$. The Lagrangian algebra is precisely $I_{\cC}(\tu)$.

Lagrangian algebras in $ \operatorname{Vec}(\hat A\times A,q)$ correspond precisely to Lagrangian subgroups $L\le (\hat{A}\times A,q)$ \cite{DaSi2017-2}. By definition, these are precisely the subgroups with $|L|=|A|$ such that $q|_{L}=1$.
By \cite[Proposition 10.3]{EtNiOs2010}, these are in bijective correspondence with subgroups $H\le A$ together with alternating  bicharacters (which are, alternatively, in bijective correspondence with elements of $H^{2}(H, \mathbbm{C}^{\times})$).

Given $H\leq A$
and $b\in \operatorname{Alt}(H\times H,\CC^\times)$
define
$$
L_{h,b} = \{ (\varphi,h)\in \hat A\times A : \varphi\restriction H=b(h,\,\cdot\,)\}
$$
and consider the Lagrangian subgroup
$$
    L_{H,b}=\bigcup_{h\in H}L_{h,b}\,.
$$
then it follows that $(H,b)\mapsto L_{H,b}$
is a one-to-one correspondence between pairs 
$(H,b)$ as above and Lagrangian subgroups 
$L\leq (\hat A\times A,q)$.
Namely, given $L\leq(\hat A\times A,q) $ Lagrangian, define $H$ by $\{1\}\times H = L\cap (\{1\}\times A)$ and 
$b(h,k) =\varphi_h(k)$ for some $(\varphi_h,h)\in L$.

Now, for arbitrary $a\in \hat{A}\times A$, let $\chi_{a}$ be the character on $\hat{A}\times A$ defined by 
$$
    \chi_{a}(b):= \frac{q(ba)}{q(b)q(a)}\,.
$$
Suppose we have a homomorphism $\pi\colon G\to O(\hat{A}\times A,q)$, i.e.\ a homomorphism $\pi\colon G\rightarrow \Aut(\hat{A}\times A)$
 such that $q(g(a))=q(a)$, where by abuse of notation we denote $g(\slot)=\pi(g)(\slot)$. Let $\omega\colon G\times G\rightarrow \widehat{A}\times A $ be a 2-cocycle with respect to this homomorphism, i.e. 
$$
    \omega_{g,hk}g(\omega_{h,k})=\omega_{g,h}\omega_{gh,k}\,.
$$
Then this data defines a braided categorical action 
$$
    \pi^{\omega}\colon\underline{G}\rightarrow \underline{\operatorname{Aut}}^{\br}_{\otimes}(\cC(\hat{A}\times A, q))\,,
$$
where the element $g$ acts by $\pi(g)$ in the obvious way as a strict monoidal functor on $\operatorname{Vec}(\widehat{A}\times A)$. We again abuse notation and use $g(\cdot)$ to refer to the functor $\pi(g)$. To define the tensorator of the categorical action we use the monoidal natural isomorphisms 
$$
    \mu^{a}_{g,h}:=\chi_{\omega_{g,h}}(gh(a))1_{gh(a)}\colon g(h(a))=gh(a)\rightarrow gh(a)
    \,.
$$
That this is a categorical action follows from the general theory of \cite{EtNiOs2010}. However, for the sake of completeness we give a direct verification.

We need to verify for all $a\in \hat{A}\times A$, $g,h,k\in G$
$$
    \mu^{a}_{gh,k}\mu^{k(a)}_{g,h}=\mu^{a}_{g,hk}g(\mu^{a}_{h,k})
$$
Using our definition of $\mu^{a}_{g,h}$, this becomes
\begin{equation}\label{tensorator}
  \chi_{\omega_{gh,k}}(ghk(a))\chi_{\omega_{g,h}}(ghk(a))=\chi_{\omega_{g,hk}}(ghk(a))\chi_{\omega_{h,k}}(hk(a))\,.  
\end{equation}
But since $g$ preserves $q$ we have $\chi_{a}(g(b))=\chi_{g^{-1}(a)}(b)$. Thus we take the left hand side of equation \ref{tensorator}, and we compute
\begin{align*}
    \chi_{\omega_{gh,k}}(ghk(a))\chi_{\omega_{g,h}}(ghk(a))&=\chi_{ghk^{-1}(\omega_{gh,k})}(a)\chi_{ghk^{-1}(\omega_{g,h})}(a)\\
    &=\chi_{ghk^{-1}(\omega_{gh,k}\omega_{g,h})}(a)\\
    &=\chi_{ghk^{-1}(\omega_{g,hk}g(\omega_{h,k}))}(a)\\
    &=\chi_{\omega_{g,hk}}(ghk(a))\chi_{\omega_{h,k}}(hk(a))\,
\end{align*}
\noindent as desired. It turns out every braided categorical action on $\operatorname{Vec}(\hat{A}\times A, q)$ is equivalent to one of this form \cite{EtNiOs2010}.
 
Let $L\le \hat{A}\times A$ be a Lagrangian subgroup, and by an abuse of notation, let $L$ also denote the corresponding Lagrangian algebra.
 
 Then as an object, $L=\bigoplus_{a\in L} a$. The multiplication is given by 
 
 $$m:=\frac{1}{\sqrt{|A|}}\bigoplus_{a,b} m_{a,b}: \bigoplus_{a,b\in L} a\otimes b \rightarrow \bigoplus_{c\in L} c,$$ where 
$$ 
    m_{a,b}=1_{a\otimes b}
    \,.
$$
The Frobenius comultiplication is defined similarly. 
\begin{thm}\label{pointedcase} Let $\pi\colon G\rightarrow \operatorname{Aut}(\hat{A}\times A, q)$ be a group homomorphism and $\omega$ an $\hat{A}\times A$-valued 2-cocycle, and consider the categorical action constructed from this data as described above. Let $L$ be a Lagrangian subgroup and set $L_{g}:=L\cap g^{-1}(L)$. Then the simple objects in the $G$-graded component of any corresponding extension of $\operatorname{Vec}(\hat{A}\times A, q)_{L}$ (if it exists) are indexed by irreducible characters $\alpha\in \widehat{L_{g}}$. For $ \alpha\in \widehat{L_{g}}, \beta\in \widehat{L_{h}}, \gamma\in \widehat{L_{gh}}$ we have
    $$
        N^{\gamma}_{\alpha \beta}=\delta_{\alpha\beta\chi_{\omega_{h^{-1},g^{-1}}}|_{L_{g}\cap L_{h}}, \gamma|_{L_{g}\cap L_{h}}} \frac{ |L_{g}\cap L_{h}|\sqrt{|A|}}{\sqrt{|L_{g}|\, |L_{h}|\, |L_{gh}|}}\,.
    $$
\end{thm}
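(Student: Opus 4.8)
The plan is to run the algorithm of Section \ref{Gexts} verbatim for the action $\pi^\omega$ and the Lagrangian algebra $L$, carrying out every step explicitly inside the pointed braided category $\cZ(\cC)=\operatorname{Vec}(\hat A\times A,q)$. First I would identify the underlying vector spaces. Since every simple of $\operatorname{Vec}(\hat A\times A,q)$ is invertible and $g$ acts by the permutation $\pi(g)$, the object $g^{-1}(L)$ is simply the sum of the invertible simples indexed by the subgroup $g^{-1}(L)\le \hat A\times A$. Hence $V_g:=\cZ(\cC)(L,g^{-1}(L))$ has the basis $\{1_a\}_{a\in L_g}$ of identity morphisms on the common summands, indexed exactly by $L_g=L\cap g^{-1}(L)$, which already pins down the grading data.

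Next I would compute the convolution product of equation \eqref{convolutionproduct} on $V_g$. Because each $\pi(g)$ is a strict monoidal functor (so the functor-tensorator $\rho^g$ is trivial) and $m,m'$ are the normalized (co)multiplications of $L$ recorded just before the theorem, a direct calculation gives $1_a\ast 1_b=\tfrac1{|A|}1_{ab}$. Thus $(V_g,\ast)$ is a rescaling of the group algebra $\CC[L_g]$, and its minimal idempotents are $e_\alpha=\tfrac{|A|}{|L_g|}\sum_{a\in L_g}\overline{\alpha(a)}\,1_a$ for $\alpha\in\widehat{L_g}$. This realizes the asserted bijection between the simples of the $g$-component and $\widehat{L_g}$, and I would record the convolution unit $|A|\,1_{\tu}=\sum_\alpha e_\alpha$ for later use.

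Then I would compute the composition product of equation \eqref{compotionproduct}. Using strictness again together with the explicit tensorator $\mu^a_{g,h}=\chi_{\omega_{g,h}}(gh(a))\,1$, one finds $1_p\circ 1_{p'}=\delta_{p,h(p')}\,\chi_{\omega_{h^{-1},g^{-1}}}(p')\,1_{p'}$ in $V_{gh}$. Substituting the idempotents and then pairing against $e_\gamma$ via $(e_\alpha\circ e_\beta)\ast e_\gamma=C^\gamma_{\alpha\beta}e_\gamma$ collapses $C^\gamma_{\alpha\beta}$ to the sum over an intersection subgroup of a single character built from $\gamma,\alpha,\beta$ and $\chi_{\omega_{h^{-1},g^{-1}}}$; character orthogonality then produces the Kronecker delta equating restrictions of $\gamma$ and $\alpha\beta\chi_{\omega_{h^{-1},g^{-1}}}$, times the cardinality of that subgroup. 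Finally, computing $C^{\tu}_{\alpha\bar\alpha}$ shows the fusion dimension $d^+_\alpha=\sqrt{|A|/|L_g|}$ depends only on the grade $g$; feeding $C^\gamma_{\alpha\beta}$, $d^+_\alpha$, $d^+_\beta$, $d^+_\gamma$ into \eqref{fusioncoeff} is precisely what generates the factor $\tfrac{|L_g\cap L_h|\sqrt{|A|}}{\sqrt{|L_g|\,|L_h|\,|L_{gh}|}}$, the square-root shape being forced by the per-grade constancy of $d^+$.

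The main obstacle is the bookkeeping in these last two steps: faithfully tracking the cocycle twist $\chi_\omega$ and the relabelings by $g$ and $h$, and reducing the raw character sum to the symmetric closed form in the statement. Here I expect to lean repeatedly on the identity $\chi_a(g(b))=\chi_{g^{-1}(a)}(b)$ (valid since each $g$ preserves $q$), on the $2$-cocycle relation for $\omega$, and on the equality $|L\cap g^{-1}(L)|=|L\cap g(L)|$, which gives $|L_g|=|L_{g^{-1}}|$ and underlies the computation of $d^+$. A change of summation variable of the form $s=h(p')$ will be needed to transport the $\alpha$-evaluation onto the correct intersection subgroup and to present the delta in the stated form; getting the inverses, conjugations, and the precise intersection subgroup to line up is the delicate part, whereas the structural method itself is an immediate application of the algorithm.
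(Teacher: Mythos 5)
Your proposal is correct and follows essentially the same route as the paper: the paper's proof is exactly the algorithm of Section \ref{Gexts} run on the basis $\{1_a\}_{a\in L_g}$, with $1_a\ast 1_b=\frac{1}{|A|}1_{ab}$, idempotents $e_\alpha$ built from characters of $L_g$, the composition product twisted by $\chi_{\omega_{h^{-1},g^{-1}}}$, character orthogonality yielding the Kronecker delta and the factor $|L_g\cap L_h|$, and $d^{+}_{\alpha}=\sqrt{|A|/|L_g|}$ feeding into \eqref{fusioncoeff}. The only point to watch is the one you already flag: the paper writes the composition sum directly over $a\in L_g\cap L_h$, implicitly absorbing the relabeling by $h$ that your $\delta_{p,h(p')}$ makes explicit, so your planned change of variables is exactly what reconciles the two presentations.
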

\begin{proof}
    Note that $\cC(\hat{A}\times A, q)_{L}\cong \Vect(\hat L, \mu)$ for some 3-cocycle $\mu\in Z^{3}(\hat L, \CC^{\times})$. All these categories are pseudo-unitary, hence we can apply our algorithm to any extension.
 
    First we compute the convolution structure. A basis for $\operatorname{Vec}(\hat{A}\times A)(L, g^{-1}(L))$ is given by 
    $$
        \{ 1_{a}\}_{a\in L\cap g(L)}\,, \qquad
        1_{a}\ast 1_{b}=\frac{1}{|A|} 1_{ab}\,.
    $$
    Let $\alpha, \beta\in \widehat{L_{g}} $ be irreducible characters. Then we have the standard formula from character theory
    $$
        \sum_{a\in L_{g}} \alpha(a)\beta(a^{-1}b)=\delta_{\alpha,\beta}|L_{g}| \alpha(b)\,.
    $$
    Thus we may define
    $$
        e_{\alpha}=\frac{|A|}{|L_{g}|}\bigoplus_{a\in L_{g}} \alpha(a) 1_{a}
    $$
    and hence
    $$
        e_{\alpha}\ast e_{\beta}=\delta_{\alpha, \beta} e_{\alpha}\,.
    $$
    For $\alpha\in \widehat{L_{g}}, \beta\in \widehat{L_{h}}, \gamma\in \widehat{L_{gh}}$ we compute
    $$ 
        (e_{\alpha}\circ e_{\beta})\ast e_{\gamma}=\frac{|A|^{2}}{|L_{g}| |L_{h}| |L_{gh}| }\bigoplus_{b\in L_{gh}}\left(\sum_{a\in L_{g}\cap L_{h}} \chi_{\omega_{h^{-1},g^{-1}}}(a)\alpha(a)\beta(a)\gamma(a^{-1}b)   \right)1_{b}
        =C^{\gamma}_{\alpha \beta}e_{\gamma}\,.
    $$
    We know the coefficient of $1_{1}$ in $e_{\gamma}$ is precisely $\frac{|A|}{|L_{gh}|}$.

    Using the character formula
    $$
        \sum_{a\in L_{g}\cap L_{h} } \chi_{\omega_{h^{-1},g^{-1}}}(a)\alpha(a)\beta(a)\gamma(a^{-1}b)=|L_{g}\cap L_{h}|\ \delta_{\alpha\beta\chi_{\omega_{h^{-1},g^{-1}}}} |_{L_{g}\cap L_{h}}, \gamma|_{L_{g}\cap L_{h}}\ \gamma(b)
    $$
    and comparing coefficients of $1_{1}$ via our above expression, we obtain
    $$
        C^{\gamma}_{\alpha \beta}=\frac{|L_{g}\cap L_{h}| |A|}{|L_{g}| |L_{h}|}\ \delta_{\alpha\beta \chi_{\omega_{h^{-1},g^{-1}}}|_{L_{g}\cap L_{h}}, \gamma|_{L_{g}\cap L_{h}}}\,.
    $$
    To find the multiplicities, we see the positive dimensions satisfy 
    $$d^{+}_{\alpha}=\sqrt{\frac{|A|}{|L_{g}|}}$$ and therefore
    \begin{equation*}
        N^{\gamma}_{\alpha \beta}=\delta_{\alpha\beta \chi_{\omega_{h^{-1},g^{-1}}}|_{L_{g}\cap L_{h}}, \gamma|_{L_{g}\cap L_{h}}} \frac{ |L_{g}\cap L_{h}|\sqrt{|A|}}{\sqrt{|L_{g}| |L_{h}| |L_{gh}|}}\,.\qedhere
    \end{equation*}
\end{proof}

We remark that these formulas can be applied to derive the fusion rules for \textit{reflection fusion categories} \cite{EtGal2018} in the case when the trivial component of the category (which is an elementary abelian p-group) has trivial 3-cocycle associator.

\subsection{Example: Fusion categories with center tensor equivalent to \texorpdfstring{$\Vect(B)$}{Vect(B)}}
We can slightly generalize the former example. 
Let $A$ be an abelian group.
We want to consider the following kind of Lagrangian extensions of $A$. 
Let $B$ be another abelian group with $|B|=|A|^2$ and  $b\colon B\times B\to\CC^\times$
a bicharacter, such that $q\colon B\to \CC^\times$ defined by $q(x)=b(x,x)$ is a non-degenerate quadratic form and that there is an embedding $\hat A\hookrightarrow B$ with $q|_{\hat A}\equiv 1$. 
Note that \cite[Lemma 4.4]{LiNg2014} implies that the modular tensor category $\cC(B,q)$ is monoidally equivalent to $\Vect(B)$.
The Lagrangian subgroup $\hat A\leq B$ gives a Lagrangian algebra $L$ in $\cC(B,q)$.
We have that $\cC(B,q)_L$ is tensor equivalent to $\Vect(A,\mu)$ for some $\mu\in H^3(A,\CC^\times)$ and $\cC(B,q)$ is braided equivalent to $Z(\Vect(A,\mu))$.
For $a \in B$ let $\chi_a$ to be the character on $B$ defined by
$$
    \chi_a(g):=\frac{q(ag)}{q(a)q(g)}=b(a,g)b(g,a)\,. 
$$
Suppose as above, we have a homomorphism $\pi\colon G\to O(B,q)$ and $\omega\colon G\times G\to B$ a 2-cocycle with respect to this homomorphism as above. By replacing $\hat A \times A$ by $B$ we get a categorical action of $G$ on $\cC(B,q)$ as before. All the arguments are the same replacing $\hat A\times A$ by $B$, thus we get the slightly more general version of Theorem \ref{pointedcase}:
\begin{thm}
    \label{pointedcase2}  
    With the above notation, set  $L_{g}:=L\cap g^{-1}(L)$ and
    $A_g=A/\{a\in A~|~\ev_a|_{L_g}=\id_{L_g}\} $
   . Then the simple objects in the $G$-graded component of the corresponding extension of $\Vect(A,\mu)\cong \cC(B, q)_{L}$ (if it exists) are indexed by irreducible characters $\alpha\in A_g$. For $ \alpha\in A_g, \beta\in A_h, \gamma\in A_{gh}$ we have
    $$
        N^{\gamma}_{\alpha \beta}=\delta_{\alpha\beta\chi_{\omega_{h^{-1},g^{-1}}}|_{L_{g}\cap L_{h}}, \gamma|_{A_{g}\cap A_{h}}} \frac{ |A_{g}\cap A_{h}|\sqrt{|A|}}{\sqrt{|A_{g}|\, |A_{h}|\, |A_{gh}|}}\,.
    $$
\end{thm}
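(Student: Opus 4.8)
The plan is to run the proof of Theorem~\ref{pointedcase} essentially verbatim, systematically replacing $\hat A\times A$ by $B$ and translating the indexing data through Pontryagin duality. The key observation is that nothing in the earlier argument used any feature of $\hat A\times A$ beyond its being a finite abelian group carrying a nondegenerate quadratic form $q$, an embedded Lagrangian subgroup $L$, and a $q$-preserving twisted $G$-action built from $(\pi,\omega)$. All of these are available for $B$ by hypothesis, so the computations transport mechanically; the genuine content of the generalization lies in correctly re-expressing the combinatorial answer in terms of the group $A$ for which $\cC(B,q)_{L}\cong\Vect(A,\mu)$.

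First I would record the structural inputs. By \cite[Lemma 4.4]{LiNg2014} the modular category $\cC(B,q)$ is monoidally equivalent to $\Vect(B)$, hence pointed; since $\cC(B,q)_{L}\cong\Vect(A,\mu)$ is pointed it is pseudo-unitary, and Proposition~\ref{pseudounitaryext} then guarantees that every $G$-extension is pseudo-unitary, which is precisely the hypothesis needed to legitimize the fusion-rule algorithm of Section~\ref{Gexts}. The categorical action is the one attached to $(\pi,\omega)$ with tensorator $\mu^{a}_{g,h}=\chi_{\omega_{g,h}}(gh(a))\,1_{gh(a)}$, now interpreted inside $\cC(B,q)\cong\Vect(B)$.

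Next I would set up the convolution algebra. A basis for $\cC(B,q)(L,g^{-1}(L))$ is $\{1_{a}\}_{a\in L_{g}}$ with $L_{g}=L\cap g^{-1}(L)\le L=\hat A$, and since the Lagrangian algebra still has dimension $|L|=|\hat A|=|A|$ the normalization yields $1_{a}\ast 1_{b}=\tfrac{1}{|A|}1_{ab}$ exactly as before. The crucial translation is the indexing set: because $L=\hat A$, the perfect pairing $A\times\hat A\to\CC^\times$ restricts to a surjection $A\to\widehat{L_{g}}$ with kernel $\{a\in A:\ev_{a}|_{L_{g}}=\id_{L_{g}}\}$, giving a canonical isomorphism $\widehat{L_{g}}\cong A_{g}$. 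Thus a character $\alpha\in\widehat{L_{g}}$ is the same datum as an element $\alpha\in A_{g}$, with $|A_{g}|=|L_{g}|$, and the overlap group $A_{g}\cap A_{h}$ is to be read as $\widehat{L_{g}\cap L_{h}}\cong A/(L_{g}^{\perp}+L_{h}^{\perp})$, so that $|A_{g}\cap A_{h}|=|L_{g}\cap L_{h}|$. Under this dictionary the minimal convolution idempotents $e_{\alpha}=\tfrac{|A|}{|L_{g}|}\bigoplus_{a\in L_{g}}\alpha(a)1_{a}$ satisfy $e_{\alpha}\ast e_{\beta}=\delta_{\alpha,\beta}e_{\alpha}$ by orthogonality of characters, producing the asserted indexing of the simple objects of the $g$-component by $A_{g}$.

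Finally I would compute the composition product and extract the fusion rules. Applying the formula~\eqref{compotionproduct} with tensorator $\mu^{L}_{h^{-1},g^{-1}}$ together with the character-orthogonality identity gives $(e_{\alpha}\circ e_{\beta})\ast e_{\gamma}=C^{\gamma}_{\alpha\beta}e_{\gamma}$ with $C^{\gamma}_{\alpha\beta}=\tfrac{|L_{g}\cap L_{h}|\,|A|}{|L_{g}|\,|L_{h}|}\,\delta_{\alpha\beta\chi_{\omega_{h^{-1},g^{-1}}}|_{L_{g}\cap L_{h}},\,\gamma|_{L_{g}\cap L_{h}}}$, and comparing the coefficient of $1_{1}$ in $e_{\bar\alpha}$ yields the positive dimension $d^{+}_{\alpha}=\sqrt{|A|/|L_{g}|}=\sqrt{|A|/|A_{g}|}$. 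Substituting into \eqref{fusioncoeff} and rewriting every cardinality through the duality dictionary produces the stated formula. The step I expect to demand the most care is exactly this bookkeeping of the identification isomorphisms: verifying that the constraint on restrictions of characters to $L_{g}\cap L_{h}$ is faithfully recorded on $A_{g}\cap A_{h}$, and confirming that the nontrivial associator $\mu$ of $\Vect(A,\mu)$ never intervenes, since the convolution and composition products depend only on the data $(B,q,\omega)$ and the Lagrangian algebra $L$, and not on the Morita datum $\mu$.
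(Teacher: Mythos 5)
Your proposal is correct and follows essentially the same route as the paper, whose own proof of Theorem \ref{pointedcase2} consists precisely of the remark that all arguments of Theorem \ref{pointedcase} go through verbatim with $\hat A\times A$ replaced by $B$. Your additional care in spelling out the Pontryagin-duality dictionary $\widehat{L_g}\cong A_g$ (and the reading of $A_g\cap A_h$ as $\widehat{L_g\cap L_h}$) is a useful elaboration of a point the paper leaves implicit, but it is not a different method.
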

We remark that theorem applies to any $\cC$ whose center $Z(\cC)$ is tensor equivalent to $\Vect(B)$. Namely, in this case $Z(\cC)$ is braided equivalent 
to $\cC(B,q)$ where $q$ is a quadratic form on $B$ which comes from a bicharacter $b$ on $B$ by \cite[Lemma 4.4]{LiNg2014}. Then $I(\tu)\in\cC(B,q)$ gives a Lagrangian subgroup $L$ and $\cC(B,q)_L$ is tensor equivalent to $\Vect(A,\mu)$ for some $\mu\in H^3(A,\CC^\times)$, where $A=\hat L$. In particular, it applies to $\cC=\Vect(A,\mu)$ where $A$ is of odd order and $\mu$ is a ``soft'' cocycle.
Here the subgroup of ``soft'' cohomology classes is
$$ 
    H^3(A,\CC^\times)_\mathrm{ab} =
    \left\{[\omega]\in H^3(A,\CC^\times) ~\middle|~ 
    \prod_{\sigma\in S_3}\omega(\sigma(x),\sigma(y),\sigma(z))^{\operatorname{sign}\sigma} =1 \text{ for all }x,y,z\in A\right\}\,.
$$
By \cite[Corollary 3.6]{MaNg2001}, $Z(\Vect(A,\mu))$ is pointed for every 
$[\mu]\in  H^3(A,\CC^\times)_\mathrm{ab}$ thus braided equivalent to some $\cC(B,q)$.
If $A$ is odd, define a bicharacter $b$ on $B$ by 
$$b(g,h)=\left(\frac{q(gh)}{q(g)q(h)}\right)^{\frac{\operatorname{Exp}(G)+1}2}\,$$
then $q(g)=b(g,g)$. 
Thus for every $A$ odd abelian group and $[\mu]\in H^3(A,\CC^\times)_\mathrm{ab}$ the category $\Vect(A,\mu)$ arise in the above way.

\section{Examples from \texorpdfstring{$G$}{G}-crossed extensions of modular categories}

We turn our attention to the case $\cC$ is modular. Then $\cZ(\cC)$ is braided equivalent
to $\cC\boxtimes\rev{\cC}$, and the forgetful functor is the functor $X\boxtimes Y\mapsto X\otimes Y\in \cC$.
We first give a description of $L=I(\tu)$
in $\cC\boxtimes\rev{\cC}$. By \cite[Section 2.2]{KoRu2008}, we can describe the canonical Lagrangian algebra as follows:

As an object
$$
    L = \bigoplus_{X\in\Irr(\cC)} X\boxtimes \bar X\,.
$$
The multiplication is given by 
\begin{align*}
    m_{0} =\bigoplus_{X,Y,Z\in \Irr(\cC)}\sum_{i}
     \tikzmath{
         \begin{scope}[yscale=-1]
	    \draw[thick] (0,-1) node [above] {$Z$}--(0,0) .. controls (1,1) ..
         (1,1.5) node [below] {$Y$};
  		\draw[thick] (0,0) node [right] {$\,i$} .. controls (-1,1) .. 
        (-1,1.5) node [below] {$X$};
		  \mydotw{(0,0)};
		  \end{scope}
  	}
    \boxtimes
    \tikzmath{
        \begin{scope}[yscale=-1]
	    \draw[thick] (0,-1) node [above] {$\bar Z$}--(0,0) .. controls (1,1) ..
         (1,1.5) node [below] {$\bar Y$};
	  	\draw[thick] (0,0) node [right] {$\,\check i$} .. controls (-1,1) .. 
        (-1,1.5) node [below] {$\bar X$};
  		\mydotw{(0,0)};
  		\end{scope}
	  }
    \,,
\end{align*}
where $\{i\}\subseteq \cC(X\otimes Y, Z) $ consists of a basis for $\cC(X\otimes Y, Z)$, and $\{\check{i}\}\subseteq \cC(\bar{X}\otimes \bar{Y}, \bar{Z}) $ is a basis 
given
by
$$
    \tikzmath{
        \begin{scope}[yscale=-1]
	    \draw[thick] (0,-2) node [above] {$\bar Z$}--(0,0) .. controls (1,1) ..
         (1,1.5) --(1,2.5) node [below] {$\bar Y$};
	  	\draw[thick] (0,0) node [right] {$\,\check i$} .. controls (-1,1) .. 
        (-1,1.5) --(-1,2.5)node [below] {$\bar X$};
  		\mydotw{(0,0)};
  		\end{scope}
	  }
    =
    \tikzmath{
    \begin{scope}[yscale=-1]
	  	\draw[thick] (0,0) node [left] {$i^\dagger$} .. controls (-1,-1) .. 
        (-1,-1.5) arc(180:360:1) -- (1,1.5) node [below] {$\bar X$};
	    \draw[ultra thick,white,double] (0.2,-0.2).. controls (1,-1.5) .. (2,-1.5)
        arc (270:360:1);
	    \draw[thick] (-2,-3) node[above]{$\bar Z$} 
          --(-2,0) arc (180:0:1) .. controls (1,-1.5) ..
         (2,-1.5) arc (270:360:1) --(3,1.5) node [below] {$\bar Y$};
  		\mydotw{(0,0)};
  		\end{scope}
	  }\,.
$$
where $\{i^\dagger\}$ is the dual basis with respect to the composition pairing $\cC(Z,X\otimes Y)\otimes \cC(X\otimes Y, Z)\rightarrow \mathbbm{C}$, $f\otimes g\mapsto \langle f,g\rangle\in \mathbbm{C}$ where $\langle f,g\rangle 1_{Z}=g\circ f$

The unit of the algebra is given by 
$$\iota_{0}=1_{\tu\boxtimes \tu}.$$

Now, by Lemma \ref{isomorphism}, for the purposes of computing the based algebra $(\End_{\cC\boxtimes\rev{\cC}}(L),\circ)$, we can replace this algebra by any isomorphic algebra which also admits a symmetric Frobenius algebra with our desired normalization. In particular, we can choose one for which the normalized Frobenius comultiplication is easier to compute. 

Thus we consider the same object $L$, but with multiplication
\begin{align}\label{newmult}
    m&:=\frac{1}{\sqrt{\dim\cC}}\bigoplus_{X,Y,Z\in \Irr(\cC)}\frac{\sqrt{d_{X}d_{Y}}}{\sqrt{d_{Z}}}\sum_{i} i\boxtimes \check{i}
\end{align}
 
\noindent with $\check{i}$ defined as above, and unit
$$
    \iota =\sqrt{\dim(\cC)} 1_{\tu \boxtimes \tu}\,.
$$ 
Then the map 

$$\psi:=\bigoplus_{Z\in \Irr(\cC)}\sqrt{\frac{d_{Z}}{\dim(\cC)}} 1_{Z\boxtimes \bar{Z}}\in \End_{\cC \boxtimes \cC^{\text{rev}}}(L)
$$
is an automorphism of the object $A$, but satisfies $\psi \circ m=m_{0}\circ (\psi\otimes \psi)$ and $\psi\circ \iota=\iota_{0}$, and thus $(A,m, \iota)\cong (A,m_{0}, \iota_{0})$.

Furthermore, we can more easily compute the correctly normalized Frobenius comultiplication to be given by 
\begin{align}\label{newcomult} m^{\prime}:=\frac{1}{\sqrt{\dim\cC}}\bigoplus_{X,Y,Z\in \Irr(\cC)}\sqrt{\frac{d_{X}d_{Y}}{d_{Z}}}\sum_{i }i^\dagger\boxtimes \check{i}^{\dagger}
\end{align}
where $\{i^\dagger\}\subseteq \cC(Z,X\otimes Y)$ is dual to $\{i\}$ with respect to the composition pairing, and $\{\check{i}^{\dagger}\}$ is dual to $\{\check{i}\}$ with respect to the composition pairing. Defining the counit
$$
    \epsilon:=\sqrt{\dim(\cC)}1_{\tu\boxtimes \tu}
$$
\noindent we obtain a symmetric Frobenius algebra structure on $(A, m, \iota)$ with the correct normalization as desired.

We note in the modular case, the story is considerably simplified for two reasons. First, we have $(\End(A), \circ)\cong \Fun(\Irr(\cC))$ as algebras, where the latter is the algebra of complex valued functions  on the set $\Irr(\cC)$ with pointwise multiplication. The identification is via 
$$  
    f\in \Fun(\Irr(\cC))\mapsto \bigoplus_{X\in \Irr(\cC)} f(x)1_{X\boxtimes \bar{X}}\,.
$$
With this notation, an easy computation gives
\begin{equation}\label{Formula for convolution product 1}
(f*g)(Z)=\frac{1}{\dim\cC}\sum_{X,Y\in \Irr(\cC)}f(X)g(Y) \frac{d_{X} d_{Y}}{d_{Z}} N^{Z}_{XY}\,.
\end{equation}
In terms of the basis $\{1_{X\boxtimes \bar{X}}\ :\ X\in \Irr(\cC)\}$ we have 
$$ 
    1_{X\boxtimes \bar{X}}\ast 1_{Y\boxtimes \bar{Y}}=\frac{1}{\dim\cC}\sum_{Z\in \Irr(\cC)} N^{Z}_{XY} \frac{d_{X}d_{Y}}{d_{Z}} 1_{Z\boxtimes \bar{Z}}\,.
$$
We will use both expressions in the sequel based on convenience.

\begin{lem}
    \label{lem:MTCformula}
    Let $\cC$ be a modular tensor category, then 
    $$\frac{1}{\dim\cC}\sum_{X,Y\in\Irr(\cC)}N_{X,Y}^Z
    (d_V S_{X,V})(d_W S_{Y,W})=\delta_{V,W}\ d_VS_{Z,V}$$
    for all $V\in\Irr(\cC)$. 
\end{lem}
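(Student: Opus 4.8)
The plan is to reduce the identity to the Verlinde formula together with the orthogonality (``sewing'') relation for the $S$-matrix, both of which are already in hand from the proof of Proposition~\ref{genverlinde}. First I would rewrite the fusion coefficient as a three-point number, $N_{X,Y}^Z = N_{X,Y,\bar Z}$, and record the dimension as an $S$-matrix entry, $d_V = S_{\tu,V}$. Applying the Verlinde formula
$$N_{X,Y,\bar Z} = \frac{1}{\dim\cC}\sum_{U\in\Irr(\cC)}\frac{S_{X,U}S_{Y,U}S_{\bar Z,U}}{S_{\tu,U}}$$
and substituting into the left-hand side lets me pull the sums over $X$ and $Y$ all the way inside, so that the expression factors as
$$\frac{d_V d_W}{(\dim\cC)^2}\sum_{U\in\Irr(\cC)}\frac{S_{\bar Z,U}}{S_{\tu,U}}\Bigl(\sum_{X}S_{X,U}S_{X,V}\Bigr)\Bigl(\sum_{Y}S_{Y,U}S_{Y,W}\Bigr).$$

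The key step is then the column orthogonality of $S$. From $S^2 = \dim\cC\cdot C$ one reads off $\sum_{X}S_{X,U}S_{X,V} = \dim\cC\cdot\delta_{U,\bar V}$, and likewise for the $Y$-sum against $W$. The two Kronecker deltas collapse the sum over $U$: they force $U = \bar V = \bar W$, which produces the factor $\delta_{V,W}$ and evaluates everything at $U = \bar V$. After cancelling the powers of $\dim\cC$, this leaves $\delta_{V,W}\,d_V^2\,S_{\bar Z,\bar V}/S_{\tu,\bar V}$.

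It remains to simplify the surviving $S$-entries using duality symmetry. The relation $S^2 = \dim\cC\cdot C$ together with $C^2 = \id$ gives $SC = CS$, hence $S_{\bar X, Y} = S_{X,\bar Y}$; applying this twice yields $S_{\bar Z,\bar V} = S_{Z,V}$, while $S_{\tu,\bar V} = d_{\bar V} = d_V$. Substituting these gives $\delta_{V,W}\,d_V^2\,S_{Z,V}/d_V = \delta_{V,W}\,d_V S_{Z,V}$, which is exactly the right-hand side. I expect the only genuine obstacle to be bookkeeping: keeping the factors of $\dim\cC$ and the bars on the duals consistent throughout, since a single misplaced conjugate would spoil the final cancellation. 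No new categorical input is needed beyond the $S$-matrix identities already assembled in Section~\ref{ssec:ModularCategories}.
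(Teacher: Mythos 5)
Your proof is correct and rests on exactly the same two ingredients as the paper's: the Verlinde formula (equivalently, that $S$ diagonalizes the fusion rules) and the orthogonality relation $S^2=\dim\cC\cdot C$. The only cosmetic difference is that the paper applies the diagonalization identity once together with $N_{X,Y}^Z=N_{\bar X,Z}^Y$ and a single orthogonality sum, whereas you expand the full Verlinde sum over an auxiliary index $U$ and collapse it with two orthogonality relations; both routes are equally valid and of the same length.
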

\begin{proof}
We have
$$
    \frac 1{d_W}S_{X,W}S_{Y,W}=\sum_{Z\in\Irr(\cC)}
    N_{X,Y}^ZS_{Z,W}
$$
and therefore
\begin{align*}
    \sum_{X,Y\in\Irr(\cC)}N_{X,Y}^Z(d_{V}S_{X,V})(d_{W}S_{Y,W})
    &=d_{V}d_{W}\sum_{X\in\Irr(\cC)}S_{X,V}
    \sum_{Y\in\Irr(\cC)} N_{\bar X,Z}^Y S_{Y,W}
    \\&=
    d_{V}\sum_{X\in\Irr(\cC)}S_{X,V}S_{\bar X,W} S_{Z,W}
    \\&=\delta_{V,W}\cdot \dim\cC\cdot d_{V}\ S_{Z,V}\,.
\end{align*}
In the last equality, we used the property of modular data $SCS=CS^{2}=\dim \cC \cdot I$, where $C=(\dim\cC)^{-1}\cdot S^2$ 
 is the  charge conjugation matrix 
given by $C_{X,Y} =\delta_{X, \bar{Y}}$
and $I$ is the identity matrix
 (see \cite[Theorem 3.1.7]{BaKi2001}).
\end{proof}
\begin{prop}\label{convolutionbasis}
    The set $\{e_V\}_{V\in \Irr(\cC)}$ with
    $$
         e_{V}=
        \bigoplus_{X\in \Irr(\cC)}\frac{d_V}{d_X}S_{X,V}\cdot 1_{X\boxtimes\bar X}
    $$
    forms a complete set of minimal idempotents for $(\End_{\cZ(\cC)}(L), \ast)$.
\end{prop}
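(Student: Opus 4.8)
The plan is to verify the relation $e_V\ast e_W=\delta_{V,W}\,e_V$ by a single direct computation, and then to extract minimality and completeness from the general structure of $(\End_{\cZ(\cC)}(L),\ast)$ as a commutative semisimple algebra. Under the identification $(\End_{\cZ(\cC)}(L),\circ)\cong\Fun(\Irr(\cC))$ recorded above, the element $e_V$ corresponds to the function $Z\mapsto \frac{d_V}{d_Z}S_{Z,V}$. First I would substitute the functions $f(X)=\frac{d_V}{d_X}S_{X,V}$ and $g(Y)=\frac{d_W}{d_Y}S_{Y,W}$ into the convolution formula \eqref{Formula for convolution product 1}. The denominators $d_X$ and $d_Y$ cancel against the factor $d_Xd_Y$ already present in \eqref{Formula for convolution product 1}, leaving
\[
(e_V\ast e_W)(Z)=\frac{1}{d_Z}\cdot\frac{1}{\dim\cC}\sum_{X,Y\in\Irr(\cC)}N^Z_{XY}\,(d_VS_{X,V})(d_WS_{Y,W})\,.
\]
The inner sum is exactly the quantity evaluated in Lemma \ref{lem:MTCformula}, which equals $\delta_{V,W}\,d_VS_{Z,V}$. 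Hence $(e_V\ast e_W)(Z)=\delta_{V,W}\frac{d_V}{d_Z}S_{Z,V}=\delta_{V,W}\,e_V(Z)$, showing that the $e_V$ are mutually $\ast$-orthogonal idempotents.

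Next I would observe that each $e_V$ is nonzero: evaluating at $Z=\tu$ gives $e_V(\tu)=d_V S_{\tu,V}=d_V^2\neq 0$. To conclude that $\{e_V\}$ is a \emph{complete} set of \emph{minimal} idempotents, I would invoke the fact (recorded before Lemma \ref{isomorphism} via \cite{BiDa2018}) that $(\End_{\cZ(\cC)}(L),\ast)$ is commutative and semisimple, hence isomorphic to $\CC^n$ with $n=|\Irr(\cC)|$; note that $n$ is also the linear dimension of $\End_{\cZ(\cC)}(L)\cong\Fun(\Irr(\cC))$. Mutually orthogonal nonzero idempotents are automatically linearly independent: applying $\ast\,e_W$ to a relation $\sum_V c_V e_V=0$ isolates $c_W e_W=0$, forcing $c_W=0$. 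Thus the $n$ elements $\{e_V\}$ form a basis, and in $\CC^n$ any family of $n$ mutually orthogonal nonzero idempotents must coincide with the standard primitive idempotents. Therefore $\{e_V\}_{V\in\Irr(\cC)}$ is precisely the complete set of minimal idempotents.

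I do not anticipate a genuine obstacle here. The only computation of substance is the passage through \eqref{Formula for convolution product 1}, and that collapses immediately once the spherical-dimension factors cancel; all of the real work—the Verlinde/sewing identity for the modular $S$-matrix—has been front-loaded into Lemma \ref{lem:MTCformula}. Consequently the present proposition is essentially a corollary of that lemma, packaging the orthogonality relation together with a one-line linear-algebra argument for completeness and minimality.
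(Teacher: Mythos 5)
Your proof is correct and follows essentially the same route as the paper: substitute into the convolution formula, cancel the dimension factors, and apply Lemma \ref{lem:MTCformula} to obtain $e_V\ast e_W=\delta_{V,W}e_V$. The only cosmetic difference is in how completeness is finished off --- the paper notes that $\{e_V\}$ is a basis directly from the invertibility of $S$, while you deduce it from orthogonality, nonvanishing, and a dimension count; both are fine.
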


\begin{proof}
Note $\{e_V\}_{V\in\Irr(\cC)}$ forms a basis of $\End_{\cZ(\cC)}(L)$
since $S$ is invertible.

Furthermore, we have 
\begin{align*}
e_{V}\ast e_{W}&=\frac{1}{\dim\cC}\sum_{X,Y,Z}\frac{d_{V}}{d_{X}}S_{X,V}\frac{d_{W}}{d_{Y}}S_{Y,W}\frac{d_{X}d_{Y}}{d_{Z}} N^{Z}_{XY} 1_{Z\boxtimes \bar{Z}}\\
&=\frac{1}{\dim\cC}\sum_{X,Y,Z}N^{Z}_{XY} (d_{V}S_{X,V})(d_{W}S_{Y,W})\frac{1}{d_{Z}}  1_{Z\boxtimes \bar{Z}}\\
&=\delta_{V,W} \sum_{Z\in \Irr(\cC)} \frac{d_{V}}{d_{Z}} S_{Z,V} 1_{Z\boxtimes \bar{Z}}=\delta_{V,W}\ e_{V}\,.\qedhere
\end{align*}
\end{proof}
As a consistency check, we compute the composition structure
\begin{align*}
    e_X\circ e_Y&=\bigoplus_{V\in\Irr(\cC)}
    \frac{d_Xd_Y}{d_V^2}S_{X,V}S_{Y,V}\cdot 1_{V\boxtimes \bar V}
    \\&=\bigoplus_{V\in\Irr(\cC)}\sum_{Z\in\Irr(\cC)}
    N_{X,Y}^Z\frac{d_Xd_Y}{d_Z}\frac{d_Z}{d_V}S_{Z,V}
    \cdot 1_{V\boxtimes \bar V}
    \\&=\sum_{Z\in\Irr(\cC)}N_{X,Y}^Z\frac{d_Xd_Y}{d_Z}\cdot
    e_Z\,.
\end{align*}

This analysis was purely of the canonical Lagrangian algebra, with no categorical action. We will now study a particular type of categorical action, which is associated to a \textit{$G$-crossed braided} extension of $\cC$ rather than an ordinary extension.

Recall for non-degenerate fusion categories that there is a monoidal functor $\pi\colon\underline{\operatorname{Aut}}^\mathrm{\br}_{\otimes}(\cC)\rightarrow \underline{\operatorname{Aut}}^\mathrm{\br}_{\otimes}(\cZ(\cC))=\underline{\operatorname{Aut}}^\mathrm{\br}_{\otimes}(\cC\boxtimes \rev\cC)$, where the braided $\cC$ autoequivalence acts on the second factor $1\boxtimes \rev\cC\subseteq \cC\boxtimes \rev\cC $. 
Furthermore, there is an equivalence $\partial\colon\underline{\operatorname{Aut}}^\mathrm{\br}_{\otimes}(\cC)\cong  \underline{\operatorname{Pic}}(\cC)$ defined via $\alpha$ induction \cite{DN}. Then by \cite{DN}, the following diagram commutes up to monoidal natural isomorphism 
$$
\begin{tikzcd}
  \underline{\operatorname{Aut}}^\mathrm{\br}_{\otimes}(\cC) \arrow{d}{\partial}\arrow{r}{\pi}
    &
   \underline{\operatorname{Aut}}^\mathrm{\br}_{\otimes}(\cZ(\cC))\arrow{d}
\\
    \underline{\operatorname{Pic}}(\cC)\arrow{r}{\text{Forget}}
    &
    \underline{\operatorname{BrPic}}(\cC)
 \end{tikzcd}
$$
where the arrow on the right is the canonical equivalence from Section \ref{extension theory}. 
 
Consider a $G$-graded extension of a modular category $\cC$. Lifts (indicated by the dotted line)
$$
    \begin{tikzcd}
         & \underline{\operatorname{Aut}}^\mathrm{\br}_{\otimes}(\cC)\cong \underline{\operatorname{Pic}}(\cC)\arrow{d} \\
         \underline{G}\arrow[ur, dashed]\arrow{r} & \underline{\operatorname{Aut}}^\mathrm{\br}_{\otimes}(\cZ(\cC))\cong \underline{\operatorname{BrPic}}(\cC)
    \end{tikzcd}
$$
of the canonically associated categorical action (indicated by the horizontal sold line)
correspond to equivalence classes of \textit{G-crossed braidings} on the $G$-extension with compatible $G$-grading \cite[Theorem 7.2]{10.1093/imrn/rnab133}.

For $g\in \underline{\operatorname{Aut}}^{\br}_{\otimes}(\cC)$, define $\Fix_{g}=\{X\in \Irr(\cC)\ :\ g(X)\cong X\}$. Choose an isomorphism $\gamma^{g}_{X}:\bar{X}\cong g^{-1}(\bar{X})$ for each $X\in \Fix_{g}=\Fix_{g^{-1}}$ (note $\bar{X}\in \Fix_{g}$ if and only if $X\in \Fix_{g}$). Then we have an isomorphism (recall that $g$ is acting only on the second factor of $\cC\boxtimes \cC^{rev}$)
\begin{align*}
    \Fun(\Fix_{g})&\cong \cZ(\cC)(L, g^{-1}(L))
    \\
    f&\mapsto \bigoplus_{X\in \Fix_{g}} f(X) \cdot 1_{X}\boxtimes \gamma^{g}_{X}\,.
\end{align*}

Our first step is to determine a formula for the convolution product on each component $\Fun(\Fix_{g})\cong \cZ(\cC)(L, g^{-1}(L))$. Utilizing our choice of $\gamma^{g}_{X}$ we can define a linear operator
\begin{align*}
    U^{g,Z}_{X,Y}\colon \cC(\bar{X}\otimes \bar{Y}, \bar{Z})&\rightarrow \cC(\bar{X}\otimes \bar{Y}, \bar{Z})
    \\
    \alpha&\mapsto (\gamma^{g}_{Z})^{-1}\cdot g^{-1}(\alpha)\cdot \rho^{g^{-1}}_{\bar{X},\bar{Y}}\cdot (\gamma^{g}_{X}\otimes \gamma^{g}_{Y})\,.
\end{align*}
where $\rho^{g^{-1}}_{\bar{X},\bar{Y}}:g^{-1}(\bar{X})\otimes g^{-1}(\bar(Y))\rightarrow g^{-1}(\bar(X)\otimes \bar(Y))$ are the structure morphisms of ``tensorator" of the monoidal functor $g^{-1}$ (see Section \ref{Equivariantization}). Then define 
$$
    T^{g,Z}_{X,Y}:=\Tr(U^{g,Z}_{X,Y})\,.
$$

Applying the formula for convolution \eqref{convolutionproduct} together with Equations \ref{newmult} and \ref{newcomult} for the multiplication and comultiplication on $L$, we obtain the following proposition.
\begin{prop}
    \label{prop:convolution}
    Let $\{\gamma^{g}_{X}: \bar{Z}\rightarrow g^{-1}(\bar{X})\}_{X\in \Fix_{g}}$ be a family of isomorphisms and $X,Y\in \Fix_{g}$. Then 
    $$
        (1_{X}\boxtimes\gamma^{g}_{X})\ast( 1_{Y}\boxtimes \gamma^{g}_{Y})=\frac{1}{\dim\cC}\sum_{Z\in \Fix_{g}} \frac{d_{X}d_{Y}}{d_{Z}}T^{g,Z}_{X,Y} \cdot 1_{Z}\boxtimes\gamma^{g}_{Z}\,.
    $$ 
\end{prop}

Note that both $U^{g,Z}_{X,Y}$ and $T^{g,Z}_{X,Y}$ depend on the choices of the $\gamma^{g}_{X}$. However, for a fixed $g$ for each $X$, the possible choices of isomorphism $\bar{X}\cong g^{-1}(\bar{X})$ form a torsor over $\mathbbm{C}^{\times}$. We see modifying a given $\gamma^{g}_{X}$ by a scalar $\lambda_{X}$, the quantity $U^{g,Z}_{X,Y}$ (and hence $T^{g,Z}_{X,Y}$) are multiplied by the factor $\frac{\lambda_{X} \lambda_{Y}}{\lambda_{Z}}$. The basis elements $1_{X}\boxtimes \gamma^{g}_{X}$ are multiplied by the scalar $\lambda_{X}$, and we see that the modifications cancel.

\subsection{Cyclic permutation actions}
Let $\cC$ be a modular tensor category and
$G\hookrightarrow S_n$ a $G$-set. 
More precisely, we choose a natural number $n$, a finite group $G$, and a faithful action of $G$ on the set $\{1,\ldots, n\}$ which we can naturally see as a monomorphism $G\hookrightarrow S_n$. 
Then there is a strict action of $G$ on $\cC^{\boxtimes n}$ by permutations. 
We denote a $G$-crossed braided extension of $\cC^{\boxtimes n}$ by $\cC\wr G$
 as in \cite{Tu2010} 
 noting that it is not necessarily unique, but always exists by \cite{GaJo2018} (see Remark \ref{sphericalremark}). Delaney has recently given an algorithm for computing the fusion rules of arbitrary permutation crossed extensions \cite{D2019}. In this section we apply our methods to obtain a closed formula in the special case of the standard cyclic subgroup  $\ZZ/n\ZZ \cong \langle (12\cdots n)\rangle \leq S_n$,
$1\mapsto (12\cdots n)$.

Suppose $g\in \mathbbm{Z}/n\mathbbm{Z}$. Let us examine the equivalence class of simple objects in $\mathcal{C}^{\boxtimes n}$ invariant under $g$. Note that for any set with a $\mathbbm{Z}/n\mathbbm{Z}$ action, an element is left fixed by $g$ if and only it is fixed by the entire cyclic subgroup generated by $g$. Let $o(g)$ denote the order of $g$. We define the \textit{co-order} of $g$ by $c(g):=\frac{n}{o(g)}$. Let $1$ denote the standard generator of  $\mathbbm{Z}/n\mathbbm{Z}$ (we will use additive notation here for finite cyclic groups). From the elementary theory of finite cyclic groups $\langle g \rangle=\langle c(g)\rangle $. Thus an element is fixed by $g$ if and only if it is fixed by $c(g)$. Since $c(g)$ acts on $X=X_{1}\boxtimes \dots \boxtimes X_{n}$ by $$c(g)(X)=X_{n-c(g)+1}\boxtimes X_{n}\boxtimes X_{1}\boxtimes \dots \boxtimes X_{n-c(g)},$$

then (since $n-c(g)=(o(g)-1)c(g)$) then $c(g)(X)=X$ if and only if $X_{i}=X_{kc(g)+i}$ for $1\le i\le c(g)$ and $0\le k\le o(g)-1$. We conclude that the simple object $X$ fixed by $g$ up to isomorphism are of the form

$$X=(X_{1}\boxtimes X_{2}\boxtimes \dots \boxtimes X_{c(g)})^{\boxtimes o(g)}$$

\noindent where $X_{1}, \cdots, X_{c(g)}\in \Irr(\cC)$ are arbitrary. Then we have the following claim:

\begin{lem} The set of minimal convolution idempotents is given by $\{f_{g, X}\}_{X\in \Irr(\cC^{\boxtimes m})}$, where 
$$
    f_{g,X=X_1\boxtimes\cdots\boxtimes X_m}=
    \bigoplus_{Y=Y_1\boxtimes \cdots \boxtimes Y_m}
    \frac{d_X(\dim\cC)^{n-c(g)}}{d_Y^{o(g)}}
    \underbrace{\prod_{i=1}^{c(g)}
    S_{X_i,Y_i}}_{=S_{X,Y}}\cdot
    1_{Y^{\boxtimes o(g)}\boxtimes \bar Y^{\boxtimes o(g)}}\,.
$$
\end{lem}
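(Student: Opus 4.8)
The plan is to verify directly that the proposed elements $f_{g,X}$ are minimal idempotents for the convolution product by reducing everything to the modular case already handled in Proposition~\ref{convolutionbasis}, together with an analysis of how the permutation $g$ acts on the fixed simple objects. First I would fix $g\in\ZZ/n\ZZ$ of order $o=o(g)$ and co-order $c=c(g)$, so that $n=oc$, and recall that the $g$-fixed simple objects of $\cC^{\boxtimes n}$ are precisely those of the form $Y^{\boxtimes o(g)}$ with $Y=Y_1\boxtimes\cdots\boxtimes Y_c\in\Irr(\cC^{\boxtimes c})$, since the cyclic permutation $(12\cdots n)^c$ cyclically shifts the $o$ blocks of length $c$. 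The key computational input is the convolution formula for the component $\cZ(\cC^{\boxtimes n})(L,g(L))\cong\Fun(\Fix_g)$, which by the previous proposition reads
$$
1_{X\boxtimes\bar X}\ast 1_{X'\boxtimes\bar{X}'}
=\frac{1}{\dim(\cC^{\boxtimes n})}\sum_{Z\in\Fix_g}
\frac{d_X d_{X'}}{d_Z}\,T^{g,Z}_{X,X'}\,1_{Z\boxtimes\bar Z}\,.
$$
The main task is therefore to compute the trace $T^{g,Z}_{X,X'}$ for $g$-fixed simples $X=W^{\boxtimes o}$, $X'=W'^{\boxtimes o}$, $Z=V^{\boxtimes o}$, where $W,W',V\in\Irr(\cC^{\boxtimes c})$.

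I would compute $T^{g,Z}_{X,X'}=\Tr(U^{g,Z}_{X,X'})$ by choosing the canonical isomorphisms $\gamma$ coming from the block structure and tracking how $g$ permutes the tensor factors in $\cC(X\otimes X',Z)=\cC^{\boxtimes n}(W^{\boxtimes o}\otimes W'^{\boxtimes o},V^{\boxtimes o})$. The operator $U^{g,Z}_{X,X'}$ acts on this Hom-space by cyclically permuting the $o$ copies of the block-level Hom-space $\cC^{\boxtimes c}(W\otimes W',V)$ (composed with the braiding/tensorator data), so its trace picks out a diagonal contribution; a standard trace-of-a-cyclic-permutation argument should collapse the $o$ copies to a single factor, yielding $T^{g,Z}_{X,X'}$ proportional to the block-level fusion data $N^{V}_{WW'}$ (or equivalently, via the block Verlinde formula, to $S_{W,\cdot}S_{W',\cdot}$ evaluated appropriately). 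Concretely I expect $T^{g,Z}_{X,X'}=N^{V}_{WW'}$ for the block category $\cC^{\boxtimes c}$, since the cyclic operator on $o$ identical copies has trace equal to the dimension of the single copy. Substituting this into the convolution formula and using $\dim(\cC^{\boxtimes n})=(\dim\cC)^{n}$, $d_{W^{\boxtimes o}}=d_W^{o}$, reduces the convolution on $\Fun(\Fix_g)$ to a rescaled version of the modular convolution product on $\cC^{\boxtimes c}$.

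Once this reduction is in place, I would apply Proposition~\ref{convolutionbasis} to the modular category $\cC^{\boxtimes c}$: its minimal convolution idempotents are $e_V=\bigoplus_{W}\frac{d_V}{d_W}S_{W,V}\,1_{W\boxtimes\bar W}$, with $S_{W,V}=\prod_{i=1}^{c}S_{W_i,V_i}$ the factorized $S$-matrix of $\cC^{\boxtimes c}$. Pulling these back along the block-to-fixed-object correspondence $W\mapsto W^{\boxtimes o}$ and matching the normalization constants $(\dim\cC)^{n-c}$ and the exponent $o(g)$ on $d_Y$ in the claimed formula, I would check that $f_{g,X}$ is exactly the image of $e_V$ up to the required rescaling, hence that $f_{g,X}\ast f_{g,X'}=\delta_{X,X'}f_{g,X}$ and that the $f_{g,X}$ span. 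The hard part will be bookkeeping the precise scalar factors in the trace computation of $T^{g,Z}_{X,X'}$---in particular correctly accounting for the quantum-dimension ratios $d_X/d_Z$ and the powers of $\dim\cC$ introduced by passing from the full category $\cC^{\boxtimes n}$ to the block category $\cC^{\boxtimes c}$---and verifying that all these factors conspire to reproduce exactly the stated coefficient $\frac{d_X(\dim\cC)^{n-c(g)}}{d_Y^{o(g)}}$. I do not expect any conceptual obstacle beyond this careful normalization check, since the structural claim (cyclic trace collapses $o$ copies to one) is geometrically transparent.
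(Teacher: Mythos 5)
Your proposal is correct and follows essentially the same route as the paper: the paper reduces to the case where $g$ generates $\ZZ/n\ZZ$ (replacing $\cC$ by $\cC^{\boxtimes c(g)}$ and $n$ by $o(g)$) and then verifies $f_{g,X}\ast f_{g,Y}=\delta_{X,Y}f_{g,X}$ directly from the convolution formula together with Lemma~\ref{lem:MTCformula}, which is the same reduction to the block category and the same $S$-matrix orthogonality you invoke via Proposition~\ref{convolutionbasis}. The one point you make explicit that the paper leaves implicit is the identification $T^{g,Z}_{X,X'}=N^{V}_{WW'}$ via the trace of the cyclic permutation on $o(g)$ identical copies of the block Hom-space, and your justification of it is correct.
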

\begin{proof}
We may assume $g$ is a generator. Otherwise replace $\cC$ by $\cC^{\boxtimes c(g)}$ and replacing $n$ by $o(g)$. Then as above $X=X^{\boxtimes n}_{1}$ and $Y=Y^{\boxtimes n}_{1}$ for $X_{1}, Y_{1}\in \cC$. Since the action is strict, we are free to choose $\gamma^{g}_{X}=1_{\bar{X}}$ for all simples $X\in \Fix_{g}$. Applying Proposition \ref{prop:convolution} to our situation we have

$$
1_{X\boxtimes \bar{X}}\ast 1_{Y\boxtimes \bar{Y}}=\frac{1}{\dim\cC^{\boxtimes n}}\sum_{Z=Z^{\boxtimes n}_{1}\in \Fix_{g}} \frac{d_{X}d_{Y}}{d_{Z}}N^{Z_1}_{X_1,Y_{1}} \cdot 1_{Z\boxtimes \bar{Z}}\,
$$
where $Z=Z^{\boxtimes n}_{1}$. This follows since the term $T^{g,Z}_{X,Y}$ from Proposition \ref{prop:convolution} is computing the trace of the cyclic translate permutation on the space $\cC^{\boxtimes n}(X\otimes Y,Z)=\cC(X_{1}\otimes Y_{1}, Z_{1})^{\otimes n}$. 
But choosing a basis $B\subseteq  \cC(X_{1}\otimes Y_{1}, Z_{1})^{\otimes n}$ the set 
$B^{n}:=\{\beta_{1}\otimes \cdots\otimes \beta_{n}\ :\ \beta_{i}\in B\}$ is a basis for the subspace of $\cC^{\boxtimes n}(X\otimes Y,Z)$ invariant under the permutation action. Thus the trace term $T^{g,Z}_{X,Y}$ is precisely the number of basis elements of $B^{n}$ fixed by $g$. Such an element is completely determined by its first entry $\beta_{1}$, and thus $T^{g,Z}_{X,Y}=\dimCC\cC(X_{1}\otimes Y_{1}, Z_{1})=N^{Z_{1}}_{X_{1},Y_{1}}$.

Expanding further, we see

$$
1_{X\boxtimes \bar{X}}\ast 1_{Y\boxtimes \bar{Y}}=\frac{1}{(\dim\cC)^{n}}\sum_{Z=Z^{\boxtimes n}_{1}\in \Fix_{g}} \frac{d^{n}_{X_1}d^{n}_{Y_1}}{d^{n}_{Z}}N^{Z_1}_{X_1,Y_{1}} \cdot 1_{Z\boxtimes \bar{Z}}\,
$$

Then we have
\begin{align*}
    f_{g,X}
    \ast
    f_{g,Y}
    &=\sum_{U,V,W\in \Irr(\cC)}\frac{d_Xd_Y}{d_U^nd_V^n}S_{X,U}S_{Y,V}
    \frac{(\dim\cC)^{2n-2}}{(\dim\cC)^{n}}
    \frac{d_U^nd_V^n}{d_W^n}
    N_{U,V}^W 
    \cdot
    1_{W^{\boxtimes n}\boxtimes\bar W^{\boxtimes n}}
    \\
        &=\sum_{U,V,W\in \Irr(\cC)}\frac{d_Xd_Y(\dim\cC)^{n-2}}{d_W^n}S_{X,U}S_{Y,V}
    N_{U,V}^W 
    \cdot
    1_{W^{\boxtimes n}\boxtimes\bar W^{\boxtimes n}}
      \\
        &=\sum_{W\in \Irr(\cC)}\frac{d_X(\dim\cC)^{n-1}}{d_W^n} \delta_{X,Y} S_{W,X}
    \cdot
    1_{W^{\boxtimes n}\boxtimes\bar W^{\boxtimes n}}
      \\
        &= \delta_{X,Y}  f_{g,X}\,.\qedhere
\end{align*}
\end{proof}

\begin{thm}\label{fusioncyclic}
    Let $\cC$ be a modular tensor category. 
    Consider a spherical $\ZZ/n\ZZ$-crossed braided permutation extension
    $$\cC\wr \ZZ/n\ZZ=
    \bigoplus_{g\in\ZZ/n\ZZ}(\cC\wr \ZZ/n\ZZ)_g$$ of $\cC^{\boxtimes n}$. 
    Then $\Irr((\cC\wr \ZZ/n\ZZ)_g)=
    \{(-g,X)\}_{X\in\Irr(\cC^{\boxtimes c(g)})}$ with $g\in\ZZ/n\ZZ$
    and fusion rules
    are given by
    \begin{align*}
    N^{(g+h,Z)}_{(g,X),(h,Y)}
    &=
    {}^{k} N_{X'_1,\ldots,X'_{{c(g)}/{p}},
    Y'_1,\ldots,Y'_{{c(h)}/{p}}}^{(p);Z'_1,\ldots, Z'_{{c(g+h)}/{p}}}\,,
    \qquad \text{where} \quad k=\frac{n-c(g)-c(h)-c(g+h)}{2p}+1
    \,.
\end{align*}
Here $g,h\in\ZZ/n\ZZ$,
$X\in\cC^{\boxtimes c(g)}$,
$Y\in\cC^{\boxtimes c(h)}$,
$Z\in\cC^{\boxtimes c(g+h)}$,
$p=\mathop{\mathrm{gcd}}(c(g),c(h))$, and ${}^k N^{(p)}$ indicate the (higher) fusion matrices (see  \eqref{definefusioncoeff}) of $\cC^{\boxtimes p}$.
Furthermore, 
$X'_i,Y'_i,Z'_i\in\cC^{\boxtimes p}$ with $X=X'_1\boxtimes \cdots \boxtimes X'_{c(g)/p}$,
$Y=Y'_1\boxtimes \cdots \boxtimes Y'_{c(h)/p}$, and 
$Z=Z'_1\boxtimes \cdots \boxtimes Z'_{c(h)/p}$.
\end{thm}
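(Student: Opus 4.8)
The plan is to run the six-step algorithm of Section~\ref{Gexts} with the minimal convolution idempotents taken to be the $f_{g,X}$ of the preceding lemma; since these already index $\Irr((\cC\wr\ZZ/n\ZZ)_g)$, the entire content of the theorem is the evaluation of the structure constant $C^{(g+h,Z)}_{(g,X),(h,Y)}$ determined by $(f_{g,X}\circ f_{h,Y})\ast f_{g+h,Z}=C^{(g+h,Z)}_{(g,X),(h,Y)}\,f_{g+h,Z}$, followed by the normalization \eqref{fusioncoeff}. As a first step I would organize the $n$ tensor factors by the joint action of $g$ and $h$: the subgroup $\langle g,h\rangle=\langle\gcd(g,h,n)\rangle\le\ZZ/n\ZZ$ has exactly $p=\gcd(c(g),c(h))$ orbits on $\{1,\dots,n\}$, and bundling one $\cC$ per orbit is what produces the base category $\cC^{\boxtimes p}$ of the statement and splits $X,Y,Z$ into words of lengths $c(g)/p,\,c(h)/p,\,c(g+h)/p$, exactly as in the factorization trick used to prove the preceding lemma.

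The heart of the argument is evaluating the composition product \eqref{compotionproduct}, namely $f_{g,X}\circ f_{h,Y}=\mu^{L}_{h^{-1},g^{-1}}\circ h^{-1}(f_{g,X})\circ f_{h,Y}$, on the support described by the idempotent formula. This is the only place the genuinely extension-theoretic data enter: for the cyclic permutation action the functors $h^{-1}$ act by permuting tensor legs, while the tensorator $\mu$ is assembled from the braiding of $\cC$ used to drag legs past one another. Because $g$, $h$ and $g+h$ are all powers of the $n$-cycle they commute, so their cycles interleave in a transparent way, and tracking this turns the composition into a network of elementary braiding morphisms whose categorical trace I would evaluate against the $S$-matrix. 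Convolving with $f_{g+h,Z}$ via \eqref{convolutionproduct} then closes the diagram into a single scalar, and I expect the iterated sewing identities of Lemma~\ref{lem:MTCformula} and of the proof of Proposition~\ref{genverlinde} to rewrite that scalar as a Moore--Seiberg block dimension \eqref{eq:MooreSeiberg} for $\cC^{\boxtimes p}$ with insertions $X'_i$, $Y'_i$, $\bar Z'_i$.

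Once $C^{(g+h,Z)}_{(g,X),(h,Y)}$ is in $M_{\cC^{\boxtimes p}}$-form, Proposition~\ref{genverlinde} identifies it, up to dimension prefactors, with the higher fusion coefficient ${}^{k}N^{(p)}$ of \eqref{definefusioncoeff}. The genus $k$ I would read off geometrically: the monodromies $g,h,(g+h)^{-1}$ around a thrice-punctured sphere define a degree-$n$ branched cover having $c(g),c(h),c(g+h)$ preimages over the three branch points, and Riemann--Hurwitz applied to each of its $p$ connected components yields precisely $k=\tfrac{n-c(g)-c(h)-c(g+h)}{2p}+1$; the powers of $\dim\cC$ accumulated in the $S$-matrix sum must reproduce the same count. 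Finally I would carry out step~5 to get the fusion dimensions $d^{+}_{(g,X)}$ (equal to $d_X$ times an explicit power of $\dim\cC$ fixed by $c(g)$) and verify that, in \eqref{fusioncoeff}, these exactly cancel the prefactors, so that $N^{(g+h,Z)}_{(g,X),(h,Y)}=\big|C^{(g+h,Z)}_{(g,X),(h,Y)}\,d^{+}_{(g+h,Z)}/(d^{+}_{(g,X)}d^{+}_{(h,Y)})\big|$ collapses to ${}^{k}N^{(p)}$ with no leftover constant, the assumed sphericality guaranteeing all signs are $+1$.

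The main obstacle is the composition computation of the second paragraph: making $\mu^{L}_{h^{-1},g^{-1}}$ and the attendant braidings fully explicit for arbitrary $g,h$, and organizing the resulting traces so that they manifestly reassemble into the Moore--Seiberg sum rather than into some unrecognizable $S$-matrix expression. In particular, checking that the number of free summation labels and the power of $\dim\cC$ conspire to give exactly the genus $k$ above---equivalently, that the algebra reproduces the Riemann--Hurwitz count---is the delicate combinatorial point, and is where the restriction to a \emph{cyclic} group, which makes all the relevant permutations commute and their cycle structure transparent, is essential.
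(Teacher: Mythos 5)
Your outline matches the paper's strategy at every structural point: run the algorithm of Section~\ref{Gexts} on the idempotents $f_{g,X}$, reduce to $p=1$ by bundling the $\langle g,h\rangle$-orbits into $\cC^{\boxtimes p}$, extract $C^{(g+h,Z)}_{(g,X),(h,Y)}$ from $(f_{g,X}\circ f_{h,Y})\ast f_{g+h,Z}$, normalize by $d^{+}_{f_{g,X}}=d_X\dim(\cC)^{(n-c(g))/2}$, and identify the resulting $S$-matrix sum with ${}^kN^{(p)}$ via Proposition~\ref{genverlinde}. However, the step you yourself flag as ``the main obstacle'' --- evaluating the composition product \eqref{compotionproduct} --- is exactly the heart of the proof, and the route you sketch for it (making the braidings in $\mu^{L}_{h^{-1},g^{-1}}$ explicit, organizing a network of elementary braidings, taking categorical traces) is not how it goes and would not be needed. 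The missing idea is a support argument: $f_{g,X}$ and $f_{h,Y}$ are sums of \emph{scalar multiples of identity morphisms} $1_{R\boxtimes\bar R}$ with $R$ fixed by $g$, resp.\ $h$, so their composition is supported on $R$ fixed by both; when $\gcd(c(g),c(h))=1$ this forces $R=W^{\boxtimes n}$ with $W\in\Irr(\cC)$. On such objects the permutation functors and the tensorator act trivially (permuting identical $\boxtimes$-legs of a Deligne product is free and involves no braiding of $\cC$, unlike permuting $\otimes$-legs), so the composition product collapses to pointwise multiplication of the coefficient functions. That single observation is the entire computation; without it your proposal does not actually produce $C^{(g+h,Z)}_{(g,X),(h,Y)}$.

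Two smaller points. First, your Riemann--Hurwitz reading of the genus $k$ is a pleasant heuristic, but in the paper the exponent of $\dim\cC$ simply falls out of the algebra after normalization, and the only thing that needs separate verification is that $\tfrac{n-c(g)-c(h)-c(g+h)}{p}$ is even, which the paper proves by an elementary parity lemma on gcd's rather than by covering-space geometry; you would still need that arithmetic fact to invoke Proposition~\ref{genverlinde} at integer genus. Second, the convolution with $f_{g+h,Z}$ is evaluated in the paper by comparing coefficients of $1_{\tu\boxtimes\tu}$ (i.e.\ the $V=\tu$ component) together with Lemma~\ref{lem:MTCformula}, not by a diagram-closing trace; this is again a purely coefficientwise manipulation. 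So the skeleton is right, but the load-bearing computation is absent and the envisioned substitute for it points in the wrong direction.
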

\begin{proof}
    By Proposition \ref{genverlinde}
    the statement can be written in terms of $S$-matrices $S^{(p)}$ of 
    $\cC^{\boxtimes p}$ as
    \begin{align}
    &N^{(g+h,Z)}_{(g,X),(h,Y)}\nonumber \\
    \\
    &=
    \sum_{W\in \Irr(\cC^{\boxtimes p})}\left(\prod^{\frac{c(g)}{p}}_{i=1} \frac{S^{(p)}_{X^{\prime}_{i},W}}{S^{(p)}_{\tu, W}}\right) \left(\prod^{\frac{c(h)}{p}}_{i=1} \frac{S^{(p)}_{Y^{\prime}_{i},W}}{S^{(p)}_{\tu, W}}\right) \left(\prod^{\frac{c(g+h)}{p}}_{i=1} \frac{S^{(p)}_{Z^{\prime}_{i},W}}{S^{(p)}_{\tu, W}}\right) \left(\frac{\sqrt{\dim\cC}^{p}}{S^{(p)}_{\tu, W}}\right)^{\frac{n-c(g)-c(h)-c(g+h)}{p}}\,.\label{eq:Formula}
    \end{align}
  We first note that we only need to prove the case 
  $p=1$. 
  If $p>1$ then the fusion rules factor through a $\ZZ/\frac np\ZZ$-cyclic permutation extension of $\cC^{\boxtimes p}$ and the formula  is obtained from the $p'=1$ formula by considering
  $n'=n/p$, $\cC'=\cC^{\boxtimes p}$,
  $g'=g/p$, and $h'=h/p$.
  
  Now, let $g,h\in G$ such that $\operatorname{gcd}(c(g),c(h))=1$
  and let $f_{g,X}$ with
  $X\in \Irr(\cC^{\boxtimes c(g)})$ and
  $f_{h,Y}$ with $Y\in \Irr(\cC^{\boxtimes c(h)})$ minimal convolution idempotents.
    Then
    $$
        f_{g,X}\circ f_{h,Y}=\bigoplus_{W\in\Irr\cC}\frac{d_{X}d_{Y}}{d^{{2n}}_{W}}(\dim\cC)^{2n-c(g)-c(h)} S_{X,W^{\boxtimes {c(g)}}} S_{Y, W^{\boxtimes {c(h)}}}
        \cdot
        1_{W^{\boxtimes n}\boxtimes \bar{W}^{\boxtimes n}}\,.
    $$
    Here we have used the fact that the only terms from $f_{g,X}$ and $f_{h,Y}$ that contribute to the composition are the coefficients of $1_{R\boxtimes \bar{R}}$, where $R\in \Irr(\cC^{\boxtimes n})$ is of the form $R=(R_{1}\boxtimes R_{2}\boxtimes \dots \boxtimes R_{c(g)})^{\boxtimes o(g)}=(R^{\prime}_{1}\boxtimes R^{\prime}_{2}\boxtimes \dots \boxtimes R^{\prime}_{c(h)})^{\boxtimes o(h)}$, with $R_{i}, R^{\prime}_{i}\in \Irr(\cC)$. However, since $p=(c(g),c(h))=1$, we must have $R=W^{\boxtimes n}$ for $W\in \Irr(\cC).$, which gives the above expression.

    Now let $Z=Z_{1}\boxtimes \dots \boxtimes Z_{c(g+h)}$, so that $f_{g+h, Z}$ is a minimal convolution idempotent. 
    Then we have
    \begin{align*}
        &(f_{g,X}\circ f_{h,Y})\ast f_{g+h,Z} \\
        &=\bigoplus_{W,U\in \Irr(\cC)} \frac{d_{X}d_{Y}d_{Z}}{d^{2n}_{W}d^{n}_{U}}(\dim\cC)^{3n-c(g)-c(h)-c(g+h)} S_{X,W^{\boxtimes {c(g)}}} S_{Y, W^{\boxtimes {c(h)}}}
        S_{Z,U} (1_{W^{\boxtimes n}\boxtimes \bar W^{\boxtimes n}}\ast 1_{U^{\boxtimes n} \boxtimes \bar U^{\boxtimes n}})
        \\
        &=\bigoplus_{V} \sum_{W,U\in \Irr(\cC)} \frac{d_{X}d_{Y}d_{Z}}{d^{2n}_{W}d^{\frac{n}{c(g+h)}}_{U}}(\dim\cC)^{l}S_{X,W^{\boxtimes {c(g)}}} S_{Y, W^{\boxtimes {c(h)}}}S_{Z,U} \frac{d^{{n}}_{W} d^{o(g+h)}_{U}}{d^{o(g+h)}_{V}}N^{V}_{W^{\boxtimes c(g,h)},U}\cdot 1_{V^{\boxtimes}\boxtimes\bar{V}^{\boxtimes}}\,,
    \end{align*}
     where $l:=2n-c(g)-c(h)-c(g+h)$. Comparing coefficients for $V=1$, we obtain the equation
    \begin{align*}
        &\sum_{W,U\in \Irr(\cC)} \frac{d_{X}d_{Y}d_{Z}}{d^n_{W}}(\dim\cC)^{2n-c(g)-c(h)-c(g+h)}S_{X,W^{\boxtimes {c(g)}}} S_{Y, W^{\boxtimes {c(h)}}} S_{Z,\bar{W}^{c(g+h)}}
    \\&=C^{(g+h,Z)}_{(g,X),(h,Y)} d^{2}_{Z}(\dim\cC)^{n-c(g+h)}
    \end{align*}
 hence
    $$
        C^{(g+h,Z)}_{(g,X),(h,Y)}=\sum_{W\in \Irr(\cC)} \frac{d_{X}d_{Y}}{d_{Z}d^n_{W}}(\dim\cC)^{n-c(g)-c(h)}
        S_{X,W^{\boxtimes {c(g)}}} S_{Y, W^{\boxtimes {c(h)}}}S_{Z,\bar{W}^{c(g+h)}}\,.
    $$
    We now see
    $$
        d^{+}_{f_{g,X}}=d_{X}(\dim\cC)^{\frac{n-c(g)}{2}}\,.
    $$
    Hence normalizing we obtain
    $$
        N^{(g+h,Z)}_{(g,X),(h,Y)}=\sum_{W\in \Irr(\cC)} \frac{\sqrt{\dim\cC}^{n-c(g)-c(h)-c(g+h)}}{d^n_{W}}S_{X,W^{\boxtimes {c(g)}}} S_{Y, W^{\boxtimes {c(h)}}}S_{Z,\bar{W}^{c(g+h)}}\,.
    $$
    The right hand side factorizes into the expression \eqref{eq:Formula}.

\end{proof}
Note that in the case that  the genus
${\textstyle\frac{n-c(g)-c(h)-c(g+h)}{2p}+1}$ vanishes, we have that
$$
N^{(g+h,Z)}_{(g,X),(h,Y)}=
\dimCC\cC^{\boxtimes p}
\big(
X'_1\otimes \cdots \otimes X'_\frac{c(g)}{p}\otimes 
Y'_1\otimes \cdots\otimes Y'_\frac{c(h)}{p},
Z'_1\otimes \cdots \otimes Z'_\frac{c(g+h)}{p}
\big)
$$
for example, we recover a well-known special 
case 
$$
N^{(0,Z_1\boxtimes\cdots\boxtimes Z_n)}_{(g,\tu),(-g, \tu)}
= N_{Z_1,\ldots,Z_n}
$$
first observed for multiplicities in $n$-interval inclusions 
\cite{KaLoMg2001} and later for fusion rules in cyclic permutations \cite{LoXu2004} of conformal nets.

\bigskip

In order to relate the expression to the \emph{generalized Verlinde formula} we need that $g$ is an integer 
and therefore require the exponent ${\textstyle\frac{n-c(g)-c(h)-c(g+h)}{p}}$ to be even. 
As a consistency check, we verify the following lemma which verifies that this is indeed the case.
\begin{lem}
    $\frac{n-(m,n)-(k,n)-(m+k,n)}{((m,n),(k,n))}$ is even.
\end{lem}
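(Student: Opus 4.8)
The plan is to reduce the statement to a clean parity identity by first dividing out the greatest common divisor in the denominator, and then to settle that identity by a short case analysis on the parity of the modulus.

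First I would set $p=((m,n),(k,n))$. Since gcd is associative, $p=(m,k,n)$, so $p$ divides each of $m,k,n$; write $m=pm'$, $k=pk'$, $n=pn'$. Applying the identity $(pa,pb)=p(a,b)$ to the three gcds in the numerator (and using $m+k=p(m'+k')$), the quantity in the lemma becomes
$$
\frac{n-(m,n)-(k,n)-(m+k,n)}{((m,n),(k,n))} = n' - (m',n') - (k',n') - (m'+k',n')\,.
$$
Thus it suffices to prove that $N-(a,N)-(b,N)-(a+b,N)$ is even for all integers $a,b,N$ (applied with $a=m'$, $b=k'$, $N=n'$). I would stress that this division by $p$ is exactly what is needed: merely showing the numerator is even is insufficient, since when $p$ is even the numerator is automatically even while the quotient could a priori be odd.

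For the parity statement I would argue by cases on $N$, using that $(x,N)$ is even if and only if both $x$ and $N$ are even. If $N$ is odd, then each of the three gcds divides $N$ and is therefore odd, so their sum is odd and $N$ minus an odd number is even. If $N$ is even, then $(x,N)$ is odd precisely when $x$ is odd, so the number of odd terms among $(a,N),(b,N),(a+b,N)$ equals the number of odd integers among $a,b,a+b$; this count is always even, since the parity of $a+b$ is the sum of the parities of $a$ and $b$ (the count is $0$ or $2$). Hence the sum of the three gcds is even, and $N$ minus an even number is even.

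The whole argument is elementary; the only point requiring care is the initial reduction by $p$, which is what lets the case $p$ even go through. Once the problem is reduced to the identity $N\equiv (a,N)+(b,N)+(a+b,N)\pmod 2$, the case analysis closes it at once, and it is worth remarking that the coprimality $(m',k',n')=1$ coming from the reduction is in fact not needed for the parity conclusion.
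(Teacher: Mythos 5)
Your proof is correct, and in the main case it takes a genuinely different route from the paper's. Both arguments begin the same way: divide everything by $p=((m,n),(k,n))$ (you make the reduction $(pa,pb)=p(a,b)$ explicit, which is a nice touch, as is your remark that evenness of the numerator alone would not suffice when $p$ is even), and both then split on the parity of $n/p$, with the odd case handled identically (three odd divisors of an odd number subtracted from it). The divergence is in the even case: the paper first proves that the three ratios $(m,n)/p$, $(k,n)/p$, $(m+k,n)/p$ are pairwise coprime, and then runs two subcases (at least one even, hence exactly one even; versus all odd, which is shown to be impossible by a parity argument on $m/p$ and $k/p$). You bypass the pairwise-coprimality lemma entirely by observing that for even $N$ the gcd $(x,N)$ has the same parity as $x$, so the number of odd terms among the three gcds equals the number of odd integers among $a$, $b$, $a+b$, which is $0$ or $2$. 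Your version is shorter and more elementary, needing only the single parity observation; the paper's pairwise-coprimality fact is stronger structural information (it pins down that at most one of the three ratios is even) but is not needed for the parity conclusion, exactly as you note at the end.
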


\begin{proof}
First we claim the set of numbers $\{\frac{(m,n)}{((m,n), (k,n))},\frac{(k,n)}{((m,n), (k,n))},\frac{(m+k,n)}{((m,n), (k,n))}\}$ is pairwise coprime. Note that $\frac{(k,n)}{((m,n),(k,n))}$ and $\frac{(m,n)}{((m,n),(k,n))}$ are coprime. Suppose $l|\frac{(m+k,n)}{((m,n),(k,n))}$ and $l|\frac{(m,n)}{((m,n),(k,n))}$. Then 
$((m,n),(k,n))l|m+k$ and $((m,n),(k,n))l|m$ thus $((m,n),(k,n))l|k$. Because also $((m,n),(k,n))l|n$ we have $((m,n),(k,n))l|(k,n)$.
Thus $l|\frac{(k,n)}{{((m,n),(k,n))}}$. But $\frac{(k,n)}{{((m,n),(k,n))}}$ is coprime to $l|\frac{(m,n)}{{((m,n),(k,n))}}$, thus $l$ must be $1$. A similar argument applies switching $m$ and $k$, and we get that these numbers are coprime.

\medskip

We break the rest of the proof into parity cases.
\begin{enumerate}
    \item Suppose $\frac{n}{((m,n), (k,n))}$ is odd. Then since the other three terms in the expression must divide this one, they are also odd so we see that the whole expression is even (odd-odd-odd-odd=even).
    \item
    Suppose $\frac{n}{((m,n), (k,n))}$ is even and at least one of the other three terms is even. Then the other 2 must be odd since they are pairwise co-prime. Thus the whole expression is even, since (even-even-odd-odd)=even.
    \item
    Suppose $\frac{n}{((m,n), (k,n))}$ is even, but the other three terms are all odd. We claim this is not possible. Indeed, suppose $\frac{(m,n)}{((m,n), (k,n))},\frac{(k,n)}{((m,n), (k,n))}$ are both odd. Then $\frac{m}{((m,n), (k,n))},\frac{k}{((m,n), (k,n))}$ must both be odd hence $\frac{m+k}{(((m,n), (k,n))}$ must be even, thus $2| \frac{m+k}{(((m,n), (k,n))}$ and $2| \frac{n}{(((m,n), (k,n))}$ by hypothesis, so $\frac{(m+k, n)}{(((m,n), (k,n))}$ is even. Thus this cannot occur.
\end{enumerate} 
\end{proof}

\begin{rem}\label{sphericalremark} There is a subtle point about our argument. The results of \cite{GaJo2018} guarantee the existence of such extensions as a G-crossed braided fusion category (indeed, in the cyclic case this follows from \cite{EtNiOs2010} since $H^{4}(\ZZ/n\ZZ, \mathbbm{C}^{\times})=1$), but make no claims as to the existence of a spherical structure.  The computation of the fusion rules explicitly assumes the existence of a spherical structure on the extension $\cC\wr \ZZ/n\ZZ$ of $\cC^{\boxtimes n}$. It is widely expecting that there exist a spherical structure on these extensions provided $\cC$ itself admits one. If we assume $\cC$ is pseudo-unitary, then by Proposition \ref{pseudounitaryext}, the extension will be and hence admits a spherical structure. 
\end{rem}

\bibliography{bibliography.bib}

\newpage
\begin{appendix}

\begin{section}{Fusion rules for \texorpdfstring{$\mathbbm{Z}/4\mathbbm{Z}$}{Z/4Z} permutation extensions of Fib}
    Let $\cC$ be the Fibonacci category with $\Irr(\cC)=\{\tu,\tau\}$ with $\tau\otimes \tau \cong \tu\oplus \tau$ and $\cD=\cC\wr \ZZ/4\ZZ$. Then
    $\Irr(\cD_i)=\{(i,\tu),(i,\tau)\}$ for $i=1,3$ and
    $\Irr(\cD_2)=\{(2,\tu\tu),(2,\tu\tau),(2,\tau\tu),(2,\tau\tau)\}$ and
  \begin{align*}
        (i,\tu)(i+2,\tu)
        &=
        \tu\tu\tu\tu+\tu\tu\tau\tau+\cdots+
        \tu\tau\tau\tau+\cdots + 2\tau\tau\tau\tau\\
        (i,\tu)(i+2,\tau)&
        =\tu\tu\tu\tau+\cdots+
        \tu\tu\tau\tau+\cdots+ 
        2\tu\tau\tau\tau+\cdots+
        3\tau\tau\tau\tau
        \\
        (i,\tau)(i+2,\tau)&
        =\tu\tu\tu\tu+ \tu\tu\tu\tau+\cdots+
        2\tu\tu\tau\tau+\cdots+ 
        3\tu\tau\tau\tau+\cdots+
        5\tau\tau\tau\tau
        \\
        (i,\tu)(i,\tu) &=
        2 (2,\tu\tu)
        +(2,\tu\tau)+(2,\tau\tu) 
        + 3(2,\tau\tau)
        \\
        (i,\tu)(i,\tau) &=
         3(2,\tu\tu)
        +3(2,\tu\tau)+(2,\tau\tu) 
        + 4(2,\tau\tau)
        \\
        (i,\tau)(i,\tau) 
        &=         
        3(2,\tu\tu)+4(2,\tu\tau)+4(2,\tau\tu)+7(2,\tau\tau)
        \\
        \tu\tu\tu\tau(i,\tu)&=(i,\tau)\\
        \tu\tu\tau\tau(i,\tu)&=(i,\tu)+(i,\tau)\\
        \tu\tau\tau\tau(i,\tu)&=(i,\tu)+2(i,\tau)\\
        \tau\tau\tau\tau(i,\tu)&=2(i,\tu)+3(i,\tau)
        \\
        \tu\tu\tu\tau(i,\tau)&=(i,\tu)+(i,\tau)\\
        \tu\tu\tau\tau(i,\tau)&=(i,\tu)+2(i,\tau)\\
        \tu\tau\tau\tau(i,\tau)&=2(i,\tu)+3(i,\tau)\\
        \tau\tau\tau\tau(i,\tau)&=3(i,\tu)+5(i,\tau)
        \\
        \tu\tu\tu\tau(2,\tu\tu)&=(2,\tu\tau)\\
        \tu\tu\tau\tau(2,\tu\tu)&=(2,\tau\tau)\\
        \tu\tau\tu\tau(2,\tu\tu)&=(2,\tu\tu)+(2,\tu\tau)\\
        \tu\tau\tau\tau(2,\tu\tu)&=(2,\tau\tu)+(2,\tau\tau)\\
        \tau\tau\tau\tau(2,\tu\tu)&=(2,\tu\tu)+(2,\tu\tau)+(2,\tau\tu)+(2,\tau\tau)
        \\
        \tu\tu\tu\tau(2,\tu\tau)&=(2,\tu\tu)+(2,\tu\tau)\\
        \tu\tu\tau\tu(2,\tu\tau)&=(2,\tau\tu)+(2,\tau\tau)\\
        \tu\tu\tau\tau(2,\tu\tau)&=(2,\tau\tu)+(2,\tau\tau)\\
        \tu\tau\tu\tau(2,\tu\tau)&=(2,\tu\tu)+2(2,\tu\tau)\\
        \tu\tau\tau\tau(2,\tu\tau)&=(2,\tau\tu)+2(2,\tau\tau)\\
        \tau\tau\tau\tau(2,\tu\tau)&=(2,\tu\tu)+2(2,\tu\tau)+(2,\tau\tu)+2(2,\tau\tau)
       \\
        \tu\tu\tu\tau(2,\tau\tau)&=(2,\tau\tu)+(2,\tau\tau)\\
        \tu\tu\tau\tau(2,\tau\tau)&=(2,\tu\tu)+(2,\tu\tau)+(2,\tau\tu)+(2,\tau\tau)\\
        \tu\tau\tu\tau(2,\tau\tau)&=(2,\tau\tu)+2(2,\tau\tau)\\
        \tu\tau\tau\tau(2,\tau\tau)&=(2,\tu\tu)+2(2,\tu\tau)+(2,\tau\tu)+2(2,\tau\tau)\\
        \tau\tau\tau\tau(2,\tau\tau)&=(2,\tu\tu)+2(2,\tu\tau)+2(2,\tau\tu)+4(2,\tau\tau)
        \end{align*}
    \begin{align*}    
        (i,\tu)(2,\tu\tu)&=2(i+2,\tu)+(i+2,\tau)\\
        (i,\tu)(2,\tu\tau)&=(i+2,\tu)+3(i+2,\tau)\\
        (i,\tu)(2,\tau\tau)&=3(i+2,\tu)+4(i+2,\tau)\\
        (i,\tau)(2,\tu\tu)&=(i+2,\tu)+3(i+2,\tau)\\
        (i,\tau)(2,\tu\tau)&=3(i+2,\tau)+4(i+2,\tau)\\
        (i,\tau)(2,\tau\tau)&=4(i+2,\tau)+7(i+2,\tau)\\
        (2,\tu\tu)(2,\tu\tu)&=\tu\tu\tu\tu+ \tau\tu\tau\tu+\tu\tau\tu\tau+\tau\tau\tau\tau\\
        (2,\tu\tu)(2,\tu\tau)&=\tu\tu\tu\tau+\tu\tau\tu\tu+\tu\tau\tu\tau+
        \tau\tau\tau\tu +\tau\tu\tau\tau+\tau\tau\tau\tau
        \\
        (2,\tu\tu)(2,\tau\tau)&=\tu\tu\tu\tu+
        \tu\tu\tu\tau+\cdots+\tu\tu\tau\tau+\cdots+
        \tu\tau\tau\tau+\cdots+\tau\tau\tau\tau\\
        (2,\tu\tau)(2,\tu\tau)&=\tu\tu\tu\tu+
        \tu\tu\tu\tau+\tu\tau\tu\tu+
        2\tu\tau\tu\tau+\tau\tu\tau\tu+\tau\tu\tau\tau+\tau\tau\tau\tu + 2\tau\tau\tau\tau\\
        (2,\tu\tau)(2,\tau\tau)&=
        \tu\tu\tau\tu+\tau\tu\tu\tu+\tu\tu\tau\tau+\tu\tau\tau\tu +\tau\tu\tu\tau
        +\tau\tu\tau\tu+\tau\tau\tu\tu+
        \\&+ 2\tu\tau\tau\tau +2\tau\tau\tu\tau+
        \tau\tu\tau\tau+\tau\tau\tau\tu+2\tau\tau\tau\tau\\
        (2,\tau\tau)(2,\tau\tau)&=\tu\tu\tu\tu
        +\tu\tu\tu\tau+\cdots + \tu\tu\tau\tau+
        2\tu\tau\tu\tau+\cdots + 2\tu\tau\tau\tau+\cdots+
        4\tau\tau\tau\tau
    \end{align*}
    where we write ``$\cdots$'' for obvious permutation of objects.
    \begin{table}
    \begin{tabular}{l|rrrrrrrr}
         $n$&0&1&2&3&4&5&6&7  \\
         \hline
         $N_{\tau^n}$ &1&0&1&1&2&3&5&8\\
         ${}^1N_{\tau^n}$&2&1&3&4&7&11&18&29
    \end{tabular}
    \caption{The fusion coefficients 
    ${}^gN_{\tau^n}$ are given by the Fibonacci and Lucas numbers for $g=0,1$, respectively.}
    \end{table} 

\end{section}

\end{appendix}

\end{document}